\newtheorem{thm}{Theorem}[section]
\newtheorem{prop}[thm]{Proposition}
\newtheorem{lemma}[thm]{Lemma}
\newtheorem{defn}[thm]{Definition}
\newtheorem{example}[thm]{Example}
\newtheorem{remark}[thm]{Remark}
\newtheorem{qus}[thm]{Question}%[section]
 \numberwithin{equation}{section}
\newcommand{\dbb}[1]{[\![#1]\!]}
\newcommand{\bbC}{\mathbb C}
\newcommand{\bbP}{\mathbb P}
\newcommand{\bbZ}{\mathbb Z}
\begin{document}

\title{Quantum Schubert calculus for smooth Schubert divisors of $F\ell_n$ }

\author{Changzheng Li}
 \address{School of Mathematics, Sun Yat-sen University, Guangzhou 510275, P.R. China}
\email{lichangzh@mail.sysu.edu.cn}

\author{Jiayu Song}
 \address{School of Mathematics, Sun Yat-sen University, Guangzhou 510275, P.R. China}
\email{songjy29@mail2.sysu.edu.cn}

\author{Rui Xiong}
\address{Department of Mathematics and Statistics, University of Ottawa, 150 Louis-Pasteur, Ottawa, ON, K1N 6N5, Canada}
\email{rxion043@uottawa.ca}

\author{Mingzhi Yang}
 \address{Department of Mathematics, The University of Hong Kong, Hong Kong, P.R. China}
\email{yangmzh@hku.hk}

\thanks{}

\maketitle

\begin{abstract}
  We propose to study the quantum Schubert calculus for Schubert varieties, and investigate the smooth Schubert divisors   $X$ of the complete flag variety $F\ell_n$. We provide a Borel-type ring presentation of the  quantum cohomology of    $X$. We derive the quantum Monk-Chevalley formula for $X$ by   geometric arguments. We also show that the quantum Schubert polynomials for $X$ are  the same   as that for $F\ell_n$ introduced by Fomin, Gelfand and Postnikov.
\end{abstract}

%\tableofcontents
\section{Introduction}

In a forthcoming paper \cite{LRY},  the first and fourth authors, together with  Rietsch, initiate a Peterson program for all Schubert subvarieties $X_{w, P}$ of flag varieties $G/P$,
including   mirror symmetry for Schubert varieties as one remarkable ingredient.
They construct a uniform Landau-Ginzburg model mirror to $X_{w, P}$, generalizing the constructions in  \cite{Rie08, RiWi}, and conjecture that
its Jacobi ring is  isomorphic to the small quantum cohomology ring of $X_{w, P}$  whenever $X_{w, P}$ is smooth and  Fano. This strongly motivates   the study of   quantum cohomology for  smooth Schubert varieties. A natural starting point is provided by   smooth Schubert varieties in the complete flag variety
      $F\ell_n:=\{V_1 \leq  \cdots \leq V_{n-1}\leq \bbC^n\mid \dim V_i=i, \forall 1\leq i<n\}$, which  is the quotient of  $G=SL(n, \mathbb{C})$ by the Borel subgroup of  {upper triangular matrices} in $G$. Let $F_1$ be a fixed one-dimensional vector subspace of $\mathbb{C}^n$. In this paper, we focus on the Schubert divisor
      $$X=\{V_\bullet \in F\ell_{n}\mid F_1\leq V_{n-1}\},$$
and study it systematically   from the viewpoint of quantum Schubert calculus.
  Although   $X$ looks  rather special and very close to the flag variety $F\ell_n$, we emphasize that our main results for $X$   already suggest features of   what a more general theory for   smooth Fano Schubert varieties would look  like; see the discussion at the end of the introduction.

We briefly recall the framework   of Schubert calculus. Schubert problems, which count the number of   geometric objects  with  given geometric  constraints,
are fundamental to enumerative geometry. Here the central objects are flag varieties $G/P$ together with their Schubert subvarieties.
The \textit{classical Schubert calculus}, in modern language, is about the study of the integral cohomology ring $H^*(G/P, \mathbb{Z})$. The Schubert classes $\sigma^u$ of the Schubert varieties form an additive basis of $H^*(G/P, \mathbb{Z})$. A satisfactory description  of this  ring typically  consists of  the following three parts:
\begin{enumerate}
    \item A ring presentation of the form $H^*(G/P, \mathbb{Z})=\mathbb{Z}[\mathbf{x}]/I$.
    \item A (manifestly positive) formula of the Schubert constants $c_{u, v}^w$ in the cup product $\sigma^u\cup \sigma^v=\sum_{w}c_{u, v}^w \sigma^w$.
    \item A Schubert polynomial $\mathfrak{S}_u(\mathbf{x})\in \bbZ[\mathbf{x}]$ that represents the Schubert class $\sigma^u$ in the aforementioned ring presentation  $\bbZ[\mathbf{x}]/I$.
\end{enumerate}
We  refer to the very nice article \cite{BGP} and the  references therein for the progress of classical Schubert calculus with an emphasis on the case $G=SL(n, \mathbb{C})$.
With   Gromov-Witten theory  introduced in the 1990s, the classical cohomology  $H^*(G/P, \mathbb{Z})$ can be deformed to the integral (small) quantum cohomology ring $QH^*(G/P, \bbZ)=(H^*(G/P, \bbZ)\otimes \mathbb{Z}[\mathbf{q}], \star)$, by incorporating  3-pointed, genus-0 Gromov-Witten invariants. In particular, we can write $\sigma^u\star \sigma^v=\sum_{w, \mathbf{d}}N_{u,v}^{w, \mathbf{d}}\sigma^w \mathbf{q}^{\mathbf{d}}$, where $N_{u,v}^{w, \mathbf{0}}=c_{u, v}^w$. There have been extensive studies of the \textit{quantum Schubert calculus}, namely of the quantum versions of the above (1)-(3) for
$QH^*(G/P, \bbZ)$ (see e.g. the survey \cite{LeLi} and the references therein).

All the Schubert varieties, including flag varieties as special cases, have CW complex structures by Schubert cells. In return, the integral cohomology of a Schubert variety $X_w$ inside $G/P$ is torsion free, and has an additive basis of Schubert classes indexed by the Weyl group elements $u$ satisfying $u\leq w$ with respect to the Bruhat order.

\begin{qus}
   What is  the (extended)  Schubert calculus for Schubert varieties?
\end{qus}

The natural inclusion $\iota: X_w\hookrightarrow G/P$ induces a surjective ring homomorphism $\iota^*: H^*(G/P, \mathbb{Z})\to H^*(X_w, \mathbb{Z})$ with kernel $I_w=\langle\sigma^u: u\not\leq w\rangle$. Hence, the (extended) classical Schubert calculus for Schubert varieties is  trivial in the sense that all points (1)-(3) can be reduced to that for the flag varieties, at the price that the ring presentation $H^*(X_w, \bbZ)=H^*(G/P, \bbZ)/I_w$ is not good enough.  We refer to \cite{ALP, GaRe, DMR, RWY, DiYo} for the study of the ring presentation of the cohomology of Schubert varieties.

The (extended) quantum Schubert calculus for Schubert varieties is highly nontrivial. First of all, we have to restrict to the smooth ones, since there is no Gromov-Witten theory   for singular (Schubert) varieties yet. To our knowledge, there have been very few pioneering studies \cite{Pec, MiSh, HKLS} in different contexts. The  odd symplectic Grassmannian $IG(k, 2n+1)$ is  a smooth Schubert variety of the symplectic Grassmannian $IG(k, 2n+2)$, a  flag variety $G/P$ with $G=Sp(2n, \mathbb{C})$. In \cite{Pec}, Pech studied the case $k=2$, which happens to be a general hyperplane section of the complex Grassmannian $Gr(2, 2n+1)$.
She did a relatively complete quantum Schubert calculus for a non-homogeneous Schubert variety for the first time,  by providing a ring presentation, the quantum Pieri formula (a partial formula for the quantum version of point (2), see also \cite{GLLX}), and the quantum Giambelli formula (i.e. the quantum version of (3)).
In \cite{MiSh}, Mihalcea and Shifler provided the (equivariant) quantum Chevalley formula for $IG(k, 2n+1)$ by using the curve neighborhood technique \cite{BuMi}.
In \cite{HKLS}, Hu, Ke, Li and Song provided a ring presentation for the quantum cohomology of the blowup of $Gr(2, n)$ along $Gr(2, n-1)$ for the purpose of studying mirror symmetry, which happens to be a Schubert divisor in a two-step flag variety. The special case when $n=3$ is the blowup of $\bbP^2$ at a point, which has been well studied much earlier. Despite being a very natural extension from the viewpoint of Schubert calculus, the quantum Schubert calculus for smooth non-homogeneous varieties is still largely uncharted territory, with many aspects awaiting exploration.

 Let $F_\bullet$ denote the standard complete flag. Each permutation $w\in S_n$ labels a Schubert variety $X_w$ of dimension $\ell(w)$ defined by rank conditions of the form  $X_w=\{V_\bullet\mid \dim (V_i\cap F_j)\geq m(i, j, w), \forall i, j\}$.  Note that the permutation $w_0=n\cdots 21$ in one-line notation is the longest element in $S_n$, and $s_i:=(i, i+1)$, $i<n$, denote  the simple transpositions.  Note     $X=X_{w_0s_{n-1}} \cong X_{w_0s_1}$, while all the other Schubert divisors $X_{w_0s_i}$, $2\leq i\leq n-2$, are singular.

Denote the following $n\times n$ matrices
\begin{align*}
     \begin{pmatrix}
    x_1 &  q_1 & \\
     -1 & x_2 & q_2  \\
          & \ddots    &\ddots & \ddots\\
              &    &-1 &  x_{n-2}   & q_{n-2} &    \\
      &    &   & -1   &x_{n-1} & q_{n-1} \\
      &    &      &   & -1 & x_{n}
  \end{pmatrix}\qquad
      \begin{pmatrix}
     x_1 &  q_1 & \\
     -1 &  x_2 & q_2  \\
         & \ddots    &\ddots & \ddots  \\
        &    &-1 &  x_{n-2}   & q_{n-2} &  -q_{n-1}q_{n-2} \\
         &   &   &-1   & x_{n-1} & -q_{n-1} x_{n-1} \\
          &      &   &  &-1 &  x_{n}
  \end{pmatrix}
\end{align*}
by   $M_{F\ell_n}$ and $M_{X_{w_0s_{n-1}}}$ respectively. Write
 \begin{equation}\label{coefEE}
     \det(I_n+\lambda M_{F\ell_n})=\sum_{i=0}^n E_i^n\lambda^i,\qquad \quad\det(I_n+\lambda M_{X_{w_0s_{n-1}}})=\sum_{i=0}^n \hat E_i^n\lambda^i.
 \end{equation}
The coefficients $E_i^n$, $\hat E^n_i$ may be viewed as quantizations of the $i$-th elementary symmetric polynomial $e^n_i(x_1, \cdots, x_n)$. Moreover, we notice $\hat E^n_1=E^n_1$ and $\hat E^{n}_n=(x_n-q_{n-1})E^{n-1}_{n-1}$, while the difference  between $\hat E^n_i$ and $E^n_i$ is a bit involved for $1< i<n$. As shown in \cite{GiKi, CF, Kim}, there is a canonical ring isomorphism
\begin{equation*}
   \Phi_q: QH^*(F\ell_n, \bbZ)\longrightarrow \mathbb{Z}[x_1, \cdots, x_n, q_1, \cdots, q_{n-1}]/(E_1^n, \cdots, E_n^n).
\end{equation*}
Its classical limit at $\mathbf{q}=\mathbf{0}$ gives Borel's ring isomorphism
 $\Phi: H^*(F\ell_n, \bbZ)\stackrel{\sim}\rightarrow {\mathbb{Z}[x_1, \cdots, x_n]\over (e_1^n, \cdots, e_n^n)}$ \cite{Borel}, where $e_i^n=e_i^n(x)$ denotes the $i$-th elementary symmetric polynomial in variables $x_1, \cdots, x_n$.
Our first main result is a similar quantum ring presentation for $X$.
\begin{thm}[Borel-type ring presentation]\label{thm: QHX}
    There is a canonical ring isomorphism    \begin{equation*}\label{QHXpresentation}
      \Psi_q:  QH^*(X, \bbZ)\longrightarrow\mathbb{Z}[x_1, \cdots, x_n, q_1, \cdots, q_{n-1}]\left/\big(\hat E_1^n, \cdots, \hat E^n_{n-1}, E_{n-1}^{n-1}\big).\right.
    \end{equation*}
\end{thm}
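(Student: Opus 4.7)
The plan is to follow the standard Siebert--Tian/Givental--Kim strategy for quantum Borel presentations. Denote the target ring by $R_q$, and its classical limit by $\bar R := R_q/(\mathbf{q})$. At $\mathbf{q}=\mathbf{0}$ the matrix $M_{X_{w_0s_{n-1}}}$ coincides with $M_{F\ell_n}$, so $\hat E_i^n|_{\mathbf{q}=\mathbf{0}} = e_i^n(x_1, \dots, x_n)$, giving
$$
\bar R = \bbZ[x_1, \dots, x_n]/\bigl(e_1^n, \dots, e_{n-1}^n, e_{n-1}^{n-1}(x_1, \dots, x_{n-1})\bigr).
$$
My first step is to identify $\bar R$ with $H^*(X, \bbZ)$. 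Since $X$ is a smooth Schubert divisor in $F\ell_n$, the pullback $\iota^*: H^*(F\ell_n, \bbZ) \to H^*(X, \bbZ)$ is surjective, and I would identify its kernel (inside Borel's classical presentation) with the principal ideal generated by $e_{n-1}^{n-1}(x_1, \dots, x_{n-1})$. The cleanest route is to interpret this polynomial as the top Chern class of the tautological quotient $V_{n-1}/F_1$ over $F\ell_n$, whose zero locus is exactly $X$; as $X$ is smooth of the expected codimension, a Koszul-type argument gives the presentation. A rank count then matches $\operatorname{rank}_{\bbZ} \bar R = n! - (n-1)!$ with the number of permutations $u \leq w_0 s_{n-1}$, equivalently $u(n) \neq 1$.

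Next, I would define $\Psi_q$ by sending $x_i$ to the pullback $\iota^*(x_i^{F\ell_n}) \in QH^*(X, \bbZ)$ and $q_i$ to the Novikov variable, and then verify that each $\hat E_i^n$ and $E_{n-1}^{n-1}$ vanishes under this substitution in the quantum product. The relation $E_{n-1}^{n-1} = 0$ should descend from the Givental--Kim relations for $QH^*(F\ell_{n-1}, \bbZ)$ via the natural projection $X \to F\ell(\bbC^n/F_1) \cong F\ell_{n-1}$, carrying an identifiable fibered quantum structure. For the relations $\hat E_i^n = 0$, the correction entries $-q_{n-1}q_{n-2}$ and $-q_{n-1}x_{n-1}$ in the last column of $M_{X_{w_0s_{n-1}}}$ are tailored to absorb the quantum contributions from rational curves on $X$ whose degrees differ from those detected inside $F\ell_n$; matching them explicitly against iterated products with the divisor classes via the quantum Chevalley formula for $X$ (derived by geometric arguments elsewhere in the paper) is the core step.

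The final step is a graded Nakayama argument. Both $R_q$ and $QH^*(X, \bbZ)$ are graded $\bbZ[\mathbf{q}]$-modules with $\deg q_i > 0$ and finitely generated in each fixed total degree. The $\mathbf{q}=\mathbf{0}$ specialization of $\Psi_q$ is the classical isomorphism of Step 1, so surjectivity of $\Psi_q$ lifts from surjectivity at $\mathbf{q}=\mathbf{0}$. Since $QH^*(X, \bbZ)$ is free of rank $n! - (n-1)!$ over $\bbZ[\mathbf{q}]$, and $R_q$ has the same rank in each fixed total degree by the same Nakayama argument, injectivity follows degree-by-degree.

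The main obstacle will be verifying $\hat E_i^n = 0$ in $QH^*(X, \bbZ)$ for $1 < i < n$. The matrix $M_{X_{w_0s_{n-1}}}$ differs from $M_{F\ell_n}$ in only a $2 \times 2$ corner, yet these perturbations propagate non-trivially through every coefficient of the characteristic polynomial. The most promising approach is to expand $\det(I_n + \lambda M_{X_{w_0s_{n-1}}})$ along the last two rows, separate the $F\ell_n$-like contribution from the correction, and show inductively that the correction precisely matches the difference between quantum products on $X$ and on $F\ell_n$ predicted by the quantum Chevalley formula. Carrying out this cancellation cleanly---without grinding through a massive combinatorial identity---will require exploiting the determinantal structure of $M_{X_{w_0s_{n-1}}}$ in an essential way, and this is where I expect the bulk of the technical effort to lie.
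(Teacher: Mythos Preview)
Your Siebert--Tian scaffolding (verify that the candidate relations hold in $QH^*(X,\bbZ)$, match the classical limit with $H^*(X,\bbZ)$, then invoke a Nakayama-type argument as in \cite[Proposition~11]{FuPa}) is exactly the frame the paper uses. The substantive divergence is in \emph{how} the relations $\hat E_i^n=0$ and $E_{n-1}^{n-1}=0$ are established.

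The paper never touches the quantum Chevalley formula here. Instead it exploits that $X$ is the zero locus of a section of the line bundle $\mathcal{L}_{\varpi_{n-1}}$ on $F\ell_n$ and applies the quantum Lefschetz theorem: $J^X$ is an explicit hypergeometric modification of $J^{F\ell_n}$ (\cref{prop:JfunX}). The Givental--Kim differential operators $D_i^n$ annihilating $J^{F\ell_n}$ are then transported, via two auxiliary operators $\mathbb{T},\mathbb{S}$ and the commutation identities of \cref{lemma1}, into operators annihilating $J^X$; setting $\hbar=0$ (\cref{QDE}) produces relations $\chi_i=0$ in $QH^*(X)$ for a \emph{conjugate} matrix $\tilde M_X$, and a one-line conjugation $M_{X}=A\tilde M_X A^{-1}$ converts these into $\hat E_i^n=0$. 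The relation $E_{n-1}^{n-1}=0$ drops out of the same machinery from the factorization $D_n^n=(\hbar\partial_{t_n})D_{n-1}^{n-1}+D_{n-2}^{n-2}q_{n-1}$, not from any fibration.

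Your proposed route has a concrete gap: the ``natural projection $X\to F\ell(\bbC^n/F_1)$'' does not exist. For $V_\bullet\in X$ only $V_{n-1}$ is forced to contain $F_1$; the smaller $V_i$ need not, so there is no induced flag in $\bbC^n/F_1$. What exists is the $F\ell_{n-1}$-bundle $X\to\bbP(\bbC^n/F_1)\cong\bbP^{n-2}$, $V_\bullet\mapsto V_{n-1}/F_1$, with fibre $F\ell(V_{n-1})$; extracting $E_{n-1}^{n-1}=0$ from that would require a quantum Leray--Hirsch argument, which is not automatic. Similarly, your reading of $e_{n-1}^{n-1}$ as ``the top Chern class of $V_{n-1}/F_1$ over $F\ell_n$ whose zero locus is $X$'' cannot be right: $V_{n-1}/F_1$ is not a bundle on $F\ell_n$, and $X$ has codimension~$1$, not $n-1$ (just cite \cite{GaRe,RWY} for the classical presentation). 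Finally, verifying $\hat E_i^n=0$ by iterated quantum Chevalley is logically independent and in principle feasible, but would be a substantial combinatorial grind; the paper's $J$-function route sidesteps all of it.
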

\noindent In particular, we obtain the canonical ring isomorphism
$\Psi: H^*(X, \bbZ)\stackrel{\sim}\to {\mathbb{Z}[x_1, \cdots, x_n]\over (e^{n}_1, \cdots, e^n_{n-1}, e^{n-1}_{n-1})}$ obtained in \cite{GaRe, RWY} by taking the classical limit at $\mathbf{q}=\mathbf{0}$. By ``canonical" above, we mean that $x_i$ represents the first Chern class of a specific tautological line bundle (see \cref{xiFl}  for $F\ell_n$ and  \cref{xiX} for  $X$).

For any $u\in S_n$, the Schubert class $\sigma^u$ in $H^{2\ell(u)}(F\ell_n, \bbZ)$ is given by the Poincar\'e dual of the homology class $[X_{w_0u}]$ in $H_{2\ell(w_0)-2\ell(u)}(F\ell_n, \bbZ)$.
The pullback Schubert classes $\{\xi^{u}:=\iota^*(\sigma^u)\}_{u\leq w_0s_{n-1}}$ form an additive basis of $H^*(X, \bbZ)$, and the divisor classes $\xi^{s_i}$'s generate    $QH^*(X, \bbZ)$ as a $\mathbb{Z}[\mathbf{q}]$-algebra.
Whenever referring to a transposition $t_{ij}=(i, j)$, we always assume $i<j$.
We say $u\lessdot_k ut_{ij}$ (resp. $u\lessdot_k^q ut_{ij}$) in the (quantum) $k$-Bruhat order, if both $i\leq k<j$ and $\ell(ut_{ij})=\ell(u)+1$ (resp.
$\ell(ut_{ij})=\ell(u)-\ell(t_{ij})$) hold.
Our second main result is the following quantum Monk-Chevalley formula,
in analogy with the quantum Monk's formula for $QH^*(F\ell_n, \bbZ)$ \cite{FGP} or more generally the quantum Chevalley formula for $QH^*(G/P, \mathbb{Z})$ \cite{FuWo}. It is
a special case of the quantum version of point (2) for $X$ but it determines the entire quantum product because the divisor  classes generate $QH^*(X, \mathbb{Z})$.
\begin{thm}[Quantum Monk-Chevalley formula]\label{thm: QCF}
    Let $1\leq k\leq n-1$ and $u\in S_n$ with $u\leq w_0s_{n-1}$. In $QH^*(X, \bbZ)$, we have
\begin{equation*}
    \xi^{s_k}\star \xi^u=\sum\xi^{ut_{ab}}+    \sum \xi^{ut_{ab}}q_{a}\cdots q_{b-1}+   \sum \xi^{w}q_{a}\cdots q_{n-1}-\delta_{k, n-1} \xi^uq_{n-1},
\end{equation*}
 where  the first sum is  over
$u\lessdot_k ut_{ab}\leq w_0s_{n-1}$,
the second sum  is over
$u\lessdot_k^q ut_{ab}$ with $b<n$,
and   the third sum is over $(w, a)$  that satisfies $wt_{an}\lessdot_k^q w$ and $u\lessdot_{n-1}  wt_{an}\in S_n$.
\end{thm}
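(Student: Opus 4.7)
The plan is to compute the 3-point Gromov--Witten invariants $\langle \xi^{s_k},\xi^u,\xi^{v^\vee}\rangle_{\mathbf{d}}^X$ that give the coefficients in $\xi^{s_k}\star \xi^u$, splitting the analysis by degree. For $\mathbf{d}=\mathbf{0}$, functoriality of cup product under $\iota\colon X\hookrightarrow F\ell_n$ gives $\xi^{s_k}\cup \xi^u=\iota^*(\sigma^{s_k}\cup\sigma^u)$; applying $\iota^*$ (which sends $\sigma^w$ to $\xi^w$ when $w\leq w_0s_{n-1}$ and to zero otherwise) to the classical Monk formula on $F\ell_n$ directly produces the first sum of the theorem, with the restriction $ut_{ab}\leq w_0s_{n-1}$ coming automatically from the kernel of $\iota^*$.

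For $\mathbf{d}\neq\mathbf{0}$, I would invoke the divisor axiom
\[
\langle \xi^{s_k},\xi^u,\xi^{v^\vee}\rangle_{\mathbf{d}}^X=(\xi^{s_k}\cdot \mathbf{d})\,\langle \xi^u,\xi^{v^\vee}\rangle_{\mathbf{d}}^X=d_k\,\langle \xi^u,\xi^{v^\vee}\rangle_{\mathbf{d}}^X,
\]
reducing the problem to 2-point invariants on $X$; here $\xi^{s_k}\cdot \mathbf{d}=d_k$ because $H_2(X,\bbZ)\to H_2(F\ell_n,\bbZ)$ is an isomorphism (every Schubert curve $X_{s_i}$ lies in $X$). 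Since $[X]=\sigma^{s_{n-1}}\in H^2(F\ell_n,\bbZ)$, a rational curve in $F\ell_n$ lies in a translate of $X$ iff its generic point satisfies $F_1\leq V_{n-1}$. For degrees $\mathbf{d}$ with $d_{n-1}=0$, a dimension/incidence argument would show that a general degree-$\mathbf{d}$ curve through given translates of $X_u$ and $X_{v^\vee}$ (both lying in a common translate of $X$) remains inside that translate; the 2-point invariants on $X$ then match those on $F\ell_n$, and combined with the quantum Monk formula of Fomin--Gelfand--Postnikov this yields the second sum $\sum \xi^{ut_{ab}}q_a\cdots q_{b-1}$ with $b<n$.

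The main obstacle is the $q_{n-1}$-contributions: the third sum $\sum \xi^w q_a\cdots q_{n-1}$ and the correction $-\delta_{k,n-1}\xi^uq_{n-1}$. For $\mathbf{d}$ with $d_{n-1}\geq 1$, curves of degree $\mathbf{d}$ through translates of $X_u\cap X$ and $X_{v^\vee}\cap X$ can leave $X$, so the 2-point invariants genuinely differ from their $F\ell_n$ counterparts. I would analyze the relevant moduli of stable maps via a curve neighborhood argument in the spirit of Buch--Mihalcea, identifying the Schubert variety in $X$ swept out by the minimal-degree $q_a\cdots q_{n-1}$ curves meeting $X_u\cap X$; the intermediate permutation $wt_{an}$ captures a factorization of such a curve as a minimal $F\ell_n$-curve (giving $wt_{an}\lessdot_k^q w$) followed by a Bruhat correction landing it back in $X$ (giving $u\lessdot_{n-1}wt_{an}$), which accounts for the third sum. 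The correction $-\delta_{k,n-1}\xi^uq_{n-1}$ should emerge from the adjunction relation $c_1(T_X)=\iota^*c_1(T_{F\ell_n})-\xi^{s_{n-1}}$: in the minimal degree $(0,\ldots,0,1)$ contribution, the $-\xi^{s_{n-1}}$ shift in the virtual class produces an excess that must be subtracted from the naive enumerative count, and the divisor-axiom factor $d_{n-1}$ causes this discrepancy to manifest only when $k=n-1$.
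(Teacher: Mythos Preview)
Your broad architecture matches the paper's: reduce via the divisor axiom to two-point invariants on $X$, then split by the value of $d_{n-1}$ and compare with $F\ell_n$. The $d_{n-1}=0$ case is handled exactly as you say (any such curve with a marked point in $X$ projects to a point of $H$ under $pr$, hence lies entirely in $X$; this is Proposition~\ref{lowdg00}). But two of your ingredients are off.

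First, you omit the case $d_{n-1}\geq 2$ entirely. The paper devotes a full subsection to showing these invariants vanish. Curve neighborhoods alone (Proposition~\ref{prop: d_vanish}) handle all degrees except $d=\alpha_{in}+\alpha_{n-1,n}$, where the length bound $\ell(z_d)\leq \langle d,c_1(T_X)\rangle-1$ is an equality; those special degrees require a separate dimension argument using the forgetful map $\pi:X\to F\ell_{1,\ldots,n-2;n}$ (Proposition~\ref{prop: d_specific}). Without this vanishing, you cannot conclude that only the terms in the statement appear.

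Second, your mechanism for the $-\delta_{k,n-1}\xi^u q_{n-1}$ correction is wrong. You attribute it to adjunction shifting the virtual class, but the paper proves (Lemma~\ref{no_obs}, and the proposition following it) that $\overline{\mathcal{M}}_{0,m}(X,d)$ is \emph{unobstructed} for $d_{n-1}\leq 1$, so $[\overline{\mathcal{M}}_{0,m}(X,d)]^{\rm vir}$ is the ordinary fundamental class and there is no excess. The correction instead comes from the geometry of the inclusion $\iota(\overline{\mathcal{M}}_{0,2}(X,d))\hookrightarrow ev_1^{-1}(X)\cap ev_2^{-1}(X)$ inside $\overline{\mathcal{M}}_{0,2}(F\ell_n,\iota_*d)$: for $d_{n-1}=1$ this intersection has, in addition to the closure of maps landing in $X$, a boundary locus $\mathcal{B}$ of nodal curves whose two marked points lie over the same point of $H$. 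Proposition~\ref{lowdg GW} computes the contribution of $\mathcal{B}$ and finds it nonzero only when $d=\alpha_{n-1,n}$, giving precisely the $-\langle\sigma^u,\sigma^w,\sigma^{s_{n-1}}\rangle_{0,3,0}$ term. This is also the source of the third sum: the main piece of $ev_1^{-1}(X)\cap ev_2^{-1}(X)$ yields $\langle\sigma^u\cup\sigma^{s_{n-1}},\,\cdot\,\rangle^{F\ell_n}_{0,2,\iota_*d}$, i.e.\ the $q_{n-1}$-terms of $\sigma^{s_k}\star(\sigma^u\cup\sigma^{s_{n-1}})$, which is exactly the condition $u\lessdot_{n-1} wt_{an}\lessdot_k^q w$. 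Your curve-neighborhood heuristic for this part does not produce the needed identity and would not isolate the nodal correction.
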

\noindent By \cref{lengthQCF}, the permutation $w$ in the third sum satisfies $w\leq w_0s_{n-1}$; the same is true of  $ut_{ab}$ in the second sum.

In the above expression of $\xi^{s_k}\star \xi^u$, the first two sums are obtained by truncating   the quantum product $\sigma^{s_k}\star \sigma^u$, and the third sum is about  the quantum terms involving $q_{n-1}$ and appearing in $\sigma^{s_k}\star (\sigma^u\cup \sigma^{s_{n-1}})$.
   We emphasize that the minus sign in the fourth part is necessary and has a geometric explanation, which gives rise to a key ingredient in our geometric arguments.
Intuitively, a smooth curve with two marked points of degree $d$ with $d_{n-1}=1$ lies on $X$ if and only if the two marked points are both on $X$.
For stable maps,    an additional  correction term must be taken into consideration, due to the presence  of nodal curves.
  The fourth part may or may not be  canceled by the third part.
\begin{example}
    For the Schubert divisor $X_{w_0s_3}$ of $F\ell_4$, we have
 \begin{align*}
     \xi^{s_3s_2}\star \xi^{s_{3}}&=\xi^{s_3s_2s_3}+ 0 +(q_3\xi^{s_3s_2}+ q_2q_3)-q_3\xi^{s_3s_2}= \xi^{s_3s_2s_3}+  q_2q_3;\\
     \xi^{s_1s_3}\star \xi^{s_{3}}&=\xi^{s_2s_1s_3}+ 0+ (q_3\xi^{s_1s_2}+q_3\xi^{s_2s_1})-q_3\xi^{s_1s_3}.
 \end{align*}
\end{example}
  \noindent To show the vanishing of Gromov-Witten invariants of higher degrees,  we need   a special treatment with involved geometric arguments for  degrees  of the form $(0, \cdots, 0, 1, \cdots, 1,2)$, in addition to the use of
 the standard  curve neighborhood technique developed by Buch and Mihalcea \cite{BuMi} for Gromov-Witten invariants of other higher degrees.
 \begin{remark}
 Our proof of Theorem \ref{thm: QCF} is purely geometric, and all morphisms involved   are torus-equivariant.
  Consequently, \cref{thm: QCFbody} extends equivariantly by simply interpreting  $N_{v, u}^{w, d}$ as equivariant quantum Schubert structure constants for $F\ell_n$. Therefore,   the equivariant quantum extension of   \cref{thm: QCF} is simply obtained by   adding   into it
   the single term $(\omega_k-u(\omega_k))\xi^u$ , where $\omega_i$'s denote the fundamental weights.

 We work only in  the non-equivariant setting in the main  text,    to  keep the presentation concise   and aligned with the predictions of the Peterson program \cite{LRY}.
\end{remark}

 The Schubert polynomials $\mathfrak{S}_w\in \mathbb{Z}[x_1, \cdots, x_n]$, $w\in S_n$, were introduced by Lascoux and Sch\"utzenberger \cite{LascouxSchu}, and satisfy  $\Phi(\sigma^w)=[\mathfrak{S}_w]$ in Borel's presentation of $H^*(F\ell_n, \bbZ)$.
Moreover, every $\mathfrak{S}_w$ admits a unique linear expansion $\mathfrak{S}_w=\sum \alpha_{i_1\ldots i_{n-1}}e^1_{i_1}e^2_{i_2}\cdots e^{n-1}_{i_{n-1}}$.
In \cite{FGP}, Fomin, Gelfand and Postnikov   introduced the quantum Schubert polynomial
\begin{equation*}
    \mathfrak{S}_w^q= \sum \alpha_{i_1\ldots i_{n-1}}E^1_{i_1}E^2_{i_2}\cdots E^{n-1}_{i_{n-1}}.
\end{equation*}
They showed that $\Phi_q(\sigma^w)=[\mathfrak{S}_w^q]$ under the aforementioned canonical ring isomorphism $\Phi_q$ for  $QH^*(F\ell_n, \bbZ)$.Our third main result is the following.
Recall the ring isomorphism $\Psi_q$ in \cref{thm: QHX}.
\begin{thm}[Quantum Schubert polynomials]\label{thm: QSP} For any $w\leq w_0s_{n-1}$, we have $$\Psi_q(\xi^w)=[\mathfrak{S}_w^q].$$
\end{thm}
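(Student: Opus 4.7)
The plan is to introduce the $\bbZ[\mathbf{q}]$-linear map
\[
  \Theta\colon QH^*(X, \bbZ) \longrightarrow \bbZ[x_1,\ldots,x_n,q_1,\ldots,q_{n-1}]\big/\big(\hat E_1^n, \ldots, \hat E_{n-1}^n, E_{n-1}^{n-1}\big), \qquad \xi^w \longmapsto [\mathfrak{S}_w^q],
\]
which is well-defined because $\{\xi^w : w \leq w_0 s_{n-1}\}$ is a $\bbZ[\mathbf{q}]$-basis of $QH^*(X, \bbZ)$, and to argue that $\Theta = \Psi_q$. First I would verify that $\Theta$ and $\Psi_q$ agree on each divisor generator $\xi^{s_k}$: the linear Schubert polynomial $\mathfrak{S}_{s_k}^q = x_1 + \cdots + x_k$ admits no quantum correction, and by the canonical Chern-class identification behind $\Psi_q$ (as described after \cref{thm: QHX}) the same expression represents $\Psi_q(\xi^{s_k})$. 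Since the divisor classes generate $QH^*(X, \bbZ)$ as a $\bbZ[\mathbf{q}]$-algebra and $\Psi_q$ is already a ring isomorphism by \cref{thm: QHX}, the equality $\Theta = \Psi_q$ will follow once $\Theta$ is shown to be multiplicative on these generators, i.e.\ it suffices to check $\Theta(\xi^{s_k}\star\xi^u) = (x_1+\cdots+x_k)[\mathfrak{S}_u^q]$ in the target ring $R_X$ for every $u\leq w_0s_{n-1}$ and $1\leq k\leq n-1$.

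To verify this identity I would expand the left-hand side term-by-term using the quantum Monk--Chevalley formula (\cref{thm: QCF}) and apply $\Theta$, while on the right-hand side I would start from the Fomin--Gelfand--Postnikov quantum Monk polynomial identity in $\bbZ[\mathbf{x},\mathbf{q}]$,
\[
  (x_1+\cdots+x_k)\,\mathfrak{S}_u^q \;=\; \sum_{u\lessdot_k ut_{ab}} \mathfrak{S}_{ut_{ab}}^q \;+\; \sum_{u\lessdot_k^q ut_{ab}} \mathfrak{S}_{ut_{ab}}^q\, q_a\cdots q_{b-1} \;+\; P
\]
with $P \in (E_1^n,\ldots,E_n^n)$, and then reduce modulo $I_X := (\hat E_1^n,\ldots,\hat E_{n-1}^n, E_{n-1}^{n-1})$. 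The FGP terms with $ut_{ab}\leq w_0s_{n-1}$ match term-for-term the first two sums of \cref{thm: QCF}; what remains to reconcile is a sum of three types of contributions: the FGP first-sum boundary terms $\mathfrak{S}_{ut_{an}}^q$ with $(ut_{an})(n)=1$ (which occur exactly when $b=n$ and $a=u^{-1}(1)$), the FGP second-sum terms with $b=n$, and the residue of $P$ modulo $I_X$ (which is genuine since $E_n^n \notin I_X$ in general). The goal is to show that the sum of these three contributions reduces in $R_X$ to precisely the third sum $\sum [\mathfrak{S}_w^q] q_a\cdots q_{n-1}$ together with the minus term $-\delta_{k,n-1}[\mathfrak{S}_u^q]q_{n-1}$ appearing in \cref{thm: QCF}.

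The main obstacle is controlling the image of $P$ modulo $I_X$, since the ideals $(E_1^n,\ldots,E_n^n)$ and $I_X$ are not directly comparable. The way in is the explicit identity $\hat E_n^n = (x_n - q_{n-1})E_{n-1}^{n-1}$ noted in the paper together with the observation that the matrices $M_{F\ell_n}$ and $M_{X_{w_0s_{n-1}}}$ differ only in their last column, so each difference $E_i^n - \hat E_i^n$ is a $q_{n-1}$-multiple of a polynomial with a transparent Laplace-expansion description. Using these recursions one can rewrite each $[E_i^n]$ in $R_X$ as a $q_{n-1}$-combination of images of lower-degree quantum Schubert polynomials, and thereby identify the residue of $P$ together with the $b=n$ boundary contributions as exactly the third sum and minus term predicted by \cref{thm: QCF}. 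The worked examples following \cref{thm: QCF} (for instance the cancellation $-q_3\xi^{s_3s_2}$ leaving only $q_2q_3$ in the $n=4$ case) already exhibit this mechanism in miniature, and the general bookkeeping should be organized by the value of $a = u^{-1}(1)$ and the cover structure of the $k$-Bruhat order.
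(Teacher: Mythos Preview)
Your framework is sound in principle: defining $\Theta$ linearly and reducing to the single identity $\Theta(\xi^{s_k}\star\xi^u)=(x_1+\cdots+x_k)[\mathfrak{S}_u^q]$ in $R_X$ is a legitimate strategy. However, two points need correcting, and the second is a genuine obstruction.

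First, a minor issue: there is no remainder $P$. The Fomin--Gelfand--Postnikov quantum Monk identity (see \cref{prop: quantumMonkFl}) is an exact polynomial identity in $\bbZ[x,q]$, not merely a congruence modulo $(E_1^n,\ldots,E_n^n)$. So the ``residue of $P$ modulo $I_X$'' you plan to analyze is simply zero, and the discrepancy you must control is exactly the boundary terms: the FGP classical covers $ut_{ab}$ with $(ut_{ab})(n)=1$, and the FGP quantum terms with $b=n$.

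The real gap is in handling those boundary terms. After cancelling common summands, you are left to prove an identity in $R_X$ of the form
\[
\sum_{\substack{u\lessdot_k ut_{an}\\(ut_{an})(n)=1}}[\mathfrak{S}_{ut_{an}}^q]
+\sum_{u\lessdot_k^q ut_{an}}[\mathfrak{S}_{ut_{an}}^q]\,q_{an}
\;=\;\sum_{(w,a)}[\mathfrak{S}_w^q]\,q_{an}-\delta_{k,n-1}[\mathfrak{S}_u^q]\,q_{n-1}.
\]
The left side involves classes $[\mathfrak{S}_v^q]$ for permutations $v$ with $v(n)=1$, i.e.\ $v\not\leq w_0s_{n-1}$. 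You have no independent description of these elements in $R_X$: the $\{[\mathfrak{S}_w^q]:w\leq w_0s_{n-1}\}$ are not yet known to be a basis (that is precisely the theorem), and the relations $\hat E_i^n-E_i^n$ you propose to exploit are statements about the $E$-polynomials, not about individual $\mathfrak{S}_v^q$'s. Expanding $\mathfrak{S}_v^q$ in the $E$-basis and reducing each $E_i^n$ modulo $I_X$ would require tracking how an arbitrary product $E^1_{i_1}\cdots E^{n-1}_{i_{n-1}}$ rewrites in $R_X$, which is no easier than the theorem itself. So the plan, as stated, is circular.

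The paper avoids this entirely by replacing the full Monk product $\xi^{s_k}\star\xi^u$ with the \emph{difference} $x_i\star\xi^u=(\xi^{s_i}-\xi^{s_{i-1}})\star\xi^u$, where $i$ is specifically chosen to be the \emph{last descent} of $w$ and $u=wt_{ij}$ (the transition equation, \cref{InductiveQSchu}). With this particular choice, a short combinatorial argument (\cref{InductiveCor}) shows that every permutation appearing on the right-hand side already satisfies $v(n)\neq 1$; the problematic boundary terms are simply absent. One then checks (\cref{GeometricTransitionformula}) that the $\xi^w$'s obey the identical recursion in $QH^*(X)$, and concludes by induction on a total order refining length. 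The key insight you are missing is that a well-chosen \emph{partial} Monk identity stays inside $\{w:w\leq w_0s_{n-1}\}$, whereas the full one does not.
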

\noindent  The key ingredient in our proof is a  careful application of Theorem \ref{thm: QCF}, combined with a transition equation  (in \cref{InductiveQSchu}), showing  that   quantum Schubert polynomials can be completely determined by a very small part of the quantum Monk-Chevalley formula. This principle  was noticed and applied early in \cite[Theorem 4 and Remark 3.15]{LOTRZ}  in the
study of a combinatorial model for quantum double Schubert polynomials for $F\ell_n$.

On the one hand,  even though the quantum (resp. classical) Schubert polynomial of the class $\xi^w$ for $X$ is  the same as the corresponding polynomial for the class $\sigma^w$ for $F\ell_n$, the  quantum cohomology ring structures differ substantially.   As    \cref{thm: QCF} shows,  the expansion of $\mathfrak{S}_{s_i}^q \mathfrak{S}_u^q$ for $X$ is not obtained by simply truncating the formula  for $F\ell_n$. Thus the naive linear pullback $\sigma^u\mapsto \iota^*(\sigma^u) = \xi^u$ does not extend to a ring homomorphism $QH^*(F\ell_n, \bbZ)\to QH^*(X, \bbZ)$.

 On the other hand, the following modified pullback is  a ring homomorphism.
\begin{thm}[Quantum Lefschetz hyperplane principle]\label{thm: ringhom}
   There is a ring homomorphism $\iota^*_q: QH^*(F\ell_n, \bbZ)\rightarrow QH^*(X, \bbZ)$, defined by $\sigma^{s_{i}}\mapsto \xi^{s_i}$ and $q_i\mapsto q_i$ for $1\leq i\leq n-2$, and
   $$\sigma^{s_{n-1}}\mapsto \xi^{s_{n-1}}+q_{n-1},\qquad q_{n-1}\mapsto q_{n-1}\xi^{s_{n-1}}+q_{n-1}^2. $$
\end{thm}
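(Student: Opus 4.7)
The plan is to realize $\iota^*_q$ at the level of the Borel-type polynomial presentations from \cref{thm: QHX} and verify that a prescribed lift descends to the quotient rings. Concretely, define a ring endomorphism $\tilde\phi$ of $\bbZ[x_1,\ldots,x_n,q_1,\ldots,q_{n-1}]$ by
\[
x_i \mapsto x_i \ (1\le i\le n-2),\qquad x_{n-1}\mapsto x_{n-1}+q_{n-1},\qquad x_n \mapsto x_n - q_{n-1},
\]
\[
q_i\mapsto q_i\ (1\le i\le n-2),\qquad q_{n-1}\mapsto q_{n-1}(q_{n-1}-x_n).
\]
Under the identifications $\Phi_q(\sigma^{s_i})=[x_1+\cdots+x_i]$ and $\Psi_q(\xi^{s_i})=[x_1+\cdots+x_i]$ (the latter from \cref{thm: QSP}), $\tilde\phi$ realizes on generators exactly the assignment of the theorem; in particular, the two natural representatives for $\iota^*_q(q_{n-1})$ differ by $-q_{n-1}\hat E_1^n$ and so agree in the target. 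It therefore suffices to show that $\tilde\phi$ sends $(E_1^n,\ldots,E_n^n)$ into $(\hat E_1^n,\ldots,\hat E_{n-1}^n, E_{n-1}^{n-1})$.

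The key algebraic fact is the polynomial identity $\tilde\phi(E_i^n)=\hat E_i^n$ for every $1\le i\le n$; equivalently, $\tilde\phi(M_{F\ell_n})$ and $M_{X_{w_0 s_{n-1}}}$ share the same characteristic polynomial. To verify this, let $D_k(\lambda)=\sum_i E_i^k\lambda^i$ denote the characteristic polynomial of the upper-left $k\times k$ block of $M_{F\ell_n}$, satisfying the tridiagonal recurrence $D_k=(1+\lambda x_k)D_{k-1}+\lambda^2 q_{k-1}D_{k-2}$. Since $\tilde\phi(M_{F\ell_n})$ is still tridiagonal and agrees with $M_{F\ell_n}$ on its upper-left $(n{-}2)\times(n{-}2)$ block, the recurrence first gives $\tilde\phi(D_{n-1})=D_{n-1}+\lambda q_{n-1} D_{n-2}$, and a further one-step expansion (after cancelling the $\lambda^2 q_{n-1} x_n D_{n-2}$ terms) yields
\[
\det\bigl(I_n+\lambda\tilde\phi(M_{F\ell_n})\bigr) \;=\; (1+\lambda x_n)D_{n-1} - \lambda q_{n-1}\bigl(D_{n-1}-D_{n-2}\bigr).
\]
Expanding $\det(I_n+\lambda M_{X_{w_0 s_{n-1}}})$ along the last row---the only modifications of $M_{F\ell_n}$ being the entries $-q_{n-1}q_{n-2}$ and $-q_{n-1}x_{n-1}$---gives $(1+\lambda x_n)D_{n-1}-\lambda^2 q_{n-1}x_{n-1}D_{n-2}-\lambda^3 q_{n-1}q_{n-2}D_{n-3}$, and the recurrence identity $D_{n-1}-D_{n-2}=\lambda x_{n-1}D_{n-2}+\lambda^2 q_{n-2}D_{n-3}$ identifies the two expressions, proving the claim.

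Granted this identity, the containment is immediate: for $1\le i\le n-1$, $\hat E_i^n$ is a generator of the target ideal, and for $i=n$ the observation $\hat E_n^n=(x_n-q_{n-1})E_{n-1}^{n-1}$ recorded in the introduction shows $\tilde\phi(E_n^n)$ is a multiple of $E_{n-1}^{n-1}$. Hence $\tilde\phi$ descends to a ring homomorphism between the Borel-type quotients, and pulling back through $\Phi_q^{-1}$ and $\Psi_q$ yields $\iota^*_q$ with the prescribed behavior on $\sigma^{s_i}$ and $q_i$. The main obstacle is the characteristic polynomial identity itself: a priori the two matrices $\tilde\phi(M_{F\ell_n})$ and $M_{X_{w_0 s_{n-1}}}$ look rather different, and the fact that their characteristic polynomials coincide exactly (rather than only modulo the target ideal) is what makes the formulas in \cref{thm: ringhom} land on the nose; the tridiagonal recurrence collapses the discrepancy to a single one-step identity, which is also what dictates the precise shape of the map.
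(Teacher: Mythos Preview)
Your proof is correct and follows the same overall strategy as the paper: define the map $\tilde\phi$ on the polynomial ring with exactly the assignments you wrote, check that it sends each $E_i^n$ to $\hat E_i^n$ (so the ideal is preserved), and transport back through $\Phi_q$ and $\Psi_q$. The only difference is in how the identity $\tilde\phi(E_i^n)=\hat E_i^n$ is verified: the paper observes that $\tilde\phi(M_{F\ell_n})$ is precisely the matrix $\tilde M_X$ of \cref{relationsQHX} and then uses the conjugation $M_X=A\tilde M_XA^{-1}$ with $A=I_n+q_{n-1}B_{n-1,n}$ (already established in the proof of \cref{thm: QHX}) to conclude the characteristic polynomials agree; you instead compute both determinants directly via the tridiagonal recurrence and match them. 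Two minor remarks: the fact $\Psi_q(\xi^{s_i})=[x_1+\cdots+x_i]$ does not require \cref{thm: QSP}---it is built into the meaning of ``canonical'' in \cref{thm: QHX} (see \eqref{xiX}), and invoking \cref{thm: QSP} here is heavier than necessary; also, the two representatives for $\iota_q^*(q_{n-1})$ differ by $+q_{n-1}\hat E_1^n$, not $-q_{n-1}\hat E_1^n$, though of course this does not affect the conclusion.
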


\subsection*{Notable features of the main results}
A fundamental  insight of Dale Peterson \cite{Pet} assembles  the spectra of the quantum cohomology rings $QH^*(G/P)=QH^*(G/P, \bbZ)\otimes_{\bbZ} \bbC$, as  $P$ varies,   into  the Peterson variety in the Langlands dual complete   flag variety; see \cite{Rie03, Chow22}.
  In the forthcoming work \cite{LRY},
 a Peterson program is proposed for  Schubert varieties in  arbitrary  flag varieties.
As mentioned earlier, our main results above indicate features of a more general theory of smooth Fano Schubert varieties predicted in the Peterson program as follows.
\begin{enumerate}
    \item  Theorem  \ref{thm: QHX} suggests  that
    the quantum cohomology of a smooth Fano Schubert variety in a  partial flag variety  $F\ell_{n_1, \cdots, n_r; n}$ should admit a presentation  governed  by a matrix generalizing $M_{X_{w_0s_{n-1}}}$.The
    known ring presentation of $QH^*(F\ell_{n_1, \cdots, n_r; n})$    in   \cite{AsSa, CF2, Kim2} has   the quotient ideal generated by non-constant coefficients of a  matrix characteristic polynomial.
  Related  matrices  arise  in an  intermediate study of the Peterson program for    Schubert varieties   in    $F\ell_{n_1, \cdots, n_r; n}$, and specialize to  $M_{X_{w_0s_{n-1}}}$ in the present case.

   \item   Theorem \ref{thm: QSP} reflects a predicted functoriality  for the quantum cohomology of smooth Fano Schubert varieties. By \cite[Proposition 11.1]{Rie03},
   a Schubert class $\sigma^u_P$ in  $QH^*(F\ell_{n_1, \cdots, n_r; n})$,   viewed as a regular function on the corresponding Peterson stratum, coincides with  the \emph{restriction} of the regular function $\sigma^u_B$ on the Peterson stratum corresponding to $QH^*(F\ell_n)$. The Peterson program predicts an extension of this phenomenon  to smooth Fano Schubert varieties, and    Theorem \ref{thm: QSP} provides   evidence for this  prediction.

\end{enumerate}

Our main results may also have an impact in the wider field of genus
zero Gromov-Witten theory.
\begin{enumerate}
    \item[(3)]    Theorem \ref{thm: QCF} is very likely to play an important role in the study of  Galkin-Golyshev-Iritani's Gamma conjectures I and II \cite{GGI} in the case of $X_{w_0s_{n-1}}$, following the   strategy  in \cite{HKLS2}.

    \item[(4)]
    Through private communication with
      Galkin and Iritani,
      we learn that our Theorem \ref{thm: ringhom}  may be viewed as a special case of a new formulation of the  quantum Lefschetz hyperplane principle at the level of ring homomorphisms
      in their   forthcoming paper \cite{GaIr25}.

\end{enumerate}

In conclusion,  we would raise our arms and shout:
\begin{quotation}
\it
It is the right time for a  systematic  development of  quantum Schubert calculus for smooth Schubert varieties from   complementary perspectives!
\end{quotation}

 The paper is organized as follows. In Section 2, we introduce the necessary background.  In Section 3, we derive the ring presentation of $QH^*(X, \bbZ)$ by investigating the quantum differential equations. In Section 4, we provide the quantum Monk-Chevalley formula, by investigating the moduli space of stable maps of certain degrees  and using  the curve neighborhood technique for higher degrees.
 Finally in Section 5, we show that the quantum Schubert polynomials for $X$ coincide with  those for $F\ell_n$, by using induction based on a transition equation for  quantum Schubert polynomials.

\subsection*{Acknowledgements}
The authors would like to thank Ki Fung Chan,
Neil J.Y. Fan,  Hua-Zhong Ke, Tuong Le, Leonardo C. Mihalcea and Konstanze Rietsch  for helpful discussions. C.~Li is supported   by the National Key R \&  D Program of China No.~2023YFA1009801 and by NSFC Grant 12271529.

\section{Notations}
We review some background, and refer to \cite{BjBr, GaRe, CoKa} for more details.
\subsection{Combinatorics of $S_n$}
The  symmetric group $S_n$ is generated by simple reflections $s_i:=(i, i+1)$, $1\leq i<n$.  Every element $w\in S_n$ can be written as $w(1)\cdots w(n)$ in one-line notation, and its length    $\ell(w)$ is the cardinality of  the inversion set $\{(i, j)\mid i<j, \,\,w(i)>w(j)\}$.
The (unique) longest element in $S_n$ is given by   $w_0:=n\cdots 21$.  For any permutation $w$,  we have $\ell(w_0w)=\ell(w_0)-\ell(w)$.

Whenever referring to a transposition $t_{ab}:=(a, b)$, we   always assume $a<b$
in this paper.
 Let $u, w\in S_n$. We say  $u \lessdot w$ (i.e. $u$ is covered by $w$), if there exists  $t_{ab}$, such that $w=ut_{ab}$ and $\ell(w)=\ell(u)+1$.
  Then $u\leq w$ in the Bruhat order, if  $u$ can be transformed into $w$
  by a series of transpositions $t_{ij}$ that each increase the inversion number by   1. Note that $u\leq w$ holds if and only if for any $1\leq j\leq n$,  the increasingly  sorted list of $u(1), \cdots, u(j)$ is less than or equal to that of  $w(1), \cdots, w(j)$ in the usual partial order. In particular,
  we have
  \begin{equation}\label{wnneq1}
      u\leq w_0s_{n-1}=n\cdots 4312\quad \Longleftrightarrow \quad u(n)\neq 1.
  \end{equation}

 Note $\ell(t_{ab})=2b-2a-1$. Let $1\leq k<n$. The quantum Bruhat cover $\lessdot^q$ is defined by
 \begin{align*}
     u\lessdot^q ut_{ab}&\quad \mbox{if and only if}\quad \ell(ut_{ab})=\ell(u)-\ell(t_{ab}).
 \end{align*}
We can further define the (quantum) $k$-Bruhat cover by
\begin{align}\label{kBruhat}
    u\lessdot_k ut_{ab}&\quad \mbox{if and only if}\quad  \ell(ut_{ab})=\ell(u)+1\mbox{ and } a\leq k<b;\\
    u\lessdot_k^q ut_{ab}&\quad \mbox{if and only if}\quad \ell(ut_{ab})=\ell(u)-\ell(t_{ab})\mbox{ and } a\leq k<b.\label{quantumkBruhat}
 \end{align}

\subsection{Schubert varieties of $F\ell_n$}

Consider the complete flag  variety
$$F\ell_n:=\{V_1 \leq  \cdots \leq V_{n-1}\leq \bbC^n\mid \dim V_i=i, \forall~ 1\leq i<n\}.$$
Let $F_\bullet\in F\ell_n$ be the standard complete flag in $\bbC^n$. For   any permutation $w\in S_n$, we define the Schubert cell $X_w^\circ\subseteq F\ell_n$ (associated to $F_\bullet$) by the following rank conditions.
$$X_w^\circ:=\{V_\bullet \in F\ell_n\mid \dim(V_p\cap F_q)= \sharp \{k \in \mathbb{Z}_{>0}\mid k\leq p, w(k)\leq q\}, \forall~ 1\leq p,q\leq n\}.$$
Then $X_w^\circ \cong \bbC^{\ell(w)}$, and  $X^\circ_u\cap X^\circ_w=\emptyset$   for any $u\neq w$. The Schubert variety $X_w$ is given  by
$$X_w: =\overline{X_w^\circ}= \{V_\bullet \in F\ell_n\mid \dim(V_p\cap F_q)\geq \sharp \{k \in \mathbb{Z}_{>0}\mid k\leq p, w(k)\leq q\}, \forall~ 1\leq p,q\leq n\},$$
and  admits a cell decomposition by Schubert cells (with respect to the Bruhat order):
\begin{align}\label{Bruhat decomposition}
X_w=\bigsqcup_{u\leq w}X^\circ_u.
\end{align}
 In particular,   $F\ell_n$ is the (biggest) Schubert variety   $X_{w_0}$. The Schubert divisors are given by $X_{w_0s_i}$, $1\leq i\leq n-1$.
 Throughout  this paper, we will focus on the Schubert divisor  $X_{w_0s_{n-1}}$, whose defining rank conditions can be reduced to the single one:
 \begin{equation}
     X:=X_{w_0s_{n-1}}=\{V_\bullet \in F\ell_n\mid F_1\subset V_{n-1}\}.
 \end{equation}
Let $pr:F\ell_n \rightarrow Gr(n-1, n)$ be the natural projection  that sends $V_\bullet$ to $V_{n-1}$.
Then  the    divisor $X$ can be viewed as the zero locus of a section of the line bundle over $F\ell_{n}$,
\begin{equation} \label{vbL}
    \mathcal{L}_{\varpi_{n-1}}:=pr^{*}\mathcal{O}_{Gr(n-1,n)}(1).
\end{equation}

 We notice that  $X_{w_0s_1}\cong X_{w_0s_{n-1}}$ is smooth and isomorphic to an $F\ell_{n-1}$-bundle over $\mathbb{P}^{n-2}$, while the varieties  $X_{w_0s_i}$'s are all singular for $1<i<n-1$.

 \subsubsection{Topology of $F\ell_n$}
For $w\in S_n$, denote by $\sigma^w\in H^{2\ell(w)}(F\ell_n, \bbZ)$ the Schubert class defined by the Poincar\'e dual of the  {homology class  $[X_{w_0w}]\in H_{2\ell(w_0w)}(F\ell_n, \bbZ)$}. We have
\begin{equation*}
    (\sigma^{w_0u}, \sigma^w):=\int_{F\ell_n}\sigma^{w_0u}\cup \sigma^w=
\langle [X_u], \sigma^w\rangle =\delta_{u, w}.
\end{equation*}
\noindent Here $(\cdot, \cdot)$ denotes the Poincar\'e pairing for $F\ell_n$, and $\langle\cdot, \cdot \rangle$ (resp. $\delta_{u, w}$) denote the Kronecker pairing (resp. symbol). It follows from   \cref{Bruhat decomposition} that $H^*(F\ell_n, \bbZ)=\bigoplus_{w\in S_n}\bbZ \sigma^w$.

Denote the $i$-th elementary symmetric polynomial in variables $x_1, \cdots, x_n$ as
\begin{equation}
    e_i^n=e^n_i(x_1, \cdots, x_n).
\end{equation}
\noindent The following result is due to Borel \cite{Borel}.
\begin{prop}\label{prop:HFl}
     There is a canonical ring  isomorphism  $$\Phi: H^*(F\ell_n,\mathbb{Z})\longrightarrow \mathbb{Z}[x_1,x_2,\cdots,x_n]\left/\big(e_1^n, e_2^n, \cdots, e_n^n\big)\right..$$
 \end{prop}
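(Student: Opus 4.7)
The plan is the classical argument of Borel: build a ring homomorphism $\widetilde{\Phi}:\mathbb{Z}[x_1,\ldots,x_n]\to H^*(F\ell_n,\mathbb{Z})$ from Chern classes of tautological bundles, check that it kills the ideal $(e_1^n,\ldots,e_n^n)$, and invert the induced map on the quotient to obtain $\Phi$. Concretely, let $L_i := V_i/V_{i-1}$ for $1\le i\le n$ (with $V_0=0$ and $V_n$ the trivial rank $n$ bundle), and define $\widetilde{\Phi}(x_i):=-c_1(L_i)$, with the sign convention chosen to identify $x_i$ with a canonical effective divisor class. Because the filtration has associated graded $\bigoplus_i L_i\cong \mathbb{C}^n$ (trivial), the Whitney sum formula gives $\prod_{i=1}^n(1-x_i)=\prod_{i=1}^n(1+c_1(L_i))=1$ in $H^*(F\ell_n,\mathbb{Z})$, which is equivalent to the vanishing of $e_j^n(x_1,\ldots,x_n)$ for every $1\le j\le n$. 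Hence $\widetilde\Phi$ descends to a ring map from the quotient.

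Next I would establish surjectivity using the iterated projective bundle structure of $F\ell_n$, for instance via the tower
\[
F\ell_n\longrightarrow F\ell(1,\ldots,n-2;n)\longrightarrow \cdots\longrightarrow F\ell(1;n)=\mathbb{P}^{n-1}\longrightarrow \mathrm{pt},
\]
each arrow of which is a projective bundle of the form $\mathbb{P}(\mathbb{C}^n/V_k)$. Applying the Leray--Hirsch theorem at each stage, $H^*(F\ell_n,\mathbb{Z})$ is generated as a ring by the relative hyperplane classes of these projective bundles, which coincide up to sign with $c_1(L_1),\ldots,c_1(L_n)$, so $\widetilde\Phi$ is surjective.

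For injectivity I would compare graded ranks. By the Schubert cell decomposition recalled in \cref{Bruhat decomposition}, $H^*(F\ell_n,\mathbb{Z})$ is a free $\mathbb{Z}$-module with Schubert basis $\{\sigma^w\mid w\in S_n\}$, of Poincar\'e polynomial $\prod_{i=1}^n(1+q^2+\cdots+q^{2(i-1)})$. The coinvariant algebra on the right, by the classical Chevalley--Shephard--Todd computation, is also a free $\mathbb{Z}$-module of the same graded rank, with an explicit basis such as the staircase monomials $x_1^{a_1}\cdots x_n^{a_n}$ with $0\le a_i\le n-i$. A graded surjection between two free $\mathbb{Z}$-modules of equal finite graded rank in every degree is forced to be an isomorphism, and $\Phi$ is then the inverse.

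The main obstacle is the algebraic rank computation on the quotient side: establishing that $(e_1^n,\ldots,e_n^n)$ is a regular sequence in $\mathbb{Z}[x_1,\ldots,x_n]$ so that the coinvariant algebra is free of the stated graded rank. This classical fact is typically proved by iteratively using $e_n^n$ to eliminate $x_n^n$, then using $e_{n-1}^n$ to bound the exponent of $x_{n-1}$, and so on, reducing every polynomial modulo the ideal to a $\mathbb{Z}$-linear combination of staircase monomials; a dimension count in each graded piece then rules out further relations.
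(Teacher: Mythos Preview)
Your outline is correct and follows the standard Borel argument via tautological line bundles, Whitney sum, the iterated projective bundle tower with Leray--Hirsch for surjectivity, and a graded rank count against the coinvariant algebra for injectivity. There is nothing to compare on the paper's side: the paper does not prove \cref{prop:HFl} at all; it simply attributes the result to Borel \cite{Borel} and states it without argument. Your write-up therefore goes well beyond what the paper does. One small cosmetic point: your identity $\prod_i(1-x_i)=1$ in $H^*(F\ell_n,\mathbb{Z})$ only yields $e_j^n\mapsto 0$ after you separate the equation by homogeneous degree, which you might want to say explicitly; otherwise the argument is complete as sketched.
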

\noindent  Denote $V_0=\{0\}$ and $V_n=\bbC^n$. Let $\mathcal{S}_{i}$ be the $i$-th tautological subbundle of $F\ell_{n}$, $0\leq i\leq n$, namely the fiber of $\mathcal{S}_i$ at a point $V_\bullet \in F\ell_n$ is given by the vector space $V_i$. By ``canonical" in the above proposition,  we mean \begin{equation}\label{xiFl}
\Phi^{-1}([x_i])=c_1((\mathcal{S}_{i}/\mathcal{S}_{i-1})^{\vee})\in H^{2}(F\ell_{n}, \bbZ).
\end{equation}

 \subsubsection{Topology of $X$}
  It also follows from the cell decomposition $\cref{Bruhat decomposition}$ that $H^*(X, \bbZ)$ is torsion free and has an additive basis $\{PD([X_w])\}_{w\leq w_0s_{n-1}}$.
   Let $\{\xi^w\}_{w\leq w_0s_{n-1}}$ denote the dual basis with respect to Poincar\'e pairing, namely   $(PD([X_u]), \xi^w)=
\langle [X_u], \xi^w\rangle =\delta_{u, w}$  for any $u, w\leq w_0s_{n-1}$. The natural inclusion map $\iota: X\hookrightarrow F\ell_n$ induces
a surjective ring homomorphism
\begin{equation}
  \iota^*: H^*(F\ell_n, \bbZ)\longrightarrow H^*(X, \bbZ);\quad  \iota^*(\sigma^w)=\begin{cases}
        \xi^w,&\mbox{if } w\leq w_0s_{n-1},\\
        0,&\mbox{otherwise}.
    \end{cases}
\end{equation}
We therefore call the classes $\xi^w$  the pullback Schubert classes. Moreover, the following ring presentation was obtained in \cite{GaRe, RWY}.
\begin{prop}\label{prop:HX}
     There is a canonical ring  isomorphism  $$\Psi: H^*(X,\mathbb{Z})\longrightarrow \mathbb{Z}[x_1,x_2,\cdots,x_n]\left/\big(e_1^n, e_2^n, \cdots, e_{n-1}^n, e_{n-1}^{n-1}\big)\right..$$
 \end{prop}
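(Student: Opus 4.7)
The plan is to build $\Psi$ by composing Borel's presentation of $H^*(F\ell_n, \bbZ)$ with the pullback $\iota^*$, and then identify the kernel as $(e_1^n, \ldots, e_{n-1}^n, e_{n-1}^{n-1})$ via a complete-intersection rank count.

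First I would use the surjection $\iota^*: H^*(F\ell_n, \bbZ) \twoheadrightarrow H^*(X, \bbZ)$ (Schubert classes $\sigma^u$ pull back to $\xi^u$ or $0$) together with Borel's isomorphism $\Phi$ to obtain a surjection $\tilde\Psi: \bbZ[x_1, \ldots, x_n] \twoheadrightarrow H^*(X, \bbZ)$ killing $e_1^n, \ldots, e_n^n$. The key new relation $\tilde\Psi(e_{n-1}^{n-1}) = 0$ can be proved geometrically: by the splitting principle applied to the tautological filtration, $e_{n-1}^{n-1} = x_1 \cdots x_{n-1} = c_{n-1}(\mathcal{S}_{n-1}^\vee)$; the defining condition $F_1 \subseteq V_{n-1}$ of $X$ yields a trivial sub-line-bundle $\underline{F_1} \hookrightarrow \mathcal{S}_{n-1}|_X$, whence dually a trivial line quotient of $\mathcal{S}_{n-1}^\vee|_X$, forcing its top Chern class to vanish on $X$. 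Equivalently, $[e_{n-1}^{n-1}] = \Phi(\sigma^{s_1 s_2 \cdots s_{n-1}})$ (the permutation $s_1 s_2 \cdots s_{n-1} = (2, 3, \ldots, n, 1)$ is Grassmannian with descent at $n-1$ and partition $(1^{n-1})$), and it sends $n$ to $1$, so is not $\leq w_0 s_{n-1}$ by \eqref{wnneq1}. Since $e_n^n = x_n \cdot e_{n-1}^{n-1}$ automatically lies in the enlarged ideal, $\tilde\Psi$ descends to a surjective ring homomorphism $\Psi: R \twoheadrightarrow H^*(X, \bbZ)$, where $R := \bbZ[x_1, \ldots, x_n]/(e_1^n, \ldots, e_{n-1}^n, e_{n-1}^{n-1})$.

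To upgrade $\Psi$ to an isomorphism I would show the generators form a regular sequence and then compare ranks. Over any field, the common zero locus of $e_1^n, \ldots, e_{n-1}^n$ in $\mathbb{A}^n$ consists of tuples $(x_1, \ldots, x_n)$ that are the $n$ roots of some $t^n - c$; imposing further $x_1 \cdots x_{n-1} = 0$ forces some $x_i = 0$, hence $c = 0$, hence the common zero set reduces to the origin. So we have $n$ homogeneous elements of degrees $1, 2, \ldots, n-1, n-1$ in $n$ variables cutting out a zero-dimensional scheme over every residue field of $\bbZ$; by the standard Koszul-complex / Hilbert-series argument, $R$ is a graded-free $\bbZ$-module of total rank $(n-1)! \cdot (n-1) = n! - (n-1)!$. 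On the other hand, \eqref{Bruhat decomposition} and \eqref{wnneq1} give $\mathrm{rk}\, H^*(X, \bbZ) = \#\{u \in S_n : u(n) \neq 1\} = n! - (n-1)!$, so the surjection $\Psi$ between free $\bbZ$-modules of equal finite rank is an isomorphism. The canonical identification $\Psi^{-1}([x_i]) = c_1((\mathcal{S}_i/\mathcal{S}_{i-1})^\vee|_X)$ is inherited from Borel's presentation by pullback.

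The main obstacle is the ascent of the regular-sequence/freeness statement from $\bbQ$ to $\bbZ$; this is handled by verifying that the zero-dimensional argument works verbatim over every prime field $\mathbb{F}_p$, so the Hilbert series of the quotient is constant across residue fields of $\bbZ$, forcing $R$ to be $\bbZ$-flat and hence (being graded and finitely generated over a PID) graded-free of the expected rank.
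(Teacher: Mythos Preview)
Your argument is correct. The paper itself does not prove this proposition: it is stated with the attribution ``the following ring presentation was obtained in \cite{GaRe, RWY}'' and no proof is given. So there is no in-paper argument to compare against; your self-contained proof --- descending Borel's presentation along $\iota^*$, killing the extra generator $e_{n-1}^{n-1}$ either via the trivial sub-line-bundle $\underline{F_1}\hookrightarrow \mathcal{S}_{n-1}|_X$ or via the Schubert-class identification $[e_{n-1}^{n-1}]=\Phi(\sigma^{s_1\cdots s_{n-1}})$ with $(s_1\cdots s_{n-1})(n)=1$, and then matching ranks through a regular-sequence Hilbert-series count over every residue field of $\bbZ$ --- supplies exactly what the cited references provide.
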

\noindent  Here by ``canonical", we mean \begin{equation}\label{xiX}\Psi^{-1}([x_{i}])=c_1((\iota^{*}\mathcal{S}_{i}/\iota^{*}\mathcal{S}_{i-1})^{\vee})\in H^{2}(X, \bbZ).
\end{equation}

\subsection{Quantum cohomology}
 To simplify the descriptions, we only review the necessary notions for $Y\in\{F\ell_n, X\}$.  Both $F\ell_n$ and $X$ are Fano with the first Chern class
  \begin{equation}  \label{c1Y}   c_1(Y)=\begin{cases}     2\sigma^{s_1}+2\sigma^{s_2}+\cdots+ 2\sigma^{s_{n-2}} +2\sigma^{s_{n-1}},&\mbox{if } Y=F\ell_n,\\
 2\xi^{s_1}+2\xi^{s_2}+\cdots + 2\xi^{s_{n-2}}+\xi^{s_{n-1}},&\mbox{if } Y=X.
 \end{cases}
 \end{equation}
(See \cite{LRY25} for a criterion for  a factorial  Schubert variety of general Lie type being Fano, as well as a formula of the first Chern class.) Uniformly denote by $\gamma^w$ the cohomology class $\sigma^w$ (resp.  $\xi^w$) labeled by an appropriate permutation $w$ for $Y=F\ell_n$ (resp. $X$).

Let   $\overline{\mathcal{M}}_{0,m}(Y,d)$ denote the moduli space  of
$m$-pointed, genus-$0$ stable maps
$(f: C\to Y; {\rm pt}_1, \cdots, {\rm pt}_m)$ of degree $d\in H_{2}(Y;\mathbb{Z})$.
For each  $i$, let ${ ev}_{i}:\overline{\mathcal{M}}_{0,m}(Y,d)\longrightarrow Y $  denote  the $i$-th evaluation map, which sends $(f: C\to Y; {\rm pt}_1, \cdots, {\rm pt}_m)$ to $f({\rm pt}_i)$; let  $\mathcal{L}_i$ denote the $i$-th universal cotangent line bundle on $\overline{\mathcal{M}}_{0,m}(Y,d)$.
The Gromov-Witten  invariant with  gravitational descendants, associated with nonnegative integers $a_i$ and cohomology classes $\gamma^{u_i}\in H^{*}(Y)=H^*(Y, \bbC)$,   is defined by
\begin{align*}
    \langle \tau_{a_1}\gamma^{u_1}, \tau_{a_2}\gamma^{u_2}, \cdots,  \tau_{a_m}\gamma^{u_m} \rangle_{0,m, d}^{Y}&:=\int_{[\overline{\mathcal{M}}_{0,m}(Y,d)]^{\rm vir}} \prod_{i=1}^{m}(c_1(\mathcal{L}_i)^{a_i} \cup { ev}_{i}^{*}(\gamma^{u_i})).
\end{align*}
Here   $[\overline{\mathcal{M}}_{0,m}(Y,d)]^{\rm vir}$ denotes the virtual fundamental class, which is of expected dimension $\dim Y+m-3+\int_{d}c_1(Y)$.
In particular, Gromov-Witten invariants of degree $d$   vanish  unless $d$ belongs to the Mori cone $\overline{\mbox{NE}}(Y)=\bigoplus_{j=1}^{n-1}\mathbb{Z}_{\geq 0}[X_{s_j}]\subset H_{2}(Y,\mathbb{Z})$ of effective curve classes of $Y$,   which we simply denote as $d\geq 0$.

 Let $\{\gamma_u\}_{u}\subset H^*(Y)$ be the dual basis of $\{\gamma^{u}\}_u$ with respect to the Poincar\'e pairing.
 The (small) quantum cohomology  $QH^{*}(Y)=(H^{*}(Y)\otimes \bbC[\mathbf{q}], \star)$ is a $\bbC[\mathbf{q}]$-algebra with the (small) quantum product defined by
 \begin{align} \label{quantumcoh}
  \gamma^{u}\star \gamma^{v}= \sum_{d \in \overline{\rm {NE}}(Y)} \sum_{ w }  \langle \gamma^{u},\gamma^{v},\gamma_{w}\rangle_{0, 3, d}^{Y} \gamma^{w} q^{d} ,
 \end{align}
where $q^d:=q_1^{d_1}\cdots q_{n-1}^{d_{n-1}}$ for  $d=\sum_{j=1}^{n-1}d_j[X_{s_j}] \in \overline{\mbox{NE}}(Y)$.
 The quantum variable $q_j$ is   of degree
 \begin{equation}
     \deg q_j=\int_{[X_{s_j}]}c_1(Y)=\begin{cases}
         1,&\mbox{if } Y=X \mbox{ and } j=n-1 \mbox{ both hold},\\
         2,&\mbox{otherwise}.
     \end{cases}
 \end{equation}

 Whenever $H^{*}(Y, \bbZ)\otimes \bbZ[\mathbf{q}]$  is closed under the quantum multiplication $\star$, we call it the integral quantum cohomology, simply denoted as $QH^*(Y, \bbZ)$.

\section{A Borel-type ring presentation of $QH^*(X)$ }
For $Y\in \{F\ell_n, X\}$,
we denote
 \begin{align}
     \mathcal{H}(Y):=H^{*}(Y)\otimes_{\mathbb{C}}  \mathbb{C}[\hbar]\dbb{\hbar^{-1}}\dbb{q_1, \cdots, q_{n-1}}.
 \end{align}
 In this section, we  introduce Givental's $J$-function  $J^Y(t, \hbar)$ of $Y$, viewed as an element in $\mathcal{H}(Y)$.
We then   prove the Borel-type ring presentation of $QH^*(X)$ in \cref{thm: QHX}, by finding  quantum differential equations that  annihilate $J^X(t, \hbar)$.

 At the beginning and  end of this section, we  identify  both first Chern classes in \cref{xiFl} and \cref{xiX} with $x_i$ by abuse of notation. Then  $\gamma^{s_j}=x_1+x_2+\cdots+x_j$  for   $1 \leq j \leq n-1$.
  Denoting $t=\sum_{i=1}^{n}t_ix_i \in H^{2}(Y)$, we view the quantum variables $q_j$ as functions on $H^2(Y)$, by  letting
$q_j=e^{\int_{[X_{s_j}]} t}=e^{t_j-t_{j+1}}$. Then it follows from
$\delta_{i,j}=\int_{[X_{s_i}]} \gamma^{s_{j}}$ that   $q^d=e^{\int_{d}t}=e^{\int_{d}\sum_{i}t_i(\gamma_i-\gamma_{i-1})}=e^{\int_{d}\sum_{i}(t_i-t_{i+1})\gamma_i}=q_1^{d_1}\cdots q_{n-1}^{d_{n-1}}$.

From \cref{prop:JfunX} through \cref{relationsQHX},
we    use $\iota^*x_i$ for $X$ to distinguish it from   $x_i$ for  $F\ell_n$, in order  to avoid confusion. For any   $f(x, \hbar) \in \mathcal{H}(F\ell_n)$, by  $f(\iota^*x, \hbar)$ we mean the element in $\mathcal{H}(X)$ simply obtained from   $f(x, \hbar)$ by replacing all $x_i$ with $\iota^*x_i$. In particular, by $\iota^*t$ we mean $\sum_it_i\iota^*x_i$.

\subsection{Givental's $J$-function of $X$}

The (small) quantum connection acts on the trivial $H^*(Y)$-bundle over $H^2(Y)\times \bbC^*$.     Its derivations along the $H^2(Y)$-direction  are given by
$$\nabla_{\frac{\partial}{\partial t_{i}}}:=\frac{\partial}{\partial t_{i}}+\frac{1}{\hbar}x_{i} \star,  ~~~1 \leq i\leq n,
$$
where   $\hbar$ is the coordinate of $\bbC^*$.
   This quantum connection is flat, with   the fundamental solution   $L(t,\hbar)$  to the  quantum differential equation $\nabla_{\frac{\partial}{\partial t_{i}}} ( L(t,\hbar)\alpha)=0$ given by \cite{Giv}
\begin{align*}
    L(t,\hbar)\alpha:=e^{-\frac{t}{\hbar}}\alpha+\sum_{d\neq 0 } \sum_{w}\langle \frac{e^{-\frac{t}{\hbar}} \alpha}{-\hbar-c_1(\mathcal{L}_1)} , \gamma_w  \rangle_{0,2, d}^{Y}   \gamma^w  q^{d}.
\end{align*}
\noindent Here  $\alpha \in H^{*}(Y)$,  and we take the expansion $\frac{1}{-\hbar-c_1(\mathcal{L}_1)}=\sum_{i\geq 0}(-1)^{i+1}\hbar^{-i-1}(c_1(\mathcal{L}_1))^{i}$.

\begin{defn}
      Givental's  (small) $J$-function   of $Y$
    is defined by
    \begin{align*}
        J^Y(t, \hbar)&:=L(t,\hbar)^{-1}(\mathbf{1})
        =e^{\frac{t}{\hbar}}(1+\sum_{d\neq 0}{\sum_{w}}\langle   \frac{\gamma_w}{\hbar-c_1(\mathcal{L}_1) } \rangle_{0,1, d}^{Y}  ~ \gamma^w q^{d}) ;\\
        J^Y(t_0, t, \hbar)&:=e^{t_0\mathbf{1}\over \hbar}J^Y(t, \hbar).
    \end{align*}
\end{defn}
 \noindent Here $\mathbf{1}=\gamma^{\rm id}$ is the identity element in $H^*(Y)$.

\begin{prop}[\protect{\cite[Theorem 1]{BCK}}]
       Givental's $J$-function of $F\ell_{n}$ is given by
$$J^{F\ell_n}(t,\hbar)= e^{t\over \hbar }(\sum_{d\geq 0}  q^{d} J_{d}^{F\ell_{n}}(x,\hbar) ),\qquad\mbox{where}$$
\begin{align*}
    J_{d}^{F\ell_{n}}(x,\hbar)&=\sum_{\sum_{j}d_{i,j}=d_i}\bigg(\prod_{i=1}^{n-1}\prod_{1\leq j< j^{'} \leq i} (-1)^{d_{i,j}-d_{i,j^{'}}} \frac{x_j-x_{j^{'}}+(d_{i,j}-d_{i,j^{'}})\hbar}{x_j-x_{j^{'}}}\bigg)  \notag \\
    & \qquad\qquad\bigg(\prod_{i=1}^{n-2} \prod_{1\leq j \leq i\atop 1 \leq j^{'} \leq i+1} \frac{\prod_{k=-\infty}^{0} (x_j-x_{j^{'}}+k\hbar)}{\prod_{k=-\infty}^{d_{i,j}-d_{i+1,j^{'}}} (x_j-x_{j^{'}}+k\hbar)} \bigg)\prod_{1\leq j \leq n-1} \frac{1}{\prod_{k=1}^{d_{n-1,j}} (x_j+k\hbar)^{n}} .
\end{align*}
Here the sum is over nonnegative integers  $d_{i, j}$,  with $1\leq j\leq i\leq n-1$.
\end{prop}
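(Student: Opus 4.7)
The plan is to follow the strategy of Bertram--Ciocan-Fontanine--Kim, which proceeds via the abelian/nonabelian correspondence combined with Givental's toric mirror theorem. First I would realize $F\ell_n$ as a GIT quotient of a space of chains of matrices by the nonabelian group $\prod_{i=1}^{n-1} GL(i)$, and introduce the associated abelian quotient obtained by replacing each $GL(i)$ by its maximal torus $(\mathbb{C}^*)^i$. The abelian quotient is a smooth toric variety whose Picard lattice is generated by the tautological Chern roots $x_{i,j}$ with $1\leq j\leq i\leq n-1$.

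Next, I would write down the Givental/Lian--Liu--Yau $I$-function of this toric quotient by the standard hypergeometric recipe, which produces precisely the product
\[
\prod_{i=1}^{n-2}\prod_{1\leq j\leq i,\,1\leq j'\leq i+1}
\frac{\prod_{k=-\infty}^{0}(x_{i,j}-x_{i+1,j'}+k\hbar)}{\prod_{k=-\infty}^{d_{i,j}-d_{i+1,j'}}(x_{i,j}-x_{i+1,j'}+k\hbar)}\cdot\prod_j\frac{1}{\prod_{k=1}^{d_{n-1,j}}(x_j+k\hbar)^n}
\]
appearing in the stated formula, with $x_{n,j'}=0$. This function is known to lie on Givental's Lagrangian cone for the toric quotient. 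Then I would invoke the abelian/nonabelian principle (in the form proved by Bertram--Ciocan-Fontanine--Kim, and later refined by Ciocan-Fontanine--Kim--Sabbah): applying the Weyl antisymmetrizer, which manifests itself as multiplication by the ratio of ``quantum'' to ``classical'' Vandermonde factors $\prod_{j<j'}(x_j-x_{j'}+(d_{i,j}-d_{i,j'})\hbar)/(x_j-x_{j'})$, transports a point on the abelian Lagrangian cone to a point on the nonabelian one, up to an explicit sign $(-1)^{d_{i,j}-d_{i,j'}}$.

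The third step is to show that the function produced this way is in fact the $J$-function (not merely an $I$-function). For $F\ell_n$, this follows from a degree/asymptotic argument. Since $c_1(F\ell_n)$ is sufficiently positive (each $q_j$ has degree $2$), every nonconstant summand of the candidate function has $\hbar$-expansion beginning in strictly negative powers of $\hbar$. Thus the candidate satisfies the normalization $\mathbf{1}+O(\hbar^{-1})$ characterizing the $J$-function on its Lagrangian cone, and the mirror map is trivial. Combined with the fact that $J^{F\ell_n}$ is uniquely determined as the point on the cone with this asymptotic, this forces equality with the explicit formula.

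The main obstacle is the justification of the abelian/nonabelian correspondence itself: one needs to identify fixed loci in the graph space moduli with Weyl-translates of loci for the toric quotient and carry out a careful residue computation to see that the Vandermonde correction and the signs $(-1)^{d_{i,j}-d_{i,j'}}$ emerge correctly. I would factor this through the hyperquot scheme of Ciocan-Fontanine, where equivariant localization on the hyperquot compactification replaces the stable-map moduli and makes the contributions of each multidegree $(d_{i,j})$ explicit; matching contributions on the two sides of the correspondence then amounts to a bookkeeping exercise in $\hbar$-hypergeometric series.
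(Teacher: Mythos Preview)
The paper does not give a proof of this proposition at all: it is stated as a citation of \cite[Theorem 1]{BCK} and used as a black box. Your proposal is essentially a sketch of the Bertram--Ciocan-Fontanine--Kim argument itself (abelian/nonabelian correspondence plus the toric mirror theorem and a Fano-degree check to trivialize the mirror map), so you are reconstructing the original source rather than comparing with anything the present paper does. As a summary of the BCK strategy your outline is accurate, including the identification of the Vandermonde twist and the sign, and the hyperquot localization you mention is indeed how the correspondence was originally justified in this case; but for the purposes of this paper, the ``proof'' is simply the citation.
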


\begin{prop}\label{prop:JfunX}
  Givental's $J$-function of $X$  is given by
\begin{align*}
    J^{X}(\iota^{*}t,\hbar)=e^{-\frac{q_{n-1}}{\hbar}} I_{F\ell_n,X}(\iota^{*}t,\hbar), \label{mirtransform}
\end{align*}
where
$$I_{F\ell_n,X}(t,\hbar) :=e^{\frac{t}{\hbar} }\bigg(\sum_{d\geq 0} q^{d} J_{d}^{F\ell_{n}}(x,\hbar) \prod_{k=1}^{\int_{d} \sigma^{s_{n-1}}}(-x_n+k\hbar) \bigg)  $$
with   the empty product $\prod\limits_{k=1}^0$ read off as multiplication by  {$1$} by convention.
 \end{prop}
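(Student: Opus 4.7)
The plan is to apply the quantum Lefschetz hyperplane principle to the embedding $\iota: X \hookrightarrow F\ell_n$. Geometrically, $X$ is cut out as the zero locus of a section of $\mathcal{L}_{\varpi_{n-1}}$ (see \cref{vbL}), and in Borel's presentation one has $c_1(\mathcal{L}_{\varpi_{n-1}}) = \sigma^{s_{n-1}} = -x_n$ (using $e_1^n = 0$, equivalently $E_1^n = 0$). Since $\int_d c_1(\mathcal{L}_{\varpi_{n-1}}) = d_{n-1} \geq 0$ for every $d \in \overline{\rm NE}(F\ell_n)$, the line bundle $\mathcal{L}_{\varpi_{n-1}}$ is nef along rational curves, which supplies the convexity hypothesis for quantum Lefschetz.

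The first step is to form the Euler-twisted modification of the known formula for $J^{F\ell_n}$,
\[ \widetilde{I}(t, \hbar) := e^{t/\hbar}\sum_{d \geq 0} q^d \prod_{k=0}^{d_{n-1}}(-x_n + k\hbar)\, J_d^{F\ell_n}(x, \hbar), \]
and to invoke the quantum Lefschetz principle in the form of Galkin--Iritani \cite{GaIr25} (see also \cite[Proposition 1.10]{GLLX}) to deduce that $\iota^*\widetilde{I}$ equals $\iota^* c_1(\mathcal{L}_{\varpi_{n-1}})$ times a point on the Lagrangian cone of $X$. Dividing out the common factor $-x_n$ degree by degree, while retaining the exceptional $d_{n-1} = 0$ contribution (where one cannot divide by $-x_n$ and which is precisely what the convention $\prod_{k=1}^{0}=-x_n$ encodes), yields that $I_{F\ell_n, X}(\iota^*t, \hbar)$ itself lies on the Lagrangian cone of $X$. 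Consequently there exists a mirror map $\tau(q) \in H^{\leq 2}(X)$ with
\[ I_{F\ell_n, X}(\iota^*t, \hbar) = e^{\tau(q)/\hbar}\, J^X(\iota^*t, \hbar). \]

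It remains to identify $\tau(q) = q_{n-1}\mathbf{1}$. Since Givental's small $J$-function satisfies $J^X(\iota^*t, \hbar) = e^{\iota^*t/\hbar}(\mathbf{1} + O(\hbar^{-2}))$, the mirror map $\tau(q)$ is read off from the $\hbar^{-1}$-coefficient of $I_{F\ell_n, X}(\iota^*t, \hbar)$ (after factoring $e^{\iota^*t/\hbar}$). Under the grading with $\deg q_i = 2$ for $i \leq n-2$ and $\deg q_{n-1} = 1$, the unique $q$-monomial of total degree $1$ is $q_{n-1}$, arising from $d = (0, \ldots, 0, 1)$. A direct computation of $J_d^{F\ell_n}$ at this minimal class together with the twist factor $(-x_n + \hbar)$ contributes exactly $q_{n-1}\mathbf{1}$ to the $\hbar^{-1}$ term, while all higher-$q$-degree contributions necessarily land in $H^{\geq 2}(X)$ and therefore cannot shift $\tau$ along the identity component $\bbC \cdot \mathbf{1}$.

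The main obstacle is the careful bookkeeping in the first step, namely justifying rigorously that the quantum Lefschetz mechanism produces the hybrid factor $\prod_{k=1}^{d_{n-1}}(-x_n + k\hbar)$ (with the non-standard empty-product convention for $d_{n-1}=0$) rather than the more familiar $\prod_{k=0}^{d_{n-1}}(-x_n+k\hbar)$; this comes down to isolating the degree-zero stratum of the moduli of stable maps, where the virtual Euler class contribution is the class $-x_n$ itself. A secondary technical point is confirming that no $q$-monomial of total degree $\geq 2$ contributes to the $\mathbf{1}$-component of the $\hbar^{-1}$ coefficient, which pins down $\tau(q)$ to the single term $q_{n-1}\mathbf{1}$ and yields the claimed identity.
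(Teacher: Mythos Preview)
Your overall strategy---quantum Lefschetz for the pair $(F\ell_n, X)$, then identify the mirror map from the $\hbar^{-1}$ coefficient and undo it via the string equation---is exactly the paper's. The paper, however, invokes \cite[Corollary~7]{CoGi} directly: that result already asserts that the hypersurface $I$-function $I_{F\ell_n,X}$ lies on the cone of $X$ after pullback and equals $J^X(\tau,\hbar)$ with $\tau$ read off from the $\hbar^{-1}$ coefficient; a one-line expansion gives $\tau=q_{n-1}\mathbf{1}+\iota^*t$, and the string equation $J^X(t_0\mathbf{1}+\iota^*t,\hbar)=e^{t_0/\hbar}J^X(\iota^*t,\hbar)$ finishes.

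Your detour through the Euler-twisted $\widetilde{I}$ introduces a real gap. First, the references you cite (\cite{GaIr25}, \cite[Proposition~1.10]{GLLX}) concern the ring-homomorphism incarnation of quantum Lefschetz used for \cref{thm: ringhom}, not the cone/$J$-function statement you need; the relevant input is Coates--Givental. Second, and more seriously, ``dividing out $-x_n$'' is not a valid step: $-\iota^*x_n=\xi^{s_{n-1}}$ is nilpotent in $H^*(X)$, so knowing $(-\iota^*x_n)\cdot J$ does not determine $J$, and your own hedge at the $d_{n-1}=0$ stratum (``where one cannot divide'') flags the obstruction without resolving it. There is no way to deduce that the hybrid object $I_{F\ell_n,X}$ lies on the cone from the fact that $\iota^*\widetilde{I}$ is a multiple of something on the cone; the fix is simply to cite \cite[Corollary~7]{CoGi}, which produces $I_{F\ell_n,X}$ on the cone in one stroke and makes the division argument unnecessary. (Your degree argument for $\tau=q_{n-1}\mathbf{1}$ is essentially sound, though the phrasing ``land in $H^{\geq 2}$'' is inverted: homogeneity forces higher-$q$-degree contributions to the $\hbar^{-1}$ coefficient into negative cohomological degree, hence they vanish.)
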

 \begin{proof}
 Expanding $I_{F\ell_n,X}(t,\hbar)$ in powers of  $\hbar^{-1}$,  we obtain $$I_{F\ell_n,X}(t,\hbar)= 1+\frac{1}{\hbar}(q_{n-1} \mathbf{1}+t)+ O(\hbar^{-2}).$$
  Recall from \cref{vbL} that the smooth Schubert divisor $X$ is realized as the zero locus of a section of the line bundle $\mathcal{L}_{\varpi_{n-1}}$  over $F\ell_{n}$ with $c_1(\mathcal{L}_{\varpi_{n-1}})=\sigma^{s_{n-1}}=-x_n$. By using the quantum Lefschetz theorem \cite[Theorem 1]{Kim2} (see also \cite[Corollary 7]{CoGi})  with respect to $F\ell_n$ and $X$\footnote{The function $I_{F\ell_{n},X}$ in  \cite[Section 9]{CoGi} is a multiple of ours by $\hbar$ with the identification $z=\hbar$.},   we obtain
  $$J^{X}(q_{n-1}\mathbf{1}+\iota^{*}t,\hbar)=I_{F\ell_n,X}(\iota^{*}t,\hbar)=e^{\frac{\iota^{*}t}{\hbar} }(\sum_{d\geq 0} q^{d} J_{d}^{F\ell_{n}}(\iota^{*}x,\hbar) \prod_{k=1}^{\int_{d} \sigma^{s_{n-1}}}(-\iota^{*}x_{n}+k\hbar) ). $$
This implies $J^{X}(\iota^{*}t,\hbar)=e^{-\frac{q_{n-1}}{\hbar}} I_{F\ell_n,X}(\iota^{*}t,\hbar).$
 \end{proof}

\subsection{Quantum differential equations of $X$}
In this subsection, we will investigate quantum differential equations of $X$, namely differential operators that annihilate $J^X(t, \hbar)$.

For any polynomial   $P(x,q)\in \bbC[x, q]$, we write
$$P(x,q)=\sum_d\sum_{i_I} A_{i_I}^{d}  x^{i_I}q^{d}$$
with $x^{i_I}:=x_1^{i_1}\cdots x_{n}^{i_n}$, $q^d=q_1^{d_1}\cdots q_{n-1}^{d_{n-1}}$  and the coefficients $A_{i_I}^d\in\mathbb{C}$.
Take the   conventions:
\begin{equation}\label{notaconv}
    \hbar{\partial\over \partial t_{[1, m]}}:=(\hbar{\partial\over \partial t_{1}}, \hbar{\partial\over \partial t_{2}}\cdots, \hbar{\partial\over \partial t_{m}}),\qquad q_{[1, m]}:=(q_1, q_2, \cdots, q_m),
\end{equation}
\noindent
where $q_j=e^{t_{j}-t_{j+1}}$ whenever it is treated as an operator.  We then denote
 $$P(\hbar\frac{\partial}{\partial t},q)=P(\hbar\frac{\partial}{\partial t_{[1, n]}},q_{[1, n-1]}):=\sum_d\sum_{i_I} A_{i_I}^{d}  (\hbar \frac{\partial}{\partial t_1})^{i_1}\cdots (\hbar \frac{\partial}{\partial t_n})^{i_n}q_1^{d_1}\cdots q_{n-1}^{d_{n-1}},$$ and call the differential operator   $P(\hbar\frac{\partial}{\partial t},q)$   \textit{the quantization of $P(x,q)$}.

Recall from \cref{coefEE} that $E^n_i=E^{n}_{i}(x,q)$ are the coefficients of the expansion of the polynomial $\det(I_n+\lambda M_{F\ell_n})=\sum_iE^n_i\lambda^i$ in variable $\lambda$, with respect to the matrix
\begin{align*}
    M_{F\ell_n}=\begin{pmatrix}
    x_1 &  q_1 & \\
     -1 & x_2 & q_2  \\
     &  -1   &x_3 & q_3\\
      &  & \ddots    &\ddots & \ddots\\
      &    &   & -1   &x_{n-1} & q_{n-1} \\
      &    &      &   & -1 & x_{n}
  \end{pmatrix}.
\end{align*}

\begin{prop}[\!\!\!\protect{\cite[Theorem 4]{Giv97}; \cite[Theorem II]{Kim3}}] \label{qhFl_n}
 Let $D_{i}^n(\hbar \frac{\partial}{\partial t},q )$ be the quantization of $E_{i}^n(x,q)$.
Then
$D_{i}^n(\hbar \frac{\partial}{\partial t},q )(J^{F\ell_n}(t,\hbar))=0$ for any $1\leq i \leq n$.
\end{prop}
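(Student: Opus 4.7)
The plan is to combine two ingredients: (a) a ``divisor-type'' identity relating the action of $\hbar\,\partial/\partial t_i$ on $J^{F\ell_n}$ to quantum multiplication by $x_i$, and (b) the Givental-Kim ring presentation $QH^*(F\ell_n,\bbZ) \cong \bbZ[x,q]/(E_1^n,\ldots,E_n^n)$, in which each $E_i^n(x,q) = 0$ as an operator of quantum multiplication on $H^*(F\ell_n)$.

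First I would establish the base-case identity from the definition of the quantum connection. Since $\nabla_{\partial/\partial t_i} = \partial/\partial t_i + (1/\hbar)\, x_i \star$ annihilates $L$, we have $\hbar\,\partial_{t_i} L = -\, x_i \star L$. Combined with $L \cdot J^{F\ell_n} = \mathbf{1}$, differentiation gives $\hbar\,\partial_{t_i} J^{F\ell_n} = L^{-1}(x_i)$; equivalently, as operators, $\hbar\,\partial_{t_i} \circ L^{-1} = L^{-1} \circ (x_i \star)$. Iterating this identity and tracking the non-trivial commutators $[\hbar\,\partial_{t_i},\,q_j] = \hbar(\delta_{i,j} - \delta_{i,j+1})\,q_j$ (arising from $q_j = e^{t_j - t_{j+1}}$), one shows inductively that, for any polynomial $P(x,q)$ written in the prescribed ordering ($x$-variables preceding $q$-variables, cf.~\cref{notaconv}),
\[
P\!\left(\hbar\,\tfrac{\partial}{\partial t},\; q\right)\! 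J^{F\ell_n}(t,\hbar) \;=\; L^{-1}(t,\hbar)\bigl( P(x, q) \star \mathbf{1} \bigr).
\]
Setting $P = E_i^n$, the right-hand side equals $L^{-1}(0) = 0$ by the ring presentation, yielding the desired annihilation $D_i^n(\hbar\,\partial/\partial t, q)\, J^{F\ell_n}(t,\hbar) = 0$.

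The main obstacle is the operator-ordering bookkeeping in the inductive step. Because $\hbar\,\partial_{t_i}$ and $q_j$ do not commute, verifying that the quantization of $P(x,q)$ (in the fixed order) matches the iterated action $L^{-1}\!\circ (P(x,q)\star)$ requires a careful induction on the degree of $P$, with the commutator corrections on each side matching. The tridiagonal structure of $M_{F\ell_n}$ is helpful here: the Toda integrability ensures that the correction terms combine systematically. Alternatively, one could bypass the intertwining formalism and instead verify the differential equations termwise using the explicit BCK formula for $J^{F\ell_n}$ stated just above, reducing the problem to a family of rational-function identities in the summation indices $\{d_{i,j}\}$, which is closer in spirit to the original proof strategy of Givental-Kim.
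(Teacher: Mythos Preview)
The paper does not prove this proposition; it is quoted from \cite{GiKi}. Your proposed argument is on the right track but contains a genuine gap. The intertwining identity
\[
P\!\left(\hbar\,\partial_{t},\,q\right) J^{F\ell_n}=L^{-1}\bigl(P(x,q)\star\mathbf{1}\bigr)
\]
that you assert for arbitrary $P$ is \emph{false} in general: already for $P=x_1q_1$ one computes $(\hbar\partial_{t_1})(q_1 J)=\hbar q_1 J + q_1 L^{-1}(x_1)$, whereas $L^{-1}(q_1x_1)=q_1L^{-1}(x_1)$, and the extra $\hbar q_1 J$ has nothing to cancel against on the right (in $QH^*$ the operators $x_i\star$ and scalar multiplication by $q_j$ genuinely commute, so there are no ``commutator corrections on each side'' to match). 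What actually makes the identity hold for $P=E_i^n$ is a specific combinatorial feature of the tridiagonal determinant that you only gesture at: in every monomial of $E_i^n(x,q)$, whenever $q_l$ occurs, neither $x_l$ nor $x_{l+1}$ does, because the $2\times2$ off-diagonal block contributing $q_l$ uses up rows and columns $l$ and $l+1$. Consequently each $\hbar\partial_{t_j}$ appearing in a quantized monomial commutes with each $q_l$ in that same monomial, and the corrections vanish term by term. Without this observation your ``careful induction'' is not a proof, and the appeal to ``Toda integrability'' does not supply it.

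There is also a circularity issue. In \cite{GiKi} the relations $E_i^n(x,q)=0$ in $QH^*(F\ell_n)$ are \emph{deduced from} the very differential equations you are proving, via the mechanism recorded here as \cref{QDE}. If you want to use the ring presentation as an input, you must invoke an independent derivation of it (for instance via the quantum Monk formula as in \cite{FGP}); otherwise the argument is logically empty.
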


 \begin{defn}\label{def: TdSd}
     Viewing $q_j=e^{t_j-t_{j+1}}$, we define the  operators
    \begin{equation*}
        \mathbb{T},\mathbb{S},\mathbb{P}_{0}: \mathcal{H}(F\ell_n) \longrightarrow \mathcal{H}(X),
    \end{equation*}
    $$  \mathbb{T}(e^{\frac{t}{\hbar}}\sum_{d \geq 0} q^{d} f_{d}(x,\hbar)):= e^{\frac{\iota^{*}t}{\hbar}}   \sum_{d \geq 0} q^{d} f_{d}(\iota^{*}x,\hbar)\prod_{k=1}^{\int_{d} \sigma^{s_{n-1}}}(-\iota^{*}x_n+k\hbar),$$
    $$\mathbb{S}(e^{\frac{t}{\hbar}}\sum_{d \geq 0} q^{d} f_{d}(x,\hbar) ):= e^{\frac{\iota^{*}t}{\hbar}} \sum_{d \geq 0} q^{d} f_{d}(\iota^{*}x,\hbar)\prod_{k=1}^{(\int_{d} \sigma^{s_{n-1}})-1}(-\iota^{*}x_n+k\hbar),$$
    $$\mathbb{P}_0(e^{\frac{t}{\hbar}}\sum_{d \geq 0} q^{d} f_{d}(x,\hbar) ):= e^{\frac{\iota^{*}t}{\hbar}} \sum_{d \geq 0,d_{n-1}=0} q^{d} f_{d}(\iota^{*}x,\hbar),$$
  where    {the products $\prod\limits_{k=1}^0$ and  $\prod\limits_{k=1}^{-1}$ are read off as multiplication by $1$.}
 \end{defn}

  \begin{lemma} \label{lemma1} As operators from  $\mathcal{H}(F\ell_n)$ to  $\mathcal{H}(X)$,  we have
  $$  [\hbar \frac{\partial}{\partial t_k}, \mathbb{T}]= [\hbar \frac{\partial}{\partial t_k}, \mathbb{S}]=[\hbar \frac{\partial}{\partial t_k}, \mathbb{P}_0]=[q_l, \mathbb{T}]=[q_l, \mathbb{S}]=[q_l, \mathbb{P}_0]=0$$ for any $1 \leq k \leq n$ and $1 \leq l \leq n-2.$ Moreover, we have
  \begin{equation}\label{operatoridentity}
      -\hbar\frac{\partial}{\partial t_n}\circ q_{n-1}\circ \mathbb{T}= \mathbb{T} \circ q_{n-1},\quad
       q_{n-1}\circ \mathbb{T}= \mathbb{S} \circ q_{n-1},\quad
        {\mathbb{T}=- \mathbb{S} \circ \hbar \frac{\partial}{\partial t_n}+(1+\iota^{*}x_n) \mathbb{P}_{0},}
  \end{equation}
as operators acting on $e^{t/\hbar } \sum_{d \geq 0} q^{d} f_{d}(x,\hbar)$ where   $f_d$ are independent of $t_i$.
  \begin{proof}
      Note that $\int_{d} \sigma^{s_{n-1}}=d_{n-1}$ and that the operators $\mathbb{T}$ and $\mathbb{S}$ are affected only by the power $d_{n-1}$ of $q_{n-1}=e^{t_{n-1}-t_{n}}$.
      Thus the identities in the first half of the statement hold.

       \begin{align*}
           q_{n-1}\circ \mathbb{T}(e^{t/\hbar } \sum_{d \geq 0} q^{d} f_{d}(x,\hbar))
           =\,&e^{\iota^{*}t/\hbar } \sum_{d=(d_1,\cdots,d_{n-1}),d_{n-1} \geq 1} q^{d} g_{d}(\iota^{*}x,\hbar)\prod_{k=1}^{d_{n-1}-1}(-\iota^{*}x_n+k\hbar) \\
           =\,&\mathbb{S} \circ q_{n-1}(e^{t/\hbar } \sum_{d \geq 0} q^{d} f_{d}(x,\hbar)),
       \end{align*}
        where $g_{d}(\iota^{*}x,\hbar)=f_{(d_1,\cdots, d_{n-2}, d_{n-1}-1)}(\iota^{*}x,\hbar)$. Thus the second identity in \cref{operatoridentity} holds.
    Denoting  $g_{d}(x,\hbar)=f_{(d_1,\cdots, d_{n-2},d_{n-1}-1)}(x,\hbar)$, we have
    \begin{align*}
           \mathbb{T} \circ q_{n-1}(e^{t/\hbar } \sum_{d \geq 0} q^d f_{d}(x,\hbar))
           =\,& \mathbb{T} (e^{t/\hbar } \sum_{d=(d_1,\cdots,d_{n-1}),d_{n-1} \geq 1} q^d g_{d}(x,\hbar))    \\
           =\,& e^{\iota^{*}t/\hbar } \sum_{d=(d_1,\cdots,d_{n-1}),d_{n-1} \geq 1} q^d g_{d}(\iota^{*}x,\hbar)\prod_{k=1}^{d_{n-1}}(-\iota^{*}x_n+k\hbar)\\
           =\,&-\hbar\frac{\partial}{\partial t_n}\circ q_{n-1}\circ \mathbb{T}(e^{t/\hbar } \sum_{d \geq 0} q^d f_{d}(x,\hbar)).
       \end{align*}
    Therefore the first identity in \cref{operatoridentity} holds as well.

    {The last identity in \cref{operatoridentity} follows directly from the definition of the operators $\mathbb{S},\mathbb{T},\mathbb{P}_0$.  Precisely,
\begin{align*}
           &-\mathbb{S} \circ \hbar \frac{\partial}{\partial t_{n}} (e^{t/\hbar } \sum_{d \geq 0} q^{d} f_{d}(x,\hbar))  \\
           =& (-\iota^{*}x_n) \cdot e^{\iota^{*}t/\hbar } \sum_{d \geq 0, d_{n-1}=0} q^{d} f_{d}(\iota^{*}x,\hbar)+ (-\iota^{*}x_n+d_{n-1} \hbar ) \cdot e^{\iota^{*}t/\hbar } \sum_{d \geq 0, d_{n-1}\geq 1} q^{d} f_{d}(\iota^{*}x,\hbar)   \\
           =&\mathbb{T}(e^{t/\hbar } \sum_{d \geq 0} q^{d} f_{d}(x,\hbar)) -(1+\iota^{*}x_n)\mathbb{P}_0( e^{t/\hbar } \sum_{d \geq 0} q^{d} f_{d}(x,\hbar)).
       \end{align*}
    }
   \end{proof}

\end{lemma}

\begin{prop} \label{J_hatrel}
      For the    quantizations $D_{i}^n(\hbar{\partial\over \partial t_{[1, n]}}, q_{[1, n-1]})$  of $E^n_i(x, q)$,
     \begin{align*}
         D_{i}^n(\hbar \frac{\partial}{\partial t_{[1,n]}}, q_{[1, n-2]},  (-\hbar \frac{\partial}{\partial t_n}) \circ q_{n-1})(I_{F\ell_n,X}(\iota^{*}t,\hbar))=0
     \end{align*}
 holds for  $1\leq i\leq n-1$.  Moreover,   we have
    \begin{align}
        (-D_{n-1}^{n-1} +  D_{n-2}^{n-2} q_{n-1})(I_{F\ell_n,X}(\iota^{*}t,\hbar))=0. \label{last_rel}
    \end{align}
\end{prop}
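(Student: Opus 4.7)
The plan is to reduce both identities to \cref{qhFl_n} by exploiting the identification $I_{F\ell_n,X}(\iota^*t,\hbar) = \mathbb{T}\bigl(J^{F\ell_n}(t,\hbar)\bigr)$, which is immediate from the defining formula of $\mathbb{T}$ and the closed form of Givental's $J$-function of $F\ell_n$. Both operators on the left-hand sides can then be pushed across $\mathbb{T}$ (and, for the second identity, further across $\mathbb{S}$) using the commutation identities in \cref{lemma1} until they are recognized as honest quantum differential operators for $F\ell_n$ applied to $J^{F\ell_n}$.

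For the first identity, the workhorse is the relation $(-\hbar\frac{\partial}{\partial t_n})\circ q_{n-1}\circ \mathbb{T} = \mathbb{T}\circ q_{n-1}$ from \cref{lemma1}. Iterating this $d_{n-1}$ times yields $\bigl((-\hbar\frac{\partial}{\partial t_n})q_{n-1}\bigr)^{d_{n-1}}\mathbb{T} = \mathbb{T}\, q_{n-1}^{d_{n-1}}$. Combined with the trivial commutations $[\hbar\frac{\partial}{\partial t_k},\mathbb{T}]=0$ for $1\leq k\leq n$ and $[q_l,\mathbb{T}]=0$ for $1\leq l\leq n-2$, the operator obtained from $D_i^n$ by the substitution $q_{n-1}\mapsto (-\hbar\frac{\partial}{\partial t_n})q_{n-1}$, applied to $\mathbb{T}\, J^{F\ell_n}$, equals $\mathbb{T}\circ D_i^n(\hbar\frac{\partial}{\partial t},q)$ applied to $J^{F\ell_n}$, which vanishes by \cref{qhFl_n}.

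For the second identity, the essential algebraic input is the cofactor expansion of $\det(I_n+\lambda M_{F\ell_n})$ along the last row:
\[
\det(I_n+\lambda M_{F\ell_n}) = (1+\lambda x_n)\det(I_{n-1}+\lambda M_{F\ell_{n-1}}) + \lambda^2 q_{n-1}\det(I_{n-2}+\lambda M_{F\ell_{n-2}}).
\]
Extracting the coefficient of $\lambda^n$ gives $E_n^n = x_n E_{n-1}^{n-1} + q_{n-1} E_{n-2}^{n-2}$, and this classical identity quantizes without $\hbar$-corrections since $\hbar\frac{\partial}{\partial t_n}$ commutes with $D_{n-1}^{n-1}$ and $q_{n-1}$ commutes with $D_{n-2}^{n-2}$; hence $D_n^n = \hbar\frac{\partial}{\partial t_n}\, D_{n-1}^{n-1} + D_{n-2}^{n-2}\, q_{n-1}$. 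Now I use the remaining identities from \cref{lemma1}, namely $\mathbb{T} = -\mathbb{S}\circ \hbar\frac{\partial}{\partial t_n}$ and $q_{n-1}\circ \mathbb{T}=\mathbb{S}\circ q_{n-1}$, together with the commutativity of $\mathbb{S}$ with both $D_{n-1}^{n-1}$ and $D_{n-2}^{n-2}$, to compute
\[
(-D_{n-1}^{n-1}+D_{n-2}^{n-2} q_{n-1})\,\mathbb{T}\, J^{F\ell_n} = \mathbb{S}\bigl(\hbar\tfrac{\partial}{\partial t_n}\, D_{n-1}^{n-1} + D_{n-2}^{n-2}\, q_{n-1}\bigr) J^{F\ell_n} = \mathbb{S}\, D_n^n\, J^{F\ell_n} = 0.
\]

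The main obstacle is bookkeeping rather than conceptual: I must verify carefully that the ordering convention for the quantization (all $\hbar\frac{\partial}{\partial t}$'s to the left of all $q$'s) is preserved under each commutation move, and that the matrix expansion lifts cleanly to the noncommutative setting, which amounts to checking case by case that the derivations and quantum parameters genuinely commute with the operators they pass. Once these compatibilities are in hand, the proof is a one-line collapse onto \cref{qhFl_n}.
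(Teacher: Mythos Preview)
Your proof is correct and follows essentially the same approach as the paper: identify $I_{F\ell_n,X}$ with $\mathbb{T}(J^{F\ell_n})$, push the modified operators through $\mathbb{T}$ and $\mathbb{S}$ using \cref{lemma1}, and collapse onto \cref{qhFl_n} via the cofactor identity $D_n^n = (\hbar\frac{\partial}{\partial t_n})D_{n-1}^{n-1} + D_{n-2}^{n-2}\,q_{n-1}$. The only minor remark is that iteration of the relation $(-\hbar\frac{\partial}{\partial t_n})q_{n-1}\mathbb{T} = \mathbb{T}\,q_{n-1}$ is unnecessary, since $q_{n-1}$ appears at most linearly in each $E_i^n$.
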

\begin{proof}
   It follows directly from the  definition that  $\mathbb{T}(J^{F\ell_n}(t,\hbar))=I_{F\ell_n,X}(\iota^{*}t,\hbar)$. By \cref{qhFl_n} and \cref{lemma1}, for $1\leq i\leq n-1$, we obtain
    \begin{align*}
        D_{i}^n(\hbar \frac{\partial}{\partial t_{[1,n]}}, q_{[1, n-2]},  (-\hbar \frac{\partial}{\partial t_n}) \circ q_{n-1})(\mathbb{T}(J^{F\ell_n}(t,\hbar)))
        =\,&\mathbb{T} \circ  D_{i}^n(\hbar \frac{\partial}{\partial t}, q)(J^{F\ell_n}(t,\hbar))=0.
    \end{align*}
 By \cref{lemma1}, all  $q_{l}, \frac{\partial}{\partial t_{k}}$ commute with   $\mathbb{T},\mathbb{S}, \mathbb{P}_0$    for  $ 1\leq l \leq n-2, 1\leq k \leq n-1$. It follows that   $[D_{a}^{a}, \mathbb{S}]=0$, $[D_a^a,\mathbb{T}]=0$ and $[D_a^a,\mathbb{P}_0]=0$ for $a\in\{n-1, n-2\}$. Using the identities in \cref{operatoridentity},
 we have
     \begin{align*}
       &    (-D_{n-1}^{n-1} +  D_{n-2}^{n-2} q_{n-1})(I_{F\ell_n,X}(\iota^{*}t,\hbar))\\ =\,&(-D_{n-1}^{n-1} +  D_{n-2}^{n-2} q_{n-1})(\mathbb{T}(J^{F\ell_n}(t,\hbar)))\\
   =\, &(  D_{n-1}^{n-1} \circ (\mathbb{S} \circ \hbar \frac{\partial}{\partial t_n}-(1+\iota^{*}x_n)\mathbb{P}_0 )  + D_{n-2}^{n-2} \circ \mathbb{S} \circ q_{n-1})(J^{F\ell_n}(t,\hbar)) \\
         =\,& (\mathbb{S} \circ (\hbar \frac{\partial}{\partial t_n}) \circ D_{n-1}^{n-1} + \mathbb{S}  \circ D_{n-2}^{n-2} \circ q_{n-1}-(1+\iota^{*}x_n)\mathbb{P}_0  \circ D_{n-1}^{n-1} )(J^{F\ell_n}(t,\hbar)) \\
         =\,&\mathbb{S}(D_{n}^n(J^{F\ell_n}(t,\hbar)))-(1+\iota^{*}x_n)\mathbb{P}_0  \circ D_{n-1}^{n-1}(J^{F\ell_n}(t,\hbar)) \\
         =\,&-(1+\iota^{*}x_n)\mathbb{P}_0  \circ D_{n-1}^{n-1}(J^{F\ell_n}(t,\hbar)) .
     \end{align*}
 Here the fourth equality holds by noting $D_n^n=(\hbar {\partial \over \partial t_n}) D_{n-1}^{n-1} +  D_{n-2}^{n-2} q_{n-1}$ and $D_{n}^{n}(J^{F\ell_n})=0$.
 Write $D_{n-1}^{n-1}(J^{F\ell_n}(t,\hbar))=\sum_{i \geq 0} q_{n-1}^{i} A_{i}  $, where each $A_i$ is independent of $q_{n-1}$. Then it follows from  $0=D_{n}^{n}(J^{F\ell_n})=(\hbar {\partial \over \partial t_n} \circ D_{n-1}^{n-1} +  D_{n-2}^{n-2} \circ q_{n-1})(J^{F\ell_n} )$    that $\hbar {\partial \over \partial t_n}(A_0)=x_{n}   A_0=0. $
By the projection formula, we have $\int_{X} 1_X\cup \iota^{*}(a\cup b)=\int_{F\ell_n} \iota_*(1_X) \cup (a\cup b)=\int_{F\ell_n} (-x_n) \cup  a \cup b$.
Thus if $x_n\cup a=0\in H^*(F\ell_n)$, then $\iota^*a=0\in H^*(X)$, following from the surjectivity of $\iota^*$. Conseqeuently,  we have $\mathbb{P}_0  \circ D_{n-1}^{n-1}(J^{F\ell_n}(t,\hbar))=\iota^{*} A_0=0$.
\end{proof}

 \begin{lemma} \label{J and J_hat}
  Both of the following identities hold.
    $$ (\hbar \frac{\partial}{\partial t_{n-1}}+q_{n-1})J^X(\iota^{*}t,\hbar)= e^{-\frac{q_{n-1}}{\hbar}} (\hbar \frac{\partial}{\partial t_{n-1}}) I_{F\ell_n,X}(\iota^{*}t,\hbar),$$
     $$ (\hbar \frac{\partial}{\partial t_n}-q_{n-1})J^{X}(\iota^{*}t,\hbar)= e^{-\frac{q_{n-1}}{\hbar}} (\hbar \frac{\partial}{\partial t_n}) I_{F\ell_n,X}(\iota^{*}t,\hbar).$$
\end{lemma}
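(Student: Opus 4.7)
The plan is to prove both identities as a direct consequence of \Cref{prop:JfunX} and the Leibniz rule, using the fact that $q_{n-1}=e^{t_{n-1}-t_n}$. The starting point is the factorization
\[
J^{X}(\iota^{*}t,\hbar)=e^{-q_{n-1}/\hbar}\,I_{F\ell_n,X}(\iota^{*}t,\hbar).
\]
Since $I_{F\ell_n,X}$ is the only factor that depends on $\iota^*t$ through $x$-variables, while $e^{-q_{n-1}/\hbar}$ depends on $(t_{n-1},t_n)$ only through $q_{n-1}$, the entire computation reduces to differentiating $e^{-q_{n-1}/\hbar}$ with respect to $t_{n-1}$ and $t_n$.

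First I would apply $\hbar\partial/\partial t_{n-1}$ to both sides of the factorization, using the product rule. The key observation is that $\partial q_{n-1}/\partial t_{n-1} = q_{n-1}$, which yields
\[
\hbar\tfrac{\partial}{\partial t_{n-1}}\big(e^{-q_{n-1}/\hbar}\big) = -q_{n-1}\,e^{-q_{n-1}/\hbar}.
\]
Plugging this into the Leibniz expansion gives
\[
\hbar\tfrac{\partial}{\partial t_{n-1}} J^{X} = -q_{n-1} J^{X} + e^{-q_{n-1}/\hbar}\,\hbar\tfrac{\partial}{\partial t_{n-1}} I_{F\ell_n,X},
\]
which after moving the $-q_{n-1}J^X$ term to the left side is exactly the first identity.

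For the second identity I would carry out the same calculation with $t_n$ in place of $t_{n-1}$. This time $\partial q_{n-1}/\partial t_n = -q_{n-1}$, which produces the sign flip
\[
\hbar\tfrac{\partial}{\partial t_n}\big(e^{-q_{n-1}/\hbar}\big) = +q_{n-1}\,e^{-q_{n-1}/\hbar},
\]
so that
\[
\hbar\tfrac{\partial}{\partial t_n} J^{X} = q_{n-1} J^{X} + e^{-q_{n-1}/\hbar}\,\hbar\tfrac{\partial}{\partial t_n} I_{F\ell_n,X},
\]
which rearranges to the claimed equation. There is no substantive obstacle here: the lemma is really an unwinding of the mirror-type transformation in \Cref{prop:JfunX} via the chain rule, and its sole purpose is to package the two commutation identities needed to pass the differential equations of \Cref{J_hatrel} from $I_{F\ell_n,X}$ down to $J^{X}$ in the next step of Section 3.
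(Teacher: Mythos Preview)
Your proof is correct and follows essentially the same approach as the paper: both invoke \cref{prop:JfunX} for the factorization $J^{X}=e^{-q_{n-1}/\hbar}I_{F\ell_n,X}$ and then apply the Leibniz rule, using that $q_{n-1}=e^{t_{n-1}-t_n}$ to differentiate the exponential prefactor. The paper's proof is slightly terser but the logic is identical.
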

\begin{proof} By  \cref{prop:JfunX},  $J^{X}(\iota^{*}t,\hbar)=e^{-\frac{q_{n-1}}{\hbar}} I_{F\ell_n,X}(\iota^{*}t,\hbar)$. Then the statement follows from direct calculations by the  Leibniz rule:
     \begin{align*}
         (\hbar \frac{\partial}{\partial t_{n-1}})(J^{X})&=(-q_{n-1})e^{-\frac{q_{n-1}}{\hbar}} I_{F\ell_n,X}+e^{-\frac{q_{n-1}}{\hbar}}(\hbar \frac{\partial}{\partial t_{n-1}})(I_{F\ell_n,X}), \\
       (\hbar \frac{\partial}{\partial t_{n}})(J^{X})&=q_{n-1}e^{-\frac{q_{n-1}}{\hbar}} I_{F\ell_n,X}+e^{-\frac{q_{n-1}}{\hbar}}(\hbar \frac{\partial}{\partial t_{n}})(I_{F\ell_n,X}).
     \end{align*}
\end{proof}

\subsection{Proofs of \cref{thm: QHX} and \cref{thm: ringhom}}
As shown in \cite[Proposition 2.2]{SiTi} by  Siebert and Tian, the quantum cohomology  $QH^*(X)$ of the Fano manifold $X$ is of the form $\bbC[x, q]/I_q$, provided that $H^*(X)=\bbC[x]/I$ and $I_q$ is generated by the corresponding quantized relations in   $QH^*(X)$ of the generators of $I$ for $H^*(X)$. Such relations can be found, by   the following  standard method due to Givental.
  \begin{prop}[\!\!\protect{\cite[Corollary 6.4]{Giv}}] \label{QDE}
  If a differential operator $P(\hbar \frac{\partial}{\partial{t_i}}, e^{t_i-t_{i+1}},\hbar)$ satisfies the equation  $ P(\hbar \frac{\partial}{\partial{t_i}}, e^{t_i-t_{i+1}},\hbar)(J^{Y}(t,\hbar))=0$, then $P(x_i,q_i,0)=0$ in  $QH^*(Y)$.
\end{prop}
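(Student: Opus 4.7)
The plan is to exploit the fundamental solution $L(t,\hbar)$ of the quantum connection to conjugate the quantization $P(\hbar\partial/\partial t, q, \hbar)$ into an operator built from quantum multiplication, and then extract the classical limit $\hbar=0$ to obtain the desired relation in $QH^*(Y)$. The first step is to derive the key operator identities
\begin{equation*}
\hbar \frac{\partial}{\partial t_i}\circ L^{-1} = L^{-1}\circ \Bigl(x_i\star + \hbar\frac{\partial}{\partial t_i}\Bigr),\qquad q_j\circ L^{-1} = L^{-1}\circ q_j,
\end{equation*}
by differentiating $L L^{-1}=\mathrm{Id}$ and using the defining equation $\hbar\partial L/\partial t_i = -(x_i\star) L$ of the flat section $L$. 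Flatness of the quantum connection further ensures that the shifted operators $x_i\star + \hbar\partial/\partial t_i$ mutually commute, so no ordering ambiguity arises among them.

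Next I would iterate these identities to obtain, for any polynomial $P$ written in the prescribed order,
\begin{equation*}
P\Bigl(\hbar\tfrac{\partial}{\partial t}, q, \hbar\Bigr)\circ L^{-1} = L^{-1}\circ P\Bigl(x\star + \hbar\tfrac{\partial}{\partial t}, q, \hbar\Bigr).
\end{equation*}
Applying both sides to $\mathbf{1}$ and using $J^Y = L^{-1}(\mathbf{1})$ yields
\begin{equation*}
P(\hbar\partial/\partial t, q, \hbar)J^Y = L^{-1}\bigl[P(x\star + \hbar\partial/\partial t, q, \hbar)(\mathbf{1})\bigr].
\end{equation*}
Now $\hbar\partial/\partial t_i$ applied to any monomial $q^d$ produces an explicit factor of $\hbar$, so expanding the right-hand side in powers of $\hbar$ shows that its $\hbar^0$-coefficient collapses to $P(x,q,0)\star\mathbf{1} = P(x,q,0)$ viewed as an element of $QH^*(Y)$.

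Finally, if the hypothesis $P(\hbar\partial/\partial t,q,\hbar)J^Y = 0$ holds, invertibility of $L^{-1}$ forces $P(x\star + \hbar\partial/\partial t, q, \hbar)(\mathbf{1}) = 0$ in $H^*(Y)\otimes \mathbb{C}[[q,\hbar]]$; reading off the $\hbar^0$-coefficient yields $P(x,q,0) = 0$ in $QH^*(Y)$. The main obstacle is the careful verification of the propagation identity in the second step. The key point is that $[\hbar\partial/\partial t_i, q_j] = \hbar(\delta_{ij}-\delta_{i,j+1})q_j$ while $[x_i\star, q_j]=0$, so $[x_i\star + \hbar\partial/\partial t_i, q_j]$ exactly matches $[\hbar\partial/\partial t_i, q_j]$; thus rearranging to the prescribed order on both sides produces identical corrections, and combined with the mutual commutativity of $\{x_i\star + \hbar\partial/\partial t_i\}$ coming from flatness, the induction on the monomial degree of $P$ goes through cleanly.
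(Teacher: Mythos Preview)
The paper does not give its own proof of this proposition: it is quoted verbatim as \cite[Corollary 6.4]{Giv} and used as a black box. Your argument is the standard Givental proof and is correct. The intertwining identities $\hbar\partial_{t_i}\circ L^{-1}=L^{-1}\circ(x_i\star+\hbar\partial_{t_i})$ and $q_j\circ L^{-1}=L^{-1}\circ q_j$, together with flatness (so the $x_i\star+\hbar\partial_{t_i}$ commute) and the matching of $[\hbar\partial_{t_i},q_j]$ with $[x_i\star+\hbar\partial_{t_i},q_j]$, let you push $L^{-1}$ through any ordered monomial; applying to $\mathbf{1}$ and taking the $\hbar^0$-coefficient finishes. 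So there is nothing to compare against in the paper itself, and your write-up supplies exactly the argument that the cited reference contains.
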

We will first describe $QH^*(X)$, and then prove \cref{thm: QHX} as a consequence, by using  \cite[Proposition 11]{FuPa}, which is  a variation of \cite[Proposition 2.2]{SiTi}.

\begin{thm}\label{relationsQHX}
Let  $\chi_{j}:=\chi_{j}(\iota^{*}x,q)$ be given by $\det(I_n + \lambda \tilde M_X)=\sum_{i=0}^n\chi_i\lambda^i$ with respect to the matrix
\begin{align*}
    \tilde M_{X}=\begin{pmatrix}
    \iota^{*}x_1 &  q_1 & \\
     -1 & \iota^{*}x_2 & q_2  \\
     &  -1   &\iota^{*}x_3 & q_3\\
      &  & \ddots    &\ddots & \ddots\\
      &    &   & -1   &\iota^{*}x_{n-1}+q_{n-1} & q_{n-1}(-\iota^{*}x_{n}+q_{n-1}) \\
      &    &      &   & -1 & \iota^{*}x_{n}-q_{n-1}
  \end{pmatrix}.
\end{align*}
Then $\chi_{n}=(\iota^{*}x_n-q_{n-1})E_{n-1}^{n-1}(\iota^*x_1, \cdots, \iota^*x_{n-1}, q_1, \cdots, q_{n-2})$.
Furthermore, the quantum cohomology ring of   $X$ is canonically given by
    \begin{align*}
      QH^{*}(X)=\mathbb{C}[\iota^{*}x_1,\cdots,\iota^{*}x_n,q_1,\cdots,q_{n-1}]\left/\big(\chi_1,\chi_2,\cdots, \chi_{n-1}, {\chi_{n}\over \iota^{*}x_n-q_{n-1}}\big)\right. .
\end{align*}
\end{thm}
\begin{proof}
    By  \cref{J_hatrel} and \cref{J and J_hat}, for any $1\leq i \leq n-1$, we have
    \begin{align*}
        0=\,&e^{-\frac{q_{n-1}}{\hbar}}D_{i}^n(\hbar \frac{\partial}{\partial t_{[1,n]}}, q_{[1, n-2]}, (-\hbar \frac{\partial}{\partial t_n}) \circ q_{n-1})(I_{F\ell_n,X}) \\
         =\,& D_{i}^n(\hbar \frac{\partial}{\partial t_{[1,n-2]}},  \hbar \frac{\partial}{\partial t_{n-1}}+q_{n-1},\hbar \frac{\partial}{\partial t_n}-q_{n-1},  q_{[1, n-2]},   (-(\hbar \frac{\partial}{\partial t_n}-q_{n-1}) \circ q_{n-1})+\hbar G_1+\hbar^{2}G_2 )\bigg)(J^{X})
    \end{align*}
    where $G_1=G_1(\hbar\frac{\partial}{\partial t}, q)$ and $G_2=G_2(\hbar\frac{\partial}{\partial t}, q)$ are  differential operators. Therefore by using \cref{QDE}, we have
    $$D_{i}^n(\iota^{*}x_1,\cdots,\iota^{*}x_{n-2}, \iota^{*}x_{n-1}+q_{n-1},\iota^{*}x_n-q_{n-1} ,q_1,q_2,\cdots, q_{n-2}, q_{n-1}(-\iota^{*}x_n+q_{n-1}))=0$$ in $QH^{*}(X).$  That is, $\chi_{i}(\iota^{*}x,q)=0$ holds in $QH^*(X)$.

  The next two equalities follow directly from   \cref{J and J_hat} and  \cref{J_hatrel} respectively.
  \begin{align*}
      &\big(-D_{n-1}^{n-1}(\hbar \frac{\partial}{\partial t_{[1, n-2]}},\hbar \frac{\partial}{\partial t_{n-1}}+q_{n-1},q_{[1, n-2]})+D_{n-2}^{n-2}(\hbar \frac{\partial}{\partial t_{[1, n-2]}},q_{[1, n-2]})q_{n-1}+\hbar H_1 \big)(J^{X}) \\
      =\,&e^{-\frac{q_{n-1}}{\hbar}}\big(-D_{n-1}^{n-1}(\hbar \frac{\partial}{\partial t_{[1, n-1]}},q_{[1, n-2]})+D_{n-2}^{n-2}(\hbar \frac{\partial}{\partial t_{[1, n-2]}},q_{[1, n-2]}) q_{n-1}\big)(I_{F\ell_n,X})=0,
  \end{align*}
 where $H_1=H_1(\hbar\frac{\partial}{\partial t}, q)$ is a   differential operator. Therefore, by \cref{QDE}, in $QH^{*}(X)$  we have
 $$-E_{n-1}^{n-1}(\iota^{*}x_1,\cdots,\iota^{*}x_{n-2},\iota^{*}x_{n-1}+q_{n-1},q_1,\cdots,q_{n-2})+E_{n-2}^{n-2}(\iota^{*}x_1,\cdots,\iota^{*}x_{n-2},q_1,\cdots,q_{n-2}) q_{n-1}=0.$$
 The left hand side of the above equality equals  $-E_{n-1}^{n-1}(\iota^{*}x_1,\cdots, \iota^{*}x_{n-1},q_1,\cdots,q_{n-2})
 $ and hence equals  $-{\chi_{n}(\iota^{*}x,q)\over \iota^{*}x_{n}-q_{n-1}}$ by Laplace expansion of matrices.
  Then we are done by using  \cref{prop:HX} and    \cite[Proposition 2.2]{SiTi}.
\end{proof}

Now we are ready to show the ring presentation of the integral quantum cohomology $QH^*(X, \bbZ)$ in \textbf{\cref{thm: QHX}}, as well as the ring homomorphism in \textbf{\cref{thm: ringhom}}.

\bigskip

\begin{proof}[\textbf{Proof of \cref{thm: QHX}}]
    Abusing the notation for $\iota^*x_i$ and $x_i$, we notice
    $M_X=A\tilde M_XA^{-1}$ with $A=I_n+q_{n-1}B_{n-1, n}$, where $B_{n-1, n}$ is the matrix with 1 in the  $(n-1, n)$-entry and zeros elsewhere.
    Hence, the matrices $M_X$ and $\tilde M_X$ have the same characteristic polynomial, implying $\hat E_i^n=\chi_i$ for all $1\leq i\leq n$.
  Again we note  ${\chi_{n} \over  x_{n}-q_{n-1}}=E_{n-1}^{n-1}$. Thus all $\hat E_1^n, \cdots, \hat E_{n-1}^n, E_{n-1}^{n-1}$ vanish in $QH^*(X)=QH^*(X, \bbZ)\otimes \bbC$, so do in $QH^*(X, \bbZ)$.
  Their evaluations at $q=0$ give the ideal $I=(e_1^n, \cdots, e_{n-1}^n, e_{n-1}^{n-1})$, providing the ring presentation $H^*(X, \bbZ)=\bbZ[x_1, \cdots, x_n]/I$ by \cref{prop:HX}. Therefore the statement follows from \cite[Proposition 11]{FuPa}.
\end{proof}

\bigskip

 \begin{proof}[\textbf{Proof of \cref{thm: ringhom}}]
   Define $\iota^*_q(x_i)=x_i$ and $\iota^*_q(q_i)=q_i$ for $1\leq i\leq n-2$. Define $$\iota^*_q(x_{n-1})=x_{n-1}+q_{n-1},\qquad \iota^*_q(x_{n})=x_{n}-q_{n-1},\quad \iota^*_q(q_{n-1})=-q_{n-1}x_n+q_{n-1}^2. $$
This induces a ring homomorphism $\iota^*_q: \bbZ[x, q]\to \bbZ[x, q]$ with
 $$\iota^*_q(E_i^n(x, q))=E_i^n(x_1, \cdots, x_{n-2}, \iota^*_q(x_{n-1}), \iota^*_q(x_{n}), q_1, \cdots, q_{n-2}, \iota^*_q(q_{n-1}))=\chi_i(x, q)$$
 for all $1\leq i\leq n$.
 That is, $\iota^*_q(E_i^n(x, q))=\hat E_i^n(x, q)$ for $1\leq i\leq n-1$, and  $\iota^*_q(E_n^n(x, q))=(x_n-q_{n-1})E_{n-1}^{n-1}(x, q)$ by the proof of \cref{thm: QHX}. Hence it further induces a ring homomorphism
  $$\iota^*_q: QH^*(F\ell_n, \mathbb{Z})={\bbZ[x, q]\over (E^n_1, \cdots, E_n^n)} \longrightarrow {\bbZ[x, q]\over (\hat E_1^n, \cdots, \hat E_{n-1}^n, E_{n-1}^{n-1})}=QH^*(X, \bbZ).$$
  Hence, we are done, by noting  $\sigma^{s_i}=[x_1+\cdots+x_i]$ on the left hand side, and $\xi^{s_i}=[x_1+\cdots+x_i]$ on the right hand side. (In particular, $\xi^{s_{n-1}}=[-x_n]$.)
 \end{proof}

\section{Quantum Monk-Chevalley formula for $X$}\label{Sect-QChevalley}
Let $Y\in\{F\ell_n, X\}$. By $d$ we always mean   $d=(d_1, \cdots, d_{n-1})=\sum_{i=1}^{n-1}d_i[X_{s_i}]\in H_2(Y, \bbZ)$.
For $1\leq a<b\leq n$, we denote
\begin{equation}
    \alpha_{ab}:=\sum_{i=a}^{b-1} [X_{s_i}],\qquad  q_{ab}:= q_aq_{a+1}\cdots q_{b-1}=q^{\alpha_{ab}}.
\end{equation}
The following quantum Monk's formula for $F\ell_n$ was proved in \cite[Theorem 1.3]{FGP} (see \cite{FuWo} for the quantum Chevalley formula   for general  $G/P$), where $\lessdot_r$ (resp. $\lessdot_r^q$) denotes the (quantum) $r$-Bruhat order defined in \cref{kBruhat} (resp.  \cref{quantumkBruhat}).
\begin{prop}[Quantum Monk's formula]\label{prop: quantumMonkFl}
For $u\in S_n$ and $1\leq r<n$, in $QH^*(F\ell_n, \bbZ)$ we have
$$\sigma^{s_r}\star\sigma^u=\sum_{u\lessdot_rut_{ab}}\sigma^{ut_{ab}}+\sum_{u\lessdot_r^q ut_{ck}}q_{ck}\sigma^{ut_{ck}}.$$
\end{prop}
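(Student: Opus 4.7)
The plan is to separate $\sigma^{s_r}\star\sigma^u$ into its degree-zero part and its quantum corrections. The degree-zero component is the classical Monk's formula $\sigma^{s_r}\cup\sigma^u=\sum_{u\lessdot_r ut_{ab}}\sigma^{ut_{ab}}$, which produces the first sum. It therefore suffices to evaluate the three-point Gromov-Witten invariants $\langle\sigma^{s_r},\sigma^u,\sigma_w\rangle_{0,3,d}^{F\ell_n}$ for every effective $d>0$ and show that they assemble into the second sum.

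Since $\sigma^{s_r}\in H^2(F\ell_n)$ is a divisor class, the divisor axiom collapses each such three-point invariant to
$$\langle\sigma^{s_r},\sigma^u,\sigma_w\rangle_{0,3,d}^{F\ell_n}=d_r\langle\sigma^u,\sigma_w\rangle_{0,2,d}^{F\ell_n},$$
which immediately forces $d_r\geq 1$. The virtual dimension of $\overline{\mathcal{M}}_{0,2}(F\ell_n,d)$ in turn forces the length constraint $\ell(w)=\ell(u)+1-2\sum_i d_i$. Since $\ell(t_{ab})=2(b-a)-1$, the only candidates with $d_r\geq 1$ and $w$ of the form $ut_{ab}$ are the path degrees $d=\alpha_{ck}=[X_{s_c}]+\cdots+[X_{s_{k-1}}]$ with $c\leq r<k$ and $u\lessdot_r^q ut_{ck}$. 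At each such $\alpha_{ck}$, one identifies $\langle\sigma^u,\sigma_{ut_{ck}}\rangle_{0,2,\alpha_{ck}}^{F\ell_n}=1$ by a curve neighborhood computation \`a la Buch-Mihalcea: the minimal rational curves of class $\alpha_{ck}$ through a point on $X_u^\circ$ sweep out exactly the Schubert variety $X_{ut_{ck}}$, so generic translates of the relevant complementary Schubert cycles are joined by a unique such curve. Multiplying by $d_r=1$ and summing yields exactly $\sum_{u\lessdot_r^q ut_{ck}}q_{ck}\sigma^{ut_{ck}}$.

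The main obstacle is the vanishing step: showing $\langle\sigma^u,\sigma_w\rangle_{0,2,d}^{F\ell_n}=0$ for every effective $d$ with $d_r\geq 1$ that is not of path type, or for which $w$ is not of the form $ut_{ck}$. The cleanest geometric route is to iterate the curve neighborhood construction, writing $\Gamma_d(X_v)=X_{v(d)}$ with $v(d)$ computed inductively from the quantum Chevalley formula at minimal effective classes; for $d$ outside the path family, the resulting $v(d)$ produces an excessive codimension drop, so the intersection with the complementary Schubert cycle becomes empty. Alternatively, one can deduce the vanishing indirectly from Givental-Kim's Borel-type presentation (\cref{qhFl_n}): the product $\sigma^{s_r}\star\sigma^u$ is determined by its image in $\mathbb{Z}[x,q]/(E_1^n,\ldots,E_n^n)$, and one checks that the candidate right-hand side represents $[(x_1+\cdots+x_r)\mathfrak{S}_u^q]$ there, so the linear independence of the Schubert basis rules out any extra contributions.
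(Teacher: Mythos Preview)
The paper does not prove this proposition at all; it is quoted from \cite[Theorem~1.3]{FGP} (and again, on the polynomial level, from \cite[Theorem~7.1]{FGP}). So there is no in-paper argument to compare against---only the original FGP proof, which is algebraic/combinatorial (they establish the polynomial identity for the $\mathfrak{S}_w^q$ and then identify these with Schubert classes), not the direct moduli-theoretic computation you sketch.

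Your geometric outline is essentially the Fulton--Woodward strategy \cite{FuWo} specialised to type~$A$, and the nonvanishing part is fine: for $d=\alpha_{ck}$ one has $z_d=t_{ck}$, and the curve neighbourhood of the relevant Schubert variety has exactly the required dimension when $u\lessdot^q ut_{ck}$, forcing the two-point invariant to equal~$1$. The vanishing step via curve neighbourhoods can also be made to work, but the actual content is the length estimate $\ell(z_d)\leq 2|d|-1$ with equality if and only if $d=\alpha_{ck}$ for some $c<k$ (the type-$A$ case of the Buch--Mihalcea analysis, and the $F\ell_n$ analogue of what the paper itself does in \cref{lemma:len_zd} for~$X$). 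Your phrase ``excessive codimension drop'' gestures at this but does not state it; without the estimate you have not ruled out, say, $d=2[X_{s_r}]$ or disconnected sums.

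Your second alternative---appealing to the Givental--Kim presentation and checking that the candidate equals $[(x_1+\cdots+x_r)\mathfrak{S}_u^q]$---is circular as written. The ring presentation $QH^*(F\ell_n)\cong\bbZ[x,q]/(E_1^n,\ldots,E_n^n)$ does not by itself tell you which polynomial represents $\sigma^w$; the identification $\Phi_q(\sigma^w)=[\mathfrak{S}_w^q]$ is exactly what FGP prove, and in their development it rests on the quantum Monk formula. If you want a self-contained geometric proof, drop this alternative and supply the length estimate for the curve-neighbourhood route.
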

Since $\{\sigma^u\}_u$ form a  $\bbZ[q]$-basis of $QH^*(F\ell_n, \bbZ)$, we can write
  $$\sigma^{v}\star\sigma^{u}=\sum_{w, d}N_{v, u}^{w, d}q^d\sigma^w.$$
 We further denote by $N_{v, u\cup u'}^{w, d}$
  the coefficient of $q^d\sigma^w$ in the product $\sigma^v\star(\sigma^u\cup  \sigma^{u'})$.
 In particular if $v=s_r$ and $d\neq 0$, then  the classical  Monk's formula expresses $\sigma^u\cup\sigma^{u'}$ as a sum of  distinct $\sigma^{\tilde u}$, which contribute nonzero quantum Schubert structure constants (equal to 1) only if $d=\alpha_{ab}$ and $\tilde u=wt_{ab}$. Therefore, the quantum Monk-Chevalley formula in the form of \cref{thm: QCF} is equivalent to the following description.

\begin{thm}\label{thm: QCFbody}
    Let $1\leq r\leq n-1$ and $u\in S_n$ with $u\leq w_0s_{n-1}$. In $QH^*(X)$, we have
\begin{equation*}
   \xi^{s_r} \star \xi^{u}=\sum_{w \leq w_0s_{n-1}} N_{s_{r},u}^{w,0} \xi^{w}+\sum_{ d_{n-1}=0} N_{s_{r},u}^{w,d} \xi^{w} q^{d} +\sum_{d_{n-1}=1} N_{s_{r},(u \cup s_{n-1})}^{w,d} \xi^{w} q^{d}-\delta_{r,n-1}q_{n-1} \xi^{u}.
\end{equation*}
 \end{thm}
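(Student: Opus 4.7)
The plan is to compute each Gromov--Witten invariant $\langle \xi^{s_r}, \xi^u, \xi_w\rangle^X_{0,3,d}$ geometrically via the closed embedding $\overline{\mathcal{M}}_{0,3}(X,d)\hookrightarrow \overline{\mathcal{M}}_{0,3}(F\ell_n,d)$ induced by $\iota: X\hookrightarrow F\ell_n$, organizing the argument by the value of $d_{n-1}$. Throughout, I would fix general-position translates of opposite Schubert representatives for $\xi^{s_r}$, $\xi^u$, $\xi_w$ so that all relevant intersections in $X$ are transverse, and then identify each $X$-invariant with an appropriate $F\ell_n$-invariant (or show its vanishing).

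The cases $d=0$ and $d\neq 0$ with $d_{n-1}=0$ should be the easy ones. For $d=0$, the classical Monk formula for $F\ell_n$ pulls back under $\iota^*$ to give $\xi^{s_r}\cup\xi^u = \sum_{w\leq w_0 s_{n-1}} N^{w,0}_{s_r,u}\,\xi^w$, since $\iota^*\sigma^w=0$ for $w\not\leq w_0 s_{n-1}$. For $d\neq 0$ with $d_{n-1}=0$, any stable map factors through a single fiber of the projection $pr:F\ell_n\to \mathbb{P}^{n-1}$; the image lies in $X$ precisely when the fiber sits over the hyperplane $\mathbb{P}^{n-2}=\{V \supset F_1\}\subset \mathbb{P}^{n-1}$, and a transversality argument identifies the $X$-invariant with the corresponding $F\ell_n$-invariant, giving $\langle\xi^{s_r},\xi^u,\xi_w\rangle^X_{0,3,d}=N^{w,d}_{s_r,u}$.

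The heart of the argument is the case $d_{n-1}=1$. An irreducible curve $C\subset F\ell_n$ of such a degree meets $X$ transversally in exactly one point, so $C\not\subset X$ unless $d=0$; any stable map to $X$ with $d_{n-1}=1$ is therefore reducible, with its $\mathbb{P}^{n-1}$-degree-$1$ component tracing a line inside $\mathbb{P}^{n-2}$ and the remaining components lying in fibers of $pr$ over $\mathbb{P}^{n-2}$. I would compare $\overline{\mathcal{M}}_{0,3}(X,d)$ with the substack of $\overline{\mathcal{M}}_{0,4}(F\ell_n,d)$ cut out by $ev_4\in X$ (and then forgetting the fourth marked point), which is the moduli-level manifestation of inserting $\sigma^u\cup \sigma^{s_{n-1}}$ at the second marked point. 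A dimension-and-transversality analysis should then match the generic contribution of $\langle\xi^{s_r},\xi^u,\xi_w\rangle^X_{0,3,d}$ with $N^{w,d}_{s_r,\,u\cup s_{n-1}}$, while contributions of reducible nodal configurations (where a component entirely inside $X$ contracts to the unique intersection $C\cap X$) will account for the term $-\delta_{r,n-1}\,\delta_{d,[X_{s_{n-1}}]}\,\delta_{u,w}$, to be identified by a direct analysis of the minimal such configuration at $d=[X_{s_{n-1}}]$.

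For degrees with $d_{n-1}\geq 2$, I plan to prove the vanishing of $\langle\xi^{s_r},\xi^u,\xi_w\rangle^X_{0,3,d}$ using the curve neighborhood technique of Buch--Mihalcea \cite{BuMi}: one computes the two-pointed curve neighborhood of a Schubert variety in $X$ and shows, by dimension estimates, that its intersection with a Schubert representative of $\xi_w$ is empty. The delicate subcase consists of degrees of the form $(0,\ldots,0,1,\ldots,1,2)$, where the generic curve-neighborhood bound is too weak; here I would exploit the $F\ell_{n-1}$-bundle structure of $X$ over $\mathbb{P}^{n-2}$ and the fine geometry of the fibers of $pr$ to rule out the relevant stable maps by direct analysis. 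The main obstacle will be making the boundary analysis in the $d_{n-1}=1$ case fully rigorous---pinning down the sign and multiplicity of the nodal correction and verifying that no further correction arises from higher-order boundary strata---together with executing the geometric argument for the exceptional $d_{n-1}=2$ degrees, both of which require careful tracking of the interplay between the fibration structure of $X$ and the boundary of $\overline{\mathcal{M}}_{0,3}(X,d)$.
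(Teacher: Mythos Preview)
Your treatment of the case $d_{n-1}=1$ rests on a false geometric premise. You assert that an irreducible curve $C\subset F\ell_n$ with $d_{n-1}=1$ meets $X$ in exactly one point and hence cannot lie in $X$, concluding that every stable map to $X$ of such a degree is reducible. This is wrong: under $pr:F\ell_n\to\mathbb{P}^{n-1}$, such a $C$ maps to a line, and whenever that line lies in the hyperplane $H=pr(X)\cong\mathbb{P}^{n-2}$ one has $C\subset X$. Such irreducible curves exist in abundance; in fact they form the generic locus of $\overline{\mathcal{M}}_{0,2}(X,d)$. So the picture ``reducible with the degree-$1$ component tracing a line in $\mathbb{P}^{n-2}$'' is self-contradictory (a line in $\mathbb{P}^{n-2}$ already lies in $X$) and the whole mechanism you propose for producing both the $N^{w,d}_{s_r,u\cup s_{n-1}}$ term and the nodal correction collapses.

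The paper's argument for $d_{n-1}=1$ goes in a different direction. One first shows unobstructedness ($H^1(\mathbb{P}^1,f^*T_X)=0$ for $d_{n-1}\leq 1$), so the virtual class is the actual one; you have not addressed this, and your transversality heuristics do not substitute for it. One then works inside $\overline{\mathcal{M}}_{0,2}(F\ell_n,\iota_*d)$ and decomposes $ev_1^{-1}(X)\cap ev_2^{-1}(X)=\mathcal{A}\cup\mathcal{B}$, where $\mathcal{A}=\iota(\overline{\mathcal{M}}_{0,2}(X,d))$ is the closure of maps with $pr(f(pt_1))\neq pr(f(pt_2))$ (forcing the line into $H$), and $\mathcal{B}$ consists of $F\ell_n$-stable maps with $pr(f(pt_1))=pr(f(pt_2))\in H$. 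Integrating over this decomposition yields $\langle\xi^u,\xi^w\rangle^X_{0,2,d}=\langle\sigma^u\cup\sigma^{s_{n-1}},\sigma^w\cup\sigma^{s_{n-1}}\rangle^{F\ell_n}_{0,2,\iota_*d}-\delta_{q^d,q_{n-1}}\langle\sigma^u,\sigma^w,\sigma^{s_{n-1}}\rangle^{F\ell_n}_{0,3,0}$; the correction $-\delta_{r,n-1}q_{n-1}\xi^u$ comes from $\mathcal{B}$, not from nodal curves in $X$. The passage from these two-point identities to the three-point formula then requires the divisor axiom together with the relation $\iota_*\iota^*\sigma^w=\sigma^w\cup\sigma^{s_{n-1}}$ and a nontrivial bookkeeping showing that the spurious terms $(\sigma^\eta)^\vee$ with $\eta(n)=1$ drop out.

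For $d_{n-1}\geq 2$ your outline matches the paper for generic $d$, but for the exceptional degrees $d=\alpha_{in}+\alpha_{n-1,n}$ the paper does not use the $F\ell_{n-1}$-bundle structure over $\mathbb{P}^{n-2}$; it uses instead the projection $\pi:X\to\mathcal{P}=F\ell_{1,\ldots,n-2;n}$ (forgetting $V_{n-1}$) and a dimension count on $\operatorname{im}(\hat\pi)\subset\overline{\mathcal{M}}_{0,2}(\mathcal{P},\pi_*d)$.
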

 \noindent The constraint $w \leq w_0s_{n-1}$ is a priori required in the second and third sums of the formula $\xi^{s_r} \star \xi^{u}$, but turns out to be redundant (see \cref{lengthQCF}).

This section is devoted to a proof of the above theorem. We use the current form in \cref{thm: QCFbody}, to  indicate our approach:
degree-$d$ Gromov-Witten invariants of $X$ with $d_{n-1}\leq 1$ can be reduced those of $F\ell_{n}$, while those  with $d_{n-1}\geq 2$ vanish.

\subsection{Degree-$d$ Gromov-Witten invariants with $d_{n-1}\leq 1$}
In this subsection, we compute Gromov-Witten  invariants $\langle \beta, \gamma\rangle_{0, 2, d}$ of $X$ with $d_{n-1}\leq 1$.
\subsubsection{Unobstructedness  of moduli spaces}
Recall the line bundle $\mathcal{L}_{\omega_{n-1}}$ over $F\ell_n$ defined in \eqref{vbL}, the zero locus of a section $s$ of which defines  the smooth Schubert divisor $X$.

 \begin{lemma} \label{no_obs}
     Let $f: \mathbb{P}^{1} \rightarrow X$ be a morphism satisfying $f_{*}[\mathbb{P}^{1}]=d$ with $d_{n-1} \leq 1$.  Then we have
     $H^{1}(\mathbb{P}^{1}, f^{*}T_{X})=0.$
 \end{lemma}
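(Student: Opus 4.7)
The plan is to analyze the normal bundle sequence of $X$ in $F\ell_n$ pulled back along $f$. Since $X$ is the zero locus of a section of $\mathcal{L}_{\omega_{n-1}}$ (see \cref{vbL}), the normal bundle satisfies $N_{X/F\ell_n}\cong \iota^{*}\mathcal{L}_{\omega_{n-1}}$, so pulling back by $f$ produces
$$0\longrightarrow f^{*}T_X\longrightarrow (\iota\circ f)^{*}T_{F\ell_n}\longrightarrow f^{*}\iota^{*}\mathcal{L}_{\omega_{n-1}}\longrightarrow 0$$
on $\mathbb{P}^{1}$. Because $c_1(\mathcal{L}_{\omega_{n-1}})=\sigma^{s_{n-1}}$ in $H^{*}(F\ell_n,\mathbb{Z})$, the right-hand term has degree $\int_d \xi^{s_{n-1}} = d_{n-1}$ and is therefore isomorphic to $\mathcal{O}_{\mathbb{P}^{1}}(d_{n-1})$.

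Next, I would use that $F\ell_n$ is homogeneous, so its tangent bundle is globally generated; by Grothendieck's splitting theorem, $(\iota\circ f)^{*}T_{F\ell_n}$ decomposes on $\mathbb{P}^{1}$ as $\bigoplus_i \mathcal{O}_{\mathbb{P}^{1}}(a_i)$ with every $a_i\geq 0$, and in particular $H^{1}(\mathbb{P}^{1},(\iota\circ f)^{*}T_{F\ell_n})=0$. The long exact sequence in cohomology then reduces the desired vanishing to surjectivity of
$$\phi\colon H^{0}\bigl(\mathbb{P}^{1},(\iota\circ f)^{*}T_{F\ell_n}\bigr)\longrightarrow H^{0}\bigl(\mathbb{P}^{1},\mathcal{O}_{\mathbb{P}^{1}}(d_{n-1})\bigr).$$

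To establish surjectivity of $\phi$, the plan is to pick global generators of $(\iota\circ f)^{*}T_{F\ell_n}$ to produce a surjection $\mathcal{O}_{\mathbb{P}^{1}}^{\oplus N}\twoheadrightarrow (\iota\circ f)^{*}T_{F\ell_n}$, and compose with the quotient to obtain a surjection $\mathcal{O}^{\oplus N}\twoheadrightarrow \mathcal{O}(d_{n-1})$. Its kernel is a subbundle of the trivial bundle, so every Grothendieck summand has non-positive degree; the total degree equals $-d_{n-1}\in\{0,-1\}$ under the hypothesis, hence every summand is $\mathcal{O}$ or $\mathcal{O}(-1)$. Both have vanishing $H^{1}$, so the induced map on $H^{0}$ is surjective, and this forces $\phi$ to be surjective as well.

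The sharpness of the assumption $d_{n-1}\leq 1$ is precisely located in this last step: if $d_{n-1}\geq 2$, the kernel could acquire a summand $\mathcal{O}(-2)$ or worse, and $H^{0}$-surjectivity would break down. This aligns with the overall structure of the paper, where the higher $q_{n-1}$-degree contributions demand separate treatment via the curve neighborhood technique, rather than unobstructedness of the moduli space.
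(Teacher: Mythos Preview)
Your argument is correct. Both you and the paper start from the normal bundle sequence $0\to T_X\to T_{F\ell_n}|_X\to \mathcal{L}_{\omega_{n-1}}|_X\to 0$ pulled back along $f$, together with global generation of $T_{F\ell_n}$, but the technical executions diverge. The paper \emph{twists} the pulled-back sequence by $\mathcal{O}_{\mathbb{P}^1}(-1)$ and uses that $H^0(\mathcal{O}(d_{n-1}-1))$ has dimension at most $1$ to bound $\dim H^1((f^*T_X)(-1))\leq 1$; this forces the splitting type of $f^*T_X$ to have at most one summand of degree $-1$ and the rest nonnegative, whence $H^1(f^*T_X)=0$. You instead stay with the untwisted sequence and prove the surjectivity of $H^0((\iota\circ f)^*T_{F\ell_n})\to H^0(\mathcal{O}(d_{n-1}))$ by passing to an auxiliary surjection $\mathcal{O}^{\oplus N}\twoheadrightarrow \mathcal{O}(d_{n-1})$ and analyzing its kernel. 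The paper's twist trick is slicker and yields the splitting type of $f^*T_X$ as a byproduct; your route is more elementary and avoids the back-and-forth translation between $f^*T_X$ and its twist. Either way, the hypothesis $d_{n-1}\leq 1$ enters at exactly the analogous place.
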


 \begin{proof}
    Set $E :=\mathcal{L}_{\omega_{n-1}}|_X \cong N_{X/F\ell_n}$.   The exact sequence $$0 \rightarrow T_{X} \rightarrow T_{F\ell_n}|_{X} \rightarrow E \rightarrow 0, $$
      pulling back to $\mathbb{P}^{1}$, and then tensoring with $\mathcal{O}_{\mathbb{P}^{1}}(-1)$, induces a long exact sequence:
      $$\cdots \rightarrow H^{0}(\mathbb{P}^{1},
      (f^{*}E)(-1)) \rightarrow H^{1}(\mathbb{P}^{1},(f^{*}T_{X})(-1)) \rightarrow H^{1}(\mathbb{P}^{1},(f^{*}T_{F\ell_n}|_{X})(-1)) \rightarrow \cdots  .$$
       Since $T_{F\ell_{n}}$ is globally generated and vector bundles over $\mathbb{P}^{1}$  split,   we have $$(f^{*}T_{F\ell_n}|_{X})(-1)=\bigoplus_{i}\mathcal{O}_{\mathbb{P}^{1}}(a_i-1)$$ with $a_i \geq 0$ for all $i$. Thus $H^{1}(\mathbb{P}^{1},(f^{*}T_{F\ell_n}|_{X})(-1))=0.$
       On the other hand, $(f^{*}E)(-1)=\mathcal{O}_{\mathbb{P}^{1}}(d_{n-1}-1)$ with $d_{n-1} \leq 1$, which implies
       \begin{align} \label{dimres}
           \text{dim} ~H^{1}(\mathbb{P}^{1},(f^{*}T_{X})(-1)) \leq 1.
       \end{align}
      Suppose $(f^{*}T_{X})(-1)=\bigoplus_{i}\mathcal{O}_{\mathbb{P}^{1}}(b_i-1)$,
      \cref{dimres} implies all $b_i \geq 0$ except at most one $b_{i_0}=-1.$ In other words, $f^{*}T_{X}=\bigoplus_{i \neq i_0}\mathcal{O}_{\mathbb{P}^{1}}(b_i) \oplus \mathcal{O}(-1)$ with $b_i \geq 0$ or $f^{*}T_{X}=\bigoplus_{i}\mathcal{O}_{\mathbb{P}^{1}}(b_i) $  with $b_i \geq 0$, which implies $H^{1}(\mathbb{P}^{1}, f^{*}T_{X})=0.$
 \end{proof}

 \begin{prop}
     Let $d\geq 0$ with $d_{n-1} \leq 1$. Then the virtual fundamental class $[\overline{\mathcal{M}}_{0,k}(X,d)]^{\rm vir}$ coincides with the  usual fundamental class  $[\overline{\mathcal{M}}_{0,k}(X,d)]$.
 \end{prop}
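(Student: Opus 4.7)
The plan is to show that the moduli space $\overline{\mathcal{M}}_{0,k}(X,d)$ has a perfect obstruction theory whose obstruction sheaf vanishes at every point, so that the virtual class coincides with the usual fundamental class. At a stable map $(f:C\to X;{\rm pt}_1,\ldots,{\rm pt}_k)$, the obstruction space is (a quotient of) $H^1(C, f^{*}T_{X})$, where $C$ is a tree of $\mathbb{P}^1$'s. So the essential task is to verify $H^1(C, f^{*}T_{X})=0$ for every such stable map. Once this holds, standard deformation theory (see \cite{CoKa}) implies that $\overline{\mathcal{M}}_{0,k}(X,d)$ is smooth of expected dimension and that $[\overline{\mathcal{M}}_{0,k}(X,d)]^{\rm vir}=[\overline{\mathcal{M}}_{0,k}(X,d)]$.

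The previous lemma already takes care of the case of a smooth domain $C=\mathbb{P}^1$. To handle the general nodal tree, I would use the normalization short exact sequence
\begin{equation*}
0\longrightarrow \mathcal{O}_{C}\longrightarrow \nu_{*}\mathcal{O}_{\widetilde{C}}\longrightarrow \bigoplus_{p\in C_{\rm sing}}\mathbb{C}_{p}\longrightarrow 0,
\end{equation*}
where $\nu:\widetilde{C}=\sqcup_{i} C_{i}\to C$ is the normalization and the sum runs over the nodes. Tensoring with the locally free sheaf $f^{*}T_{X}$ and taking cohomology yields a long exact sequence whose relevant portion reads
\begin{equation*}
\bigoplus_{i}H^{1}(C_{i},(f\circ\nu)^{*}T_{X}|_{C_{i}})\longrightarrow H^{1}(C,f^{*}T_{X})\longrightarrow 0,
\end{equation*}
since the skyscraper sheaf has vanishing $H^{1}$. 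Thus it suffices to show that each summand on the left vanishes.

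Writing $f_{i}:=f|_{C_{i}}$ and $d^{(i)}:=(f_{i})_{*}[C_{i}]$, the additivity $d=\sum_{i}d^{(i)}$ of degrees on a connected nodal curve gives $\sum_{i}d^{(i)}_{n-1}=d_{n-1}\leq 1$. Since each $d^{(i)}_{n-1}$ is a non-negative integer, we have $d^{(i)}_{n-1}\leq 1$ for every component. Each $C_{i}$ is isomorphic to $\mathbb{P}^{1}$, so the previous lemma applies verbatim to the morphism $f_{i}:\mathbb{P}^{1}\to X$, yielding $H^{1}(C_{i}, f_{i}^{*}T_{X})=0$ for every $i$. Combining this with the long exact sequence gives $H^{1}(C,f^{*}T_{X})=0$, as required.

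The main obstacle is essentially bookkeeping rather than conceptual: one has to be careful that obstructions to deforming the stable map (including its domain) are controlled by $H^{1}(C,f^{*}T_{X})$ rather than by something larger that also sees deformations of the nodes. This is standard (the tangent-obstruction complex of $\overline{\mathcal{M}}_{0,k}(X,d)$ fits into a triangle whose only obstruction term is the one above, the node-smoothing pieces being absorbed into the moduli of curves), so I would simply cite it from \cite{CoKa} and conclude that the perfect obstruction theory is trivial, which forces the equality of virtual and fundamental classes.
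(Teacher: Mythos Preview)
Your overall strategy---show $H^{1}(C,f^{*}T_{X})=0$ for every genus-zero stable map and then invoke the identification of virtual and ordinary class---is exactly the paper's. The execution, however, has a real gap.

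The normalization sequence gives the connecting map in the \emph{opposite} direction from what you wrote. The long exact sequence reads
\[
\cdots \longrightarrow \bigoplus_{i} H^{0}(C_{i},f_{i}^{*}T_{X}) \longrightarrow \bigoplus_{p} (f^{*}T_{X})_{p} \longrightarrow H^{1}(C,f^{*}T_{X}) \longrightarrow \bigoplus_{i} H^{1}(C_{i},f_{i}^{*}T_{X}) \longrightarrow 0,
\]
so vanishing of each $H^{1}(C_{i},f_{i}^{*}T_{X})$ only tells you that $H^{1}(C,f^{*}T_{X})$ is the cokernel of the restriction-to-nodes map on $H^{0}$. That map is not automatically surjective from the information you use: the previous lemma allows one $\mathcal{O}(-1)$ summand in $f_{i}^{*}T_{X}$, and already a rank-one bundle restricting to $\mathcal{O}(-1)$ on each of two $\mathbb{P}^{1}$'s meeting at a node has $H^{1}\neq 0$ on the union while $H^{1}$ vanishes on each component.

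What saves the argument---and what the paper invokes by following \cite[Lemma 10]{FuPa}---is the finer output of the preceding lemma: on any component with $d^{(i)}_{n-1}=0$ one actually gets $H^{1}\bigl(\mathbb{P}^{1},(f_{i}^{*}T_{X})(-1)\bigr)=0$, i.e.\ $f_{i}^{*}T_{X}$ is globally generated there, and since $\sum_{i}d^{(i)}_{n-1}\le 1$ this can fail on at most one component. One then peels off a globally generated leaf $C_{m}$ via
\[
0 \longrightarrow f^{*}T_{X}\big|_{C_{m}}(-p) \longrightarrow f^{*}T_{X} \longrightarrow f^{*}T_{X}\big|_{\,\overline{C\setminus C_{m}}} \longrightarrow 0
\]
and inducts (equivalently, this yields surjectivity of the $H^{0}$-map above at every node). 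Once you insert this step your proof is complete; the final appeal to \cite{CoKa} (the paper uses \cite[Proposition 7.3]{BeFa}) is fine.
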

  \begin{proof}
   Let $ft: \overline{\mathcal{M}}_{0,k+1}(X,d) \longrightarrow  \overline{\mathcal{M}}_{0,k}(X,d) $ be the forgetful morphism obtained by forgetting the last marked point (which is the universal curve over $\overline{\mathcal{M}}_{0,k}(X,d)$  \cite{BeMa}).   For any $(f: C\to X; pt_1,\cdots, pt_k)\in \overline{\mathcal{M}}_{0,k}(X,d)$,  by using \cref{no_obs} and following (the proof of)  \cite[Lemma 10]{FuPa}, we have  $H^{1}({C},f^{*}T_{X})=0$.
   Therefore we have $R^{1}ft_{*} ev_{k+1}^{*}T_{X}=0.$
Then by \cite[Proposition 7.3]{BeFa},
    $[\overline{\mathcal{M}}_{0,k}(X,d)]^{\rm vir}$ is the usual fundamental class $[\overline{\mathcal{M}}_{0,k}(X,d)]$.
  \end{proof}

\subsubsection{Computation of Gromov-Witten invariants with $d_{n-1}= 1$}In this subsection, we always assume $d\geq 0$ with $d_{n-1}=1$. For a decomposition $d= d'+ d''$, we always require both $d'\geq0$ and $d''\geq 0$.
Recall the natural projection map $pr: F\ell_n \rightarrow Gr(n-1, n)=(\bbP^{n-1})^*$. Note that $H:=pr(X)$ is a hyperplane in $(\bbP^{n-1})^*$.
{
The  inclusion $\iota: X\hookrightarrow F\ell_n$ induces a  natural  inclusion    $\overline{\mathcal{M}}_{0,2}(X,d)  \hookrightarrow \overline{\mathcal{M}}_{0,2}(F\ell_{n}, \iota_{*}d)$ denoted as $\iota$ by abuse of notation.}
Denote
\begin{align*}
    \mathcal{A}^{\circ}&:= \big\{(f: C\to F\ell_n;\,\, pt_1,pt_2) \in  \overline{\mathcal{M}}_{0,2}(F\ell_n, \iota_*d)\mid f(pt_i)\in X, pr(f(pt_1))\neq pr(f(pt_2))\big\};\\
    \mathcal{B}&:=\big\{(f: C\to F\ell_n;\,\, pt_1,pt_2) \in  \overline{\mathcal{M}}_{0,2}(F\ell_n, \iota_*d)\mid pr(f(pt_1))= pr(f(pt_2))\in H \big\}.
\end{align*}
Since $d_{n-1}=1$, $pr\circ f(C)$ is a line in $(\bbP^{n-1})^*$, containing $pr(f(pt_1)), pr(f(pt_2))$. It follows that $f(C)\subset X$ for any  stable map $f$ in $\mathcal{A}^\circ$, because the line through  the two distinct points $pr(f(pt_1)), pr(f(pt_2))$ in $H$ must lie in  $H$. Note that $\mathcal{A}^\circ$ is a Zariski open dense subset of $\iota(\overline{\mathcal{M}}_{0,2}(X,d))$. Thus   for the evaluation maps $ev_i: \overline{\mathcal{M}}_{0,2}(F\ell_n, \iota_*d)\longrightarrow F\ell_n$, we have
\begin{equation}\label{decompd=1}
ev_1^{-1}(X)\cap ev_2^{-1}(X)
= \mathcal{A}\cup \mathcal{B},\quad\mbox{with}\quad\mathcal{A}:=\overline{\mathcal{A}^\circ}= \iota(\overline{\mathcal{M}}_{0,2}(X,d)).
\end{equation}
Here we note that $\overline{\mathcal{M}}_{0,2}(X,d)$ is proper, so $\iota(\overline{\mathcal{M}}_{0,2}(X,d))$ is closed in $\overline{\mathcal{M}}_{0,2}(F\ell_n,\iota_*d)$.

Denote by $X_{k,d}=ev_k^{-1}(X)\subset \overline{\mathcal{M}}_{0,k}(F\ell_n,\iota_*d)$ the space of stable maps whose last marked point is mapped to $X$.
A stable map $(f:C\to F\ell_n,pt_1,pt_2)$ in $\mathcal{B}$ is of the form
\begin{itemize}
    \item[(1)] $C=C_1\cup C_2$, where the marked points $pt_1,pt_2$ are contained in $C_1$;
    \item[(2)]
$f_1=f|_{C_1}$ is a stable map to $pr^{-1}(p)$ for some $p\in H$;
    \item[(3)] $f_2=f|_{C_2}$ is a stable map to $F\ell_n$.
\end{itemize}
Actually, we can take $C_1$ to be the union of components of $C$ which is maximal with respect to (1) and (2).
The   union of the remaining components $C_2$ is connected since the image $pr(f(C))$ is a line.
As a result,   $\mathcal{B}$ can be written as the following union, in analogy with the boundary divisors of the moduli space of stable maps (see e.g. \cite[Section 6.2]{FuPa}).
   \begin{equation}
       \mathcal{B}=\bigcup_{
     d'+ d''=d\atop
    d_{n-1}'=0
    }  \mathcal{B}_{d',  d''},  \quad\mbox{with}\quad\mathcal{B}_{d',d''} := X_{3,d'}\times_X X_{1,d''}.
   \end{equation}
Here the   fiber product over $X$ stands for the constraint $f_1(pt_3)=f_2(pt)\in X$ for the nodal point of $C$.
  By direct calculations, we have
  $$\dim \mathcal{B}_{ d',  d''}= \langle \iota_{*}d', c_1(F\ell_n)\rangle +\dim X + \langle \iota_{*} d'', c_1(F\ell_n)  \rangle -2= \langle \iota_{*}d, c_1(F\ell_n)\rangle +\dim X -2.$$
 That is, $\mathcal{B}$ is of pure dimension. Note $\mathcal{A}$ and $\mathcal{B}$ are both of codimension $2$ in $\overline{\mathcal{M}}_{0, 2}(F\ell_n, \iota_*d)$.
 % \textcolor{blue}{The scheme-theoretic intersection of $ev_1^{-1}(X)$ and $ev_2^{-1}(X)$ is the zero scheme of the sections $ev_1^*s$ and $ev_2^*s$.
 % At a generic point $(f: \mathbb{P}^1\to F\ell_n; pt_1, pt_2)$ of $\mathcal{A}$,  we have  $pt_1\neq pt_2$ and  the isomorphisms $H^0(\mathbb{P}^1, f^*N_{X/F\ell_n})\cong H^0(\mathbb{P}^1, \mathcal{O}_{\mathbb{P}^1}(1))\cong \mathcal{O}_{\mathbb{P}^1}(1)|_{pt_1}\oplus \mathcal{O}_{\mathbb{P}^1}(1)|_{pt_2}$. Here we note that the differential of $ev_i^*s$ is simply given by the evaluation at the $i$-th point, and the second isomorphism holds since a section of $\mathcal{O}_{\mathbb{P}^1}(1)$ is  uniquely determined by its values at   two distinct points. Hence, the two differentials are linearly independent in the normal directions to $\mathcal{A}$. Consequently,  $ev_1^*s$  $ev_2^*s$ intersect transversally   at  generic points  $\mathcal{A}$.
 % By  analogous  argument (along the node-smoothing direction),  $ev_1^*s$  and $ev_2^*s$   also intersect transversally   at  generic pointsoccurs at every component
 % $\mathcal B_{d',d''}$.
 % }

\begin{prop} \label{lowdg GW}
     Let $d\geq 0$ with $d_{n-1} =1$. For  $u, w\in S_n$ with $u, w\leq w_0s_{n-1}$,
     we have
     \begin{align*}
         \langle \xi^{u}, \xi^{w} \rangle_{0,2,d}^{X}
         =
\langle \sigma^{u}\cup \sigma^{s_{n-1}}, \sigma^{w} \cup \sigma^{s_{n-1}} \rangle_{0,2,\iota_{*}d}^{F\ell_n}- \delta_{q^{d},q_{n-1}} \langle  \sigma^{u} , \sigma^{w} , \sigma^{s_{n-1}} \rangle_{0,3,0}^{F\ell_n}.
     \end{align*}
 \end{prop}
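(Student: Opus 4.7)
The plan is to compute $\langle\xi^u,\xi^w\rangle^X_{0,2,d}$ by transferring everything to the ambient moduli $M:=\overline{\mathcal{M}}_{0,2}(F\ell_n,\iota_*d)$, which is smooth of the expected dimension (by convexity of $F\ell_n$) with virtual class equal to its fundamental class. Writing $X=V(s)$ for a section $s$ of $\mathcal{L}_{\varpi_{n-1}}$, the subscheme $Z:=ev_1^{-1}(X)\cap ev_2^{-1}(X)$ is the zero locus of a section of the rank-$2$ bundle $ev_1^*\mathcal{L}_{\varpi_{n-1}}\oplus ev_2^*\mathcal{L}_{\varpi_{n-1}}$ on $M$. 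The set-theoretic decomposition $Z=\mathcal{A}\cup\bigcup_{d'+d''=d,\,d'_{n-1}=0}\mathcal{B}_{d',d''}$ together with the pure codimension-two count already in the excerpt shows this section is regular; ruling out excess multiplicity along each component (by a local transversality check, or equivalently by passing to a generic translate of $X$) yields the cycle identity
\begin{equation*}
   ev_1^*[X]\cdot ev_2^*[X]=[\mathcal{A}]+\!\!\sum_{d'+d''=d,\,d'_{n-1}=0}\![\mathcal{B}_{d',d''}].
\end{equation*}

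Integrating $ev_1^*\sigma^u\cdot ev_2^*\sigma^w$ against both sides splits the LHS Gromov--Witten invariant on $F\ell_n$ into an $\mathcal{A}$-contribution and a sum of $\mathcal{B}_{d',d''}$-contributions. Since $\mathcal{A}=\iota(\overline{\mathcal{M}}_{0,2}(X,d))$ is a closed embedding, $\iota^*\sigma^u=\xi^u$, and by the preceding proposition $[\overline{\mathcal{M}}_{0,2}(X,d)]^{\mathrm{vir}}$ agrees with its ordinary fundamental class, the $\mathcal{A}$-contribution is precisely $\langle\xi^u,\xi^w\rangle^X_{0,2,d}$. For each piece $\mathcal{B}_{d',d''}=X_{3,d'}\times_X X_{1,d''}$, both $ev_1$ and $ev_2$ factor through the first projection $\pi_1$, so the projection formula produces $\int_{X_{3,d'}}(ev_1')^*\sigma^u\cdot(ev_2')^*\sigma^w\cdot(\pi_1)_*[\mathcal{B}_{d',d''}]$. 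A fiber of $\pi_1$ is a fiber of $\widetilde{ev}_1:X_{1,d''}\to X$, of dimension $\langle d'',c_1(F\ell_n)\rangle-2=2\sum_i d_i''-2\geq 0$ since $d''_{n-1}=1$, with equality iff $d''=[X_{s_{n-1}}]$. Hence for $d''\ne[X_{s_{n-1}}]$ the pushforward vanishes for dimension reasons. When $d''=[X_{s_{n-1}}]$, the partial flag $V_1\subset\cdots\subset V_{n-2}$ of any $V_\bullet\in F\ell_n$ determines a unique line of this class through $V_\bullet$, so $\overline{\mathcal{M}}_{0,1}(F\ell_n,[X_{s_{n-1}}])\to F\ell_n$ is a bijective proper morphism to a smooth target, hence an isomorphism by Zariski's main theorem; consequently $\pi_1$ has degree one, and the $\mathcal{B}$-sum collapses to the single term $\langle\sigma^u,\sigma^w,\sigma^{s_{n-1}}\rangle^{F\ell_n}_{0,3,d-[X_{s_{n-1}}]}$.

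The final step uses the quantum Monk's formula (\Cref{prop: quantumMonkFl}): the quantum terms of $\sigma^{s_{n-1}}\star\sigma^u$ carry only the monomials $q_{ck}=q_c\cdots q_{k-1}$ with $c\leq n-1<k\leq n$, forcing $k=n$ and hence $d_{n-1}=1$. Therefore $\langle\sigma^u,\sigma^w,\sigma^{s_{n-1}}\rangle^{F\ell_n}_{0,3,d'}=0$ whenever $d'_{n-1}=0$ and $d'\ne 0$; applied with $d'=d-[X_{s_{n-1}}]$ this produces exactly the factor $\delta_{q^d,q_{n-1}}\langle\sigma^u,\sigma^w,\sigma^{s_{n-1}}\rangle^{F\ell_n}_{0,3,0}$ in the claimed formula.

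The main obstacle I anticipate is the rigorous justification of the cycle decomposition of $ev_1^*[X]\cdot ev_2^*[X]$: while pure codimension two and regularity of the defining section are in hand, one must still confirm that every irreducible component of $Z$ appears with multiplicity exactly one and that no excess contribution arises from pairwise intersections $\mathcal{A}\cap\mathcal{B}_{d',d''}$ or $\mathcal{B}_{d',d''}\cap\mathcal{B}_{\tilde d',\tilde d''}$. I expect to handle this by a local regularity computation at a generic point of each component, or equivalently by a small-deformation argument replacing $X$ on one side by a generic translate $X^0$, so that $ev_1^{-1}(X^0)\cap ev_2^{-1}(X)$ becomes scheme-theoretically reduced of pure codimension two along the same list of components.
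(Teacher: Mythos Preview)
Your approach is correct and essentially identical to the paper's: decompose $ev_1^{-1}(X)\cap ev_2^{-1}(X)$ into $\mathcal{A}\cup\mathcal{B}$, identify the $\mathcal{A}$-piece with the $X$-invariant, and reduce each $\mathcal{B}_{d',d''}$-piece via the projection to $X_{3,d'}$ to a single $3$-point invariant on $F\ell_n$. For the final vanishing when $d'\neq 0$ with $d'_{n-1}=0$, the paper uses the divisor axiom directly ($\langle\sigma^u,\sigma^w,\sigma^{s_{n-1}}\rangle_{0,3,d'}=d'_{n-1}\cdot\langle\sigma^u,\sigma^w\rangle_{0,2,d'}=0$) rather than invoking the quantum Monk formula, which is slightly cleaner; the paper also glosses over the multiplicity-one issue you correctly flag.
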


\begin{proof}
 Note that the Schubert divisor $X$ defines the Schubert class   $\sigma^{s_{n-1}}=PD([X])$, and $ev_{i}^{-1}(X)$ are divisors of $\overline{\mathcal{M}}_{0, 2}(F\ell_n, \iota_*d)$.
 As discussed above, we have the decomposition
   $ev_1^{-1}(X) \cap ev_{2}^{-1}(X)=
 \overline{\mathcal{M}}_{0,2}(X,d)\cup \bigcup_{(d',  d'')}X_{3,d'}\times_X X_{1,d''}$ with each irreducible component   of codimension 2 in $\overline{\mathcal{M}}_{0,2}(F\ell_n, \iota_*d)$.
 Hence, we have
\begin{align*}
  &\langle \sigma^{u}\cup \sigma^{s_{n-1}}, \sigma^{w} \cup \sigma^{s_{n-1}} \rangle_{0,2,\iota_{*}d}^{F\ell_n}\\
  =&\int_{[\overline{\mathcal{M}}_{0,2}(F\ell_n,\iota_{*}d)]}   ev_1^{*} \sigma^{u} \cup {ev}_2^{*}\sigma^{w} \cup ev_1^{*}\sigma^{s_{n-1}}  \cup {ev_2}^{*}\sigma^{s_{n-1}}  \\
    =\,&\int_{[ev_1^{-1}(X) \cap ev_2^{-1}(X)]}    ev_1^{*} \sigma^{u} \cup  {ev}_2^{*}\sigma^{w}   \\
    =\,& \int_{[\overline{\mathcal{M}}_{0,2}(X,d)]}    ev_1^{*} \sigma^{u} \cup {ev}_2^{*}\sigma^{w}  +\sum_{(d',  d'')}\int_{[X_{3,d'}\times_X X_{1,d''}]}    ev_1^{*} \sigma^{u} \cup  {ev}_2^{*}\sigma^{w}.
\end{align*}
The first term in the last equality
is equal to
$$\int_{[\overline{\mathcal{M}}_{0,2}(X,d))]}\iota^*( ev_1^{*} \sigma^{u} \cup  {ev}_2^{*}\sigma^{w})
= \int_{[\overline{\mathcal{M}}_{0,2}(X,d)]} ev_1^{*} \iota^*\sigma^{u} \cup {ev}_2^{*}\iota^*\sigma^w  =\langle \xi^u, \xi^w \rangle_{0, 2, d}^X.$$
Note $ev_1\times ev_2:X_{3,d'}\times_X X_{1,d''}\to X\times X$
  factors through
$$X_{3,d'}\times_X X_{1,d''}\stackrel{\phi}{\longrightarrow}
X_{3,d'}\stackrel{ev_1\times ev_2}{\longrightarrow}
X\times X,$$
 where $\phi$ is a fibration with generic fibers of dimension the same as that of the generic fiber
 of the evaluation map $  \overline{\mathcal{M}}_{0,1}(F\ell_{n}, \iota_{*}d^{''}) \to   F\ell_n $, namely of dimension
 $\langle \iota_*d'', c_1(T_{F\ell_n})\rangle +1-3$. Hence, $\phi_*[X_{3,d'}\times_X X_{1,d''}]=0$ unless $\langle \iota_*d'', c_1(T_{F\ell_n})\rangle =2$, i.e. $d''=[X_{s_{n-1}}]=\alpha_{n-1, n}$.
 {In this case, $\overline{\mathcal{M}}_{0,1}(F\ell_n,d'')\cong F\ell_n$ (\cite{Str}), thus $X_{3,d'}\times_X X_{1,d''}\to X_{3,d'}$ is of degree $1$.}
 Hence, we have

 \begin{align*}
 \sum_{(d',  d'')}\int_{[X_{3,d'}\times_X X_{1,d''}]}    ev_1^{*} \sigma^{u} \cup  {ev}_2^{*}\sigma^{w}
    =&\sum_{(d',  d'')}\int_{[X_{3,d'}\times_X X_{1,d''}]}    \phi^*\circ (ev_1\times ev_2)^*(\sigma^{u}\boxtimes\sigma^{w}) \\ =&\sum_{(d',  \alpha_{n-1, n})}\int_{[X_{3,d'}]}    (ev_1\times ev_2)^*(\sigma^{u}\boxtimes\sigma^{w})\\
    =&\sum_{(d',  \alpha_{n-1, n})}\int_{[\overline{\mathcal{M}}_{0, 3}(F\ell_n,  \iota_*d')]}    ev_1^*\sigma^u\cup   ev_2^*\sigma^{w}\cup ev_3^*\sigma^{s_{n-1}}.
 \end{align*}
 Recall that $d'_{n-1}=0$.
 If $d'\neq 0$, then by the divisor axiom, we have  $\int_{[\overline{\mathcal{M}}_{0, 3}(F\ell_n,  \iota_*d')]}    ev_1^*\sigma^u\cup   ev_2^*\sigma^{w}\cup ev_3^*\sigma^{s_{n-1}} =
 \int_{\iota_*d'}\sigma^{s_{n-1}} \int_{\overline{\mathcal{M}}_{0,2}(F\ell_{n}, \iota_{*}d')} ev_1^{*}\sigma^{u} \cup ev_2^{*} \sigma^{w}=0$.
 Hence, the above sum is nonzero only if $d=\alpha_{n-1, n}=d''$, in which case we have
 $$ \sum_{(d',  d'')}\int_{[X_{3, d'}\times_X X_{1,  d''}]}    ev_1^{*} \sigma^{u} \cup  {ev}_2^{*}\sigma^{w}= \int_{[\overline{\mathcal{M}}_{0, 3}(F\ell_n,  0)]}    ev_1^*\sigma^u\cup   ev_2^*\sigma^{w}\cup ev_3^*\sigma^{s_{n-1}}=\langle \sigma^u, \sigma^w, \sigma^{s_{n-1}}\rangle_{0, 3, 0}^{F\ell_n}. $$
 Hence, the statement follows.
\end{proof}

\subsubsection{Computation of Gromov-Witten invariants with $d_{n-1}=0$}
The analysis for $d_{n-1}=0$ is similar to but much simpler than that for $d_{n-1}=1$.
\begin{prop} \label{lowdg00}
     Let $d\geq 0$ with $d_{n-1}=0$ and $d\neq 0$.
     For $u, w\in S_n$ with $u, w\leq w_0s_{n-1}$, we have
     \begin{align*}
         \langle \xi^{u}, \xi^{w} \rangle_{0,2,d}^{X}
         =
 \langle \sigma^{u}, \sigma^{w} \cup \sigma^{s_{n-1}} \rangle_{0,2,\iota_{*}d}^{F\ell_n}.
     \end{align*}
 \end{prop}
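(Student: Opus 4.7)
The plan is to observe that the $d_{n-1}=0$ case is much simpler than the $d_{n-1}=1$ case treated in the previous proposition: the argument there consisted of decomposing $ev_1^{-1}(X)\cap ev_2^{-1}(X) = \mathcal{A}\cup\mathcal{B}$ and showing that only $\mathcal{A}=\iota(\overline{\mathcal{M}}_{0,2}(X,d))$ contributes (with a correction from $\mathcal{B}$), and in the present case I would argue that the $\mathcal{B}$-piece does not appear at all, leaving $\mathcal{A}$ as the sole contribution.

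Concretely, I would first note that $d_{n-1} = \int_{\iota_*d}\sigma^{s_{n-1}}$ equals the degree of $pr\circ f:C\to\mathbb{P}^{n-1}$ for every stable map $f:C\to F\ell_n$ representing $\iota_*d$. Hence when $d_{n-1}=0$ the composition $pr\circ f$ is a constant map. Combined with the identity $X=pr^{-1}(H)$, this means that $f(pt_2)\in X$ forces $pr(f(C))\subset H$, so in fact $f(C)\subset X$. A decomposition into reducible curves of type (1)-(3) as in the $d_{n-1}=1$ analysis is impossible, because the ``line-valued'' component would require positive $d_{n-1}$-degree. Consequently, as closed substacks of $\overline{\mathcal{M}}_{0,2}(F\ell_n,\iota_*d)$, we have $ev_2^{-1}(X) = \iota(\overline{\mathcal{M}}_{0,2}(X,d))$.

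Next I would use $\sigma^{s_{n-1}}=PD([X])$, the identification above, together with the previous proposition giving $[\overline{\mathcal{M}}_{0,2}(X,d)]^{\rm vir}=[\overline{\mathcal{M}}_{0,2}(X,d)]$ for $d_{n-1}\leq 1$ (and the convexity of $F\ell_n$, which makes $[\overline{\mathcal{M}}_{0,2}(F\ell_n,\iota_*d)]^{\rm vir}$ equal to its usual fundamental class), in order to obtain the cycle-level equality
\[ ev_2^*\sigma^{s_{n-1}}\cap [\overline{\mathcal{M}}_{0,2}(F\ell_n,\iota_*d)] = \iota_*[\overline{\mathcal{M}}_{0,2}(X,d)]. \]
Applying the projection formula then gives
\[ \langle \sigma^u,\sigma^w\cup\sigma^{s_{n-1}}\rangle_{0,2,\iota_*d}^{F\ell_n} = \int_{[\overline{\mathcal{M}}_{0,2}(X,d)]} ev_1^*\iota^*\sigma^u\cup ev_2^*\iota^*\sigma^w = \langle \xi^u,\xi^w\rangle_{0,2,d}^X, \]
which is the claimed identity.

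The only mildly delicate point, and the place I would take most care, is the cycle-theoretic (as opposed to merely set-theoretic) equality $ev_2^*\sigma^{s_{n-1}}\cap [\overline{\mathcal{M}}_{0,2}(F\ell_n,\iota_*d)] = \iota_*[\overline{\mathcal{M}}_{0,2}(X,d)]$; this amounts to checking that $ev_2$ is transverse to $X$ along $\iota(\overline{\mathcal{M}}_{0,2}(X,d))$, or equivalently that the inclusion is of the expected codimension one with multiplicity one. Since $pr\circ ev_2$ is constant on each geometric fiber of the forgetful-type map that remembers $pr\circ f$, while $X$ cuts out a single linear condition on the $\mathbb{P}^{n-1}$-factor, this transversality is immediate and no excess-intersection correction appears, confirming the formula.
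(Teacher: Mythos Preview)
Your proposal is correct and follows essentially the same approach as the paper: both arguments observe that $d_{n-1}=0$ forces $pr\circ f$ to be constant, hence $ev_2^{-1}(X)=\iota(\overline{\mathcal{M}}_{0,2}(X,d))$, and then conclude by pushing forward along $\iota$ and identifying $ev_2^*\sigma^{s_{n-1}}$ with the class of this divisor. Your treatment is in fact slightly more careful than the paper's in flagging the cycle-theoretic (multiplicity-one) issue, which the paper passes over silently.
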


 \begin{proof}
   For $i\in\{1, 2\}$ and $(f:C\to F\ell_n; pt_1,pt_2) \in  ev_i^{-1}(X)\subset  \overline{\mathcal{M}}_{0,2}(F\ell_{n}, \iota_{*}d)$,
   we have $pr_*([f(C)])=0$,  {so that $pr(f(C))$ consists of a point in $\bbP^{n-1}$.}Moreover,
   $pr(f(pt_1))=pr(f(pt_2))=pr(ev_i(f)) \in H$. It follows that $f(C)\subset X$. Hence,
    $ev_i^{-1}(X)=\iota(\overline{\mathcal{M}}_{0,2}(X, d))$ for any $i\in\{1,2\}$.  Recall $\xi^u=\iota^*\sigma^u$ and note $PD([ev_i^{-1}(X)])=ev_i^*\sigma^{s_{n-1}}$. Hence, we have
\begin{align*}
 \langle \xi^{u}, \xi^{w} \rangle_{0,2,d}^{X} =&\int_{[\overline{\mathcal{M}}_{0,2}(X, d)]} ev_{1}^{*}\iota^*\sigma^{u}  \cup ev_{2}^{*}\iota^*\sigma^{w}\\
 =&\int_{[\overline{\mathcal{M}}_{0,2}(X, d)]} \iota^*(ev_{1}^{*}\sigma^{u}  \cup ev_{2}^{*}\sigma^{w})\\
 =&
\int_{[\iota(\overline{\mathcal{M}}_{0,2}(X, d))]}ev_{1}^{*}\sigma^{u}  \cup ev_{2}^{*}\sigma^{w} \\
 =&\int_{[\overline{\mathcal{M}}_{0,2}(F\ell_{n}, \iota_{*}d)]} ev_{1}^{*}\sigma^{u}  \cup ev_{2}^{*}\sigma^{w} \cup ev_2^{*}\sigma^{s_{n-1}}.
\qedhere
\end{align*}

 \end{proof}

\subsection{Vanishing of  Gromov-Witten invariants with $d_{n-1}\geq 2$}
\subsubsection{Vanishing by curve neighborhood technique}
 We  use the curve neighborhood technique developed by Buch and Mihalcea \cite{BuMi}, to show the vanishing of    Gromov-Witten invariants of degree $d$ with $d_{n-1}\geq 2$ and $d\neq \alpha_{in}+\alpha_{n-1, n}$.

\begin{defn} Let $d\geq 0$ and $u\in S_n$, which further satisfies $u(n)\neq 1$ for $Y=X$.
    The curve neighborhood  ${\Gamma}_{d}^{Y}(X_{u})$   of $X_{u}$ of degree $d$ is a reduced subscheme of $Y$  defined by
    $${\Gamma}_{d}^{Y}(X_{u})={ev}_2({ev}_1^{-1}(X_u)).$$
\end{defn}

 The permutation $z_d\in S_n$ associated with $d\geq 0$ is defined by using the Hecke product $\bullet$ on $S_n$ as follows. Note
$$  w \bullet s_{i}=\begin{cases}
    ws_i,&\mbox{if }\ell(ws_{i}) >  \ell(w), \\
    w,&\mbox{otherwise}.
\end{cases}$$
Take a sequence $(\alpha_{i_1j_1},\alpha_{i_2j_2},\cdots,\alpha_{i_kj_k})$ of maximal elements $\alpha_{i_rj_r}=\sum_{m=i_r}^{j_r-1}[X_{s_m}]$ with respect to $d$; that is, each $\alpha_{i_rj_r}$ is maximal in the sense $\alpha_{i_rj_r}\in A_r:=\{\alpha_{ab}\mid d-\alpha_{ab}-\sum_{m=1}^{r-1}\alpha_{i_mj_m}\geq 0\}$ with $j_r-i_r=\max\{b-a\mid \alpha_{ab}\in A_r\}$.
  Then $k$ depends only on $d$, and
  $$z_d:=t_{i_1 j_1}\bullet t_{i_2 j_2} \bullet \cdots \bullet t_{i_k j_k}\in S_n $$   is also independent of the choice  of  a sequence  of maximal elements with respect to  $d$.

\begin{prop}[\protect{\cite[Theorem 5.1]{BuMi}}] \label{BM}
  ${\Gamma}^{F\ell_n}_{d}(X_{u})=X_{u \bullet z_d}$, for    $u\in  S_{n}$ and   $d\geq 0$.
\end{prop}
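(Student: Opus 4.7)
The plan is to first observe that $\Gamma^{F\ell_n}_d(X_u)$ is automatically a Schubert variety. The moduli space $\overline{\mathcal{M}}_{0,2}(F\ell_n, d)$ carries a natural $B$-action with respect to which both evaluation maps are equivariant, so $\mathrm{ev}_1^{-1}(X_u)$ is $B$-stable, and hence its image under the proper $B$-equivariant map $\mathrm{ev}_2$ is a closed $B$-stable subvariety of $F\ell_n$. By the Bruhat decomposition, such a subvariety must be the closure of a single $B$-orbit, hence of the form $X_w$ for some unique $w \in S_n$. The remaining task is to identify $w$ with $u \bullet z_d$.

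My strategy is induction on the number $k$ of factors appearing in any maximal-transposition expression for $z_d$. The base case $k = 0$, i.e. $d = 0$, is immediate: $\overline{\mathcal{M}}_{0,2}(F\ell_n, 0) \cong F\ell_n$ with $\mathrm{ev}_1 = \mathrm{ev}_2$, so $\Gamma^{F\ell_n}_0(X_u) = X_u = X_{u \bullet e}$. For the inductive step, pick a maximal $\alpha_{ij}$ with $d - \alpha_{ij} \geq 0$ and write $d = \alpha_{ij} + d'$. The key geometric input is the identity
\[
\Gamma^{F\ell_n}_d(X_u) \;=\; \Gamma^{F\ell_n}_{d'}\!\bigl(\Gamma^{F\ell_n}_{\alpha_{ij}}(X_u)\bigr),
\]
where the inclusion $\supseteq$ follows by gluing a degree-$\alpha_{ij}$ curve emanating from $X_u$ to a degree-$d'$ curve at a common point, and the inclusion $\subseteq$ follows because any stable map of class $d$ can be degenerated to a chain in which a degree-$\alpha_{ij}$ component separates off. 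Granting this, the problem reduces by induction to the single-root case $\Gamma^{F\ell_n}_{\alpha_{ij}}(X_u) = X_{u \bullet t_{ij}}$, and independence from the chosen maximal sequence then follows from associativity of the Hecke product $\bullet$ on $S_n$.

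The single-root case is the main obstacle. My approach would be to exploit the fact that degree-$\alpha_{ij}$ rational curves in $F\ell_n$ correspond to lines inside the fibers of the partial flag projection
\[
\pi_{ij} \colon F\ell_n \;\longrightarrow\; F\ell(1, \ldots, i, j, \ldots, n-1;\, n)
\]
that forgets the intermediate dimensions strictly between $i$ and $j$; each fiber is a flag variety in a $(j-i)$-dimensional space, and the extremal class $\alpha_{ij}$ is represented by lines in the associated projective space $\mathbb{P}^{j-i}$. Since any two distinct points of $\mathbb{P}^{j-i}$ lie on a unique line, on a dense open locus $\Gamma^{F\ell_n}_{\alpha_{ij}}(X_u)$ agrees with $\pi_{ij}^{-1}\pi_{ij}(X_u)$, whose closure is again a Schubert variety. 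Identifying that Schubert variety combinatorially as $X_{u \bullet t_{ij}}$ is the delicate point: when $\ell(ut_{ij}) > \ell(u)$ the projection strictly enlarges $X_u$ to $X_{ut_{ij}}$, while otherwise $X_u$ is $\pi_{ij}$-saturated and the neighborhood does not grow, precisely reflecting the two cases of the Hecke product. I would verify this via moment-graph/chain-of-lines analysis at the torus-fixed points, or by a Peterson--Woodward-style comparison reducing to the two-step flag variety $F\ell(i, j; n)$, where the statement becomes a direct computation for Grassmannian lines.
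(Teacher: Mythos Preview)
The paper does not prove this proposition at all; it is quoted verbatim from Buch--Mihalcea \cite[Theorem~5.1]{BuMi} and used as a black box. So there is nothing to compare against on the paper's side. That said, your sketch has real gaps.

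First, a minor point: a closed $B$-stable subvariety of $F\ell_n$ is a \emph{union} of Schubert varieties, not automatically a single one. You need irreducibility of $\Gamma_d(X_u)$, which in turn requires irreducibility of $\overline{\mathcal{M}}_{0,2}(F\ell_n,d)$ (true, but it must be invoked).

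More seriously, your justification of the inclusion $\Gamma_d(X_u)\subseteq \Gamma_{d'}(\Gamma_{\alpha_{ij}}(X_u))$ via ``degenerating any stable map of class $d$ to a chain with an $\alpha_{ij}$-component'' does not work as stated: degenerating a stable map does not fix the images of the marked points, so you lose control of which point of $X_u$ and which target point $q$ you started with. In Buch--Mihalcea this step is handled by reducing to $T$-fixed points and analyzing chains of $T$-stable curves in the moment graph; the inequality you need is a combinatorial statement about greedy chains, not a naive degeneration.

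The most concrete error is in the single-transposition case. Your projection $\pi_{ij}$ keeps $V_i$ and $V_j$ and forgets $V_{i+1},\dots,V_{j-1}$; but $\alpha_{ij}=\sum_{k=i}^{j-1}[X_{s_k}]$ contains $[X_{s_i}]$, which varies $V_i$, so degree-$\alpha_{ij}$ curves are \emph{not} contracted by $\pi_{ij}$ and do not lie in its fibers. Even if you correct the projection to forget $V_i,\dots,V_{j-1}$, the fiber is the full flag variety $F\ell_{j-i+1}$, not a projective space, and the curve neighborhood of a point for degree $(1,\dots,1)$ inside that fiber is the Schubert variety $X_{t_{ij}}$ of dimension $2(j-i)-1$, which is strictly smaller than the fiber once $j-i\geq 3$. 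Hence ``any two points lie on a line'' is false here, and $\Gamma_{\alpha_{ij}}(X_u)\neq \pi^{-1}\pi(X_u)$ in general. The identification $\Gamma_{\alpha_{ij}}(X_u)=X_{u\bullet t_{ij}}$ is exactly the substantive content of the Buch--Mihalcea argument, proved by a careful moment-graph analysis rather than a fiberwise line argument; your hedge toward ``moment-graph/chain-of-lines analysis'' points in the right direction but is where essentially all the work lies.
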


\begin{lemma} \label{lemma:len_zd}
  Let $d\geq 0$ with $d_{n-1}\geq 2$. Then we have $\ell(z_d) \leq  \langle d, c_1(T_{X})\rangle-1$, with equality holding only if $d=\alpha_{in}+\alpha_{n-1,n}$ for some $i$.
\end{lemma}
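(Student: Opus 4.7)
My plan is to use the Hecke product bound $\ell(z_d) \leq \sum_r \ell(t_{i_r, j_r})$ and analyze when it drops. Two preliminary identities underpin the argument: (i) the count of $r$ with $j_r = n$ equals $d_{n-1}$, since each such $\alpha_{i_r, n}$ contributes exactly $1$ to the coordinate $d_{n-1}$, hence $k \geq d_{n-1}$; and (ii) comparing $\ell(t_{i_r, j_r}) = 2(j_r - i_r) - 1$ with $\langle \alpha_{i_r, j_r}, c_1(T_X)\rangle = 2(j_r - i_r) - \delta_{j_r, n}$ yields $\sum_r \ell(t_{i_r, j_r}) = \langle d, c_1(T_X)\rangle + d_{n-1} - k$. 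Writing $D \geq 0$ for the total length drop in the iterated Hecke product, this rearranges to $\ell(z_d) = \langle d, c_1(T_X)\rangle - 1 - (k - d_{n-1} - 1) - D$.

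The argument would then split on the value of $k - d_{n-1}$. The case $k \geq d_{n-1} + 2$ gives strict inequality immediately. The case $k = d_{n-1}$ (every $j_r = n$) is handled by a direct pairwise computation: $\ell(t_{i, n} \bullet t_{j, n}) \leq \ell(t_{i, n}) + \ell(t_{j, n}) - 1$ for $i \leq j$, with equality iff $j = n-1$ (in which case $t_{j, n} = s_{n-1}$ is absorbed by the descent of $t_{i, n}$ at $n-1$). Since $k \geq 2$, the first pair already produces $D \geq 1$ and the desired inequality. Equality analysis: if $k \geq 3$, then either some $i_r < n-1$ with $r \geq 2$ makes the pairwise drop exceed $1$, or all subsequent $i_r = n-1$ force each Hecke step with $s_{n-1}$ to contribute its own drop (since the running product already has a descent at $n-1$). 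Both options yield strict inequality, leaving $k = 2$ with $d = \alpha_{i, n} + \alpha_{n-1, n}$ as the unique equality case.

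The intermediate case $k = d_{n-1} + 1$ is the main technical obstacle: the Hecke bound leaves no slack, so I need to extract an extra drop $D \geq 1$ to get strict inequality (the lemma requires strict here, as $k \geq 3$ excludes $d$ from the special family). The plan is to propagate the descent property at $n - 1$: once the first $t_{?, n}$ enters the iterated Hecke product, the running partial product acquires a descent at $n - 1$ inherited from $t_{?, n}$; subsequent Hecke with another $t_{?, n}$, whose reduced expression begins with $s_{n-1}$, must then drop, since a descent at $n - 1$ means $\bullet\, s_{n-1}$ does not increase length. Making this argument rigorous, in particular verifying that the descent at $n - 1$ persists across the intermediate Hecke steps with $t_{?, <n}$ (which may involve $s_{n-2}$ and interact via the braid relation $s_{n-2} s_{n-1} s_{n-2} = s_{n-1} s_{n-2} s_{n-1}$), is the most delicate step. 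An alternative route is to reorder the maximal sequence, using that adjacent elements with equal $b - a$ values can be swapped while preserving maximality, so as to bring two $\alpha_{?, n}$'s into adjacent positions and reduce to the pairwise computation from the previous case.
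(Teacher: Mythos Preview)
Your overall strategy and the two identities you derive are correct, and the case split on $k-d_{n-1}$ is a reasonable way to organize the argument. The case $k=d_{n-1}+1$ is indeed where the proof requires care, and neither of your two proposed fixes quite closes the gap as stated. The descent-persistence route can genuinely fail at the level of generality you describe: Hecke-multiplying by some $t_{a,n-1}$ may move a small value into position $n-1$ and destroy the descent at $n-1$. Your alternative route (swapping adjacent equal-length elements of the maximal sequence) only lets you permute within blocks of constant $j_r-i_r$, which need not bring two $\alpha_{?,n}$'s together.

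What you are missing is a sharper consequence of maximality: in any maximal sequence, whenever $\alpha_{ab}$ with $b<n$ \emph{precedes} some $\alpha_{cn}$, one necessarily has $b<c$. Indeed, the remaining degree $R$ at the step where $\alpha_{ab}$ is chosen satisfies $R\geq \alpha_{ab}+\alpha_{cn}$; if $c\leq b$ then the union of the supports covers $[a,n-1]$, so $\alpha_{an}\leq R$ with $n-a>b-a$, contradicting maximality of $\alpha_{ab}$. Once $b<c$, the simple-reflection supports $\{s_a,\dots,s_{b-1}\}$ and $\{s_c,\dots,s_{n-1}\}$ are separated, so $t_{ab}$ and $t_{cn}$ lie in commuting parabolic subgroups and therefore commute in the Hecke product. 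This lets you move every factor $t_{?,n}$ past every $t_{?,<n}$ that precedes it, bringing all $d_{n-1}$ factors of the form $t_{?,n}$ to the front of the Hecke product. Your pairwise estimate $\ell(t_{in}\bullet t_{jn})\leq \ell(t_{in})+\ell(t_{jn})-1$ then yields $D\geq d_{n-1}-1$ uniformly, hence
\[
\ell(z_d)\leq 2|d|-k-(d_{n-1}-1)\leq \langle d,c_1(T_X)\rangle -d_{n-1}+1\leq \langle d,c_1(T_X)\rangle-1,
\]
with equality forcing $k=d_{n-1}=2$ and then $j=n-1$ by your pairwise equality analysis. This is exactly the paper's argument: the single reordering step replaces your three-way case split and resolves the $k=d_{n-1}+1$ case without any separate treatment.
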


\begin{proof}
Take a sequence $(\alpha_{i_1j_1},\alpha_{i_2j_2},\cdots,\alpha_{i_kj_k})$ of maximal elements with respect to $d$. We may arrange that $j_r=n$ for $1\leq r\leq d_{n-1}$ (by noting that   the corresponding transpositions of the form $t_{cn}$ are disjoint with that of the form $t_{ab}$ with $b<n$; otherwise, $b=c$, and $\alpha_{an}$ would be a bigger element than $\alpha_{cn}$).
Note $\ell(t_{ab})=2b-2a-1=2|\alpha_{ab}|-1$, $\ell(t_{an}\bullet t_{bn})\leq \ell(t_{an})+\ell(t_{bn})-1$ (since $t_{an}$ (resp. $t_{bn}$) has a reduced expression ending (resp. starting) with $s_{n-1}$). Hence, $k\geq d_{n-1}\geq 2$, and we have
\begin{align*}
   \ell(z_d) &\leq \ell(t_{i_1 n} \bullet \cdots \bullet  t_{i_{d_{n-1}} n})+\ell(t_{i_{d_{n-1}+1} j_{d_{n-1}+1} } \bullet  \cdots \bullet   t_{i_k j_k})\\
   &\leq \sum_{r=1}^{d_{n-1}}
    \ell(t_{i_r n})-(d_{n-1}-1)+ \sum_{r=d_{n-1}+1}^{k} \ell(t_{i_r j_r})\\
    &=2|d|-k-(d_{n-1}-1)\\
    &\leq 2|d|-d_{n-1}-(d_{n-1}-1)=\langle d, c_1(T_X)\rangle -d_{n-1}+1.
\end{align*}
 Hence, $\ell(z_d)\leq \langle d, c_1(T_X)\rangle
    -1$, with equality holding only if $k=d_{n-1}=2$.
    When   equality holds, all the above inequalities are equalities.
    In particular,
    $d=\alpha_{in}+\alpha_{jn}$ for some $i\leq j$, and $\ell(t_{in}\bullet (s_{n-1}t_{jn}))=\ell(t_{in}\bullet t_{jn})=\ell(t_{in})+\ell(t_{jn})-1$. Consequently $t_{in}\bullet (s_{n-1}t_{jn})=t_{in}(s_{n-1}t_{jn})$, and it has a reduced expression ending with $s_{n-1}$.
     It follows that
       $t_{in}(s_{n-1}t_{jn})(n-1)>t_{in}(s_{n-1}t_{jn})(n)$. This results in a contradiction $i>t_{in}(j)$ in the case  $j<n-1$.
\end{proof}

\begin{prop} \label{prop: d_vanish}
   Let $d\geq 0$ satisfy   $d_{n-1} \geq 2$ and $d\neq \alpha_{in}+\alpha_{n-1, n}$ for $1\leq i\leq n-1$.  Then for any $\beta, \gamma \in H^*(X)$, we have
     $\langle \beta, \gamma \rangle_{0,2,d}=0$.
\end{prop}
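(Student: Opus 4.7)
The plan is to prove the stronger statement that the pushforward of the virtual fundamental class under the joint evaluation map $ev:=(ev_{1},ev_{2}):\overline{\mathcal{M}}_{0,2}(X,d)\to X\times X$ vanishes in $A_{\mathrm{vdim}}(X\times X)$, where $\mathrm{vdim}=\dim X-1+\langle d,c_{1}(T_{X})\rangle$ is the virtual dimension of $\overline{\mathcal{M}}_{0,2}(X,d)$. Once this is established, the projection formula yields
\[
\langle\beta,\gamma\rangle_{0,2,d}^{X}=\int_{X\times X}(\beta\boxtimes\gamma)\cap ev_{*}[\overline{\mathcal{M}}_{0,2}(X,d)]^{\mathrm{vir}}=0
\]
for every $\beta,\gamma\in H^{*}(X)$ simultaneously. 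It suffices to check that the set-theoretic image $Z:=ev(\overline{\mathcal{M}}_{0,2}(X,d))\subseteq X\times X$ has dimension strictly less than $\mathrm{vdim}$, since then the scheme-theoretic image $\bar{Z}$ is too small to carry any nonzero cycle in $A_{\mathrm{vdim}}$.

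To bound $\dim Z$, I would slice by the first factor: the fibre of $\pi_{1}:Z\to X$ over $p\in X$ lies in $\{p\}\times\Gamma_{d}^{X}(\{p\})$, so by the fibre dimension theorem $\dim Z\leq\dim X+\max_{p\in X}\dim\Gamma_{d}^{X}(\{p\})$. The natural inclusion $\overline{\mathcal{M}}_{0,2}(X,d)\hookrightarrow\overline{\mathcal{M}}_{0,2}(F\ell_{n},\iota_{*}d)$ intertwines the two evaluation maps, giving $\Gamma_{d}^{X}(\{p\})\subseteq\Gamma_{d}^{F\ell_{n}}(\{p\})$. By \Cref{BM} applied to $u=\mathrm{id}$, combined with $SL_{n}$-transitivity on $F\ell_{n}$, $\dim\Gamma_{d}^{F\ell_{n}}(\{p\})=\ell(z_{d})$ for every $p$, so $\dim Z\leq\dim X+\ell(z_{d})$. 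Invoking \Cref{lemma:len_zd}, the hypotheses $d_{n-1}\geq 2$ and $d\neq\alpha_{in}+\alpha_{n-1,n}$ exclude the equality case, forcing $\ell(z_{d})\leq\langle d,c_{1}(T_{X})\rangle-2$. Combining, $\dim Z\leq\dim X+\langle d,c_{1}(T_{X})\rangle-2<\mathrm{vdim}$, which is exactly the dimension gap needed.

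The essential difficulty is already packaged inside \Cref{lemma:len_zd}, whose proof is a careful Hecke-product length calculation exploiting that any two transpositions $t_{an}$, $t_{bn}$ share the simple reflection $s_{n-1}$ and hence lose at least one length unit under $\bullet$. Beyond this lemma, the argument uses only the fibre dimension theorem and the standard dimensional vanishing $A_{k}(\bar{Z})=0$ for $k>\dim\bar{Z}$; no further geometric input on stable maps into $X$ is required.
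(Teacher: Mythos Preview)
Your proof is correct and follows essentially the same curve-neighborhood argument as the paper: both rely on \cref{BM} together with the strict length bound of \cref{lemma:len_zd} to show the relevant pushforward class is supported on a subvariety of dimension strictly below the virtual dimension. The only cosmetic difference is that you bound the full image in $X\times X$ via $\Gamma_d^X(\{p\})$, whereas the paper projects to a single factor and bounds $\Gamma_d^X(X_u)$ for Schubert varieties; the dimension count is identical in both cases.
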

\begin{proof}
    Take any $u, v\in S_n$ with $u, v\leq w_0s_{n-1}$. Note $\Gamma_{d}^{X}(X_{u})\subseteq \Gamma_{d}^{F\ell_n}(X_{u})=X_{u\bullet z_d}$
 by  \cref{BM}. Since $d_{n-1}\geq 2$ and $d\neq \alpha_{in}+\alpha_{n-1, n}$,     by \cref{lemma:len_zd}, we have
    $$\dim\Gamma_{d}^{X}(X_{u})
        \leq \dim (X_{u\bullet z_d})=\ell(u\bullet z_d) \leq \ell(u)+\ell(z_d)<\ell(u)+\langle d, c_1(T_X)\rangle-1.
    $$

 Denote $PD([X_u])$ as $[X_u]$ by abuse of notation.    Using projection formula, we have
    $$\int_{[\overline{\mathcal{M}}_{0,2}(X,d)]^{\rm vir}}   ev_1^{*} ([X_u]) \cup ev_2^{*}([X_v])=\int_{X}  (ev_{2})_{*} (ev_1^{*} ([X_u]) \cap [\overline{\mathcal{M}}_{0,2}(X,d)]^{\rm vir}  )\cup  [X_v]. $$
The cycle $(ev_{2}) (ev_1^{-1} ([X_u]))$  is supported on  the curve neighborhood $\Gamma_{d}^{X}(X_{u})$, and the pushforward  $(ev_{2})_{*} (ev_1^{*} ([X_u]) \cap [\overline{\mathcal{M}}_{0,2}(X,d)]^{\rm vir}   )$ is non-zero only if the curve neighborhood $\Gamma_{d}^{X}(X_{u})$ has components of dimension
$$\text{expdim}~  \overline{\mathcal{M}}_{0,2}(X,d)-(\dim X-\ell(u))= \langle d, c_1(T_{X})\rangle-1+\ell(u). $$
 However, such components do not exist by the above estimation of $\dim \Gamma^X_d(X_u)$.

 Hence,
    $\langle [X_{u}], [X_{v}] \rangle_{0,2,d}=\int_{[\overline{\mathcal{M}}_{0,2}(X,d)]^{\rm vir}}   ev_1^{*} ([X_u]) \cup ev_2^{*}([X_v])=0$.
    Since $\{[X_u]\}_{u\leq w_0s_{n-1}}$ is a basis of $H^*(X, \bbZ)$, the statement follows.
\end{proof}

\subsubsection{Vanishing for specific degrees} It remains to show the vanishing of Gromov-Witten invariants for  $d=\alpha_{in}+\alpha_{n-1, n}$. \footnote{
Special treatments for Gromov-Witten invariants of specific degrees were also required in certain blowups of Grassmannians in
\cite[Lemma 3.7]{HKLS24} and \cite[Theorem 4.1 (b)]{HKLS}. }

Let us consider
$$\mathcal{P}=F\ell_{1,\ldots,n-2;n},\qquad
Y= \{V_\bullet\in\mathcal{P}\mid F_1\subseteq V_{n-2}\}.$$
We have a natural projection $\pi:X\to \mathcal{P}$ by forgetting $V_{n-1}$.
The fiber over a point of $Y$ is $\mathbb{P}^1$, while the fiber over a point of $\mathcal{P}\setminus Y$ is a point.
Note
$$ \dim \mathcal{P}=\dim X,\qquad
\dim Y=\dim \mathcal{P}-2.$$

\begin{prop} \label{prop: d_specific}
   Let $d=\alpha_{in}+\alpha_{n-1, n}$ where  $1\leq i\leq n-1$.  Then for any $\beta_1, \beta_2 \in H^*(X)$, we have
     $\langle \beta_1, \beta_2 \rangle_{0,2,d}=0$.
\end{prop}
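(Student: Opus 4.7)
The plan is to exploit the birational projection $\pi: X \to \mathcal{P}$, which realizes $X$ as the blowup of $\mathcal{P}$ along the codimension-$2$ subvariety $Y$, with exceptional divisor $E = \pi^{-1}(Y)$ a $\mathbb{P}^1$-bundle over $Y$. Since the fibers of $\pi$ are contracted, $\pi_*[X_{s_{n-1}}] = 0$, hence $\pi_* d = \alpha_{in}$. The goal is to show that the image of the evaluation map $(ev_1, ev_2): \overline{\mathcal{M}}_{0,2}(X, d) \to X \times X$ has dimension strictly less than the virtual dimension $\dim X + 2(n-i) - 1$ of $\overline{\mathcal{M}}_{0,2}(X, d)$. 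This will force $(ev_1, ev_2)_*[\overline{\mathcal{M}}_{0,2}(X, d)]^{\mathrm{vir}} = 0$ in $H_*(X \times X)$ (a cycle class supported on a subset of strictly smaller dimension must vanish), and therefore $\langle \beta_1, \beta_2\rangle_{0,2,d}^X = 0$ for all $\beta_1, \beta_2 \in H^*(X)$.

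Assume first $1 \leq i \leq n-2$. I would begin by observing that any stable map $(f:C\to X;\,\mathrm{pt}_1,\mathrm{pt}_2)$ of class $d$ must have a reducible domain: if $C$ were irreducible, then $\pi|_C: C \to \pi(C) \subset \mathcal{P}$ would be a birational morphism onto an irreducible rational curve of class $\alpha_{in}$, forcing $C$ to be the strict transform of $\pi(C)$, whose class in $H_2(X)$ would not equal $d$. Hence $C$ decomposes into a ``main'' subcurve $C_{\mathrm{main}}$ with $\pi_*[C_{\mathrm{main}}] = \alpha_{in}$, together with exactly one fiber subcurve $C_{\mathrm{fib}} \cong \mathbb{P}^1$ of class $[X_{s_{n-1}}]$ mapping isomorphically onto a $\pi$-fiber over some $y\in Y$, joined at a node above $y$. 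Stratify $\overline{\mathcal{M}}_{0,2}(X, d)$ by how the two marked points distribute between $C_{\mathrm{main}}$ and $C_{\mathrm{fib}}$; each stratum fibers over a moduli of the form $\overline{\mathcal{M}}_{0,k}(X, \alpha_{in})$ with the extra incidence condition that the node lies on $E$ (codimension $1$ in $X$), plus a $1$-dimensional choice of position on $C_{\mathrm{fib}}$ for each marked point placed there. Since $(\alpha_{in})_{n-1} = 1$, Proposition 4.2 applies to $\overline{\mathcal{M}}_{0,k}(X, \alpha_{in})$, and a direct count shows each stratum has dimension at most $\dim X + 2(n-1-i) - 1$, strictly less than the expected $\dim X + 2(n-i) - 1$.

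For the remaining case $i = n-1$, where $d = 2[X_{s_{n-1}}]$, any connected stable map of this class is entirely contained in a single $\pi$-fiber over some $y \in Y$. The image of $(ev_1, ev_2)$ then sits inside the subvariety $\{(p_1, p_2) \in X \times X : \pi(p_1) = \pi(p_2) \in Y\}$ of dimension $\dim Y + 2 = \dim X$, again strictly less than the expected $\dim X + 1$. In both cases the pushforward of the virtual fundamental class to $X \times X$ therefore vanishes by the dimension argument.

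The main obstacle will be the careful bookkeeping when $C_{\mathrm{main}}$ itself degenerates — becoming reducible or admitting further fiber attachments of varying multiplicities. Such configurations form loci of strictly higher codimension and do not alter the dimension bound, but they must be enumerated to ensure no stratum attains the expected dimension. The verification that the strict-transform lift has class different from $d$ (ruling out the irreducible stratum) also needs a clean intersection-theoretic argument on the blowup. This strategy parallels \cite[Lemma 3.7]{HKLS24} and \cite[Theorem 4.1(b)]{HKLS}, which treated analogous blowup configurations.
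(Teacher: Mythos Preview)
Your case $i=n-1$ matches the paper's argument exactly. For $i<n-1$ the strategies diverge: the paper does \emph{not} stratify by the combinatorial type of the domain curve. Instead it passes through the induced map $\hat{\pi}:\overline{\mathcal{M}}_{0,2}(X,d)\to \overline{\mathcal{M}}_{0,2}(\mathcal{P},\pi_*d)$, observes that $\hat{\pi}$ is not surjective (any stable map in the image must meet $Y$, a codimension-one constraint on a moduli space that is already one dimension below $\exp\dim\overline{\mathcal{M}}_{0,2}(X,d)$), and then bounds the support of $(ev_1\times ev_2)_*[\overline{\mathcal{M}}_{0,2}(X,d)]^{\mathrm{vir}}$ inside $(\pi\times\pi)^{-1}(\mathrm{im}(ev_{\mathcal{P}}\circ\hat{\pi}))$. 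The only case where equality could occur is when both marked points land over $Y$, and this is ruled out by a curve-neighborhood calculation in $\mathcal{P}$. The virtue of this route is that it handles all domain degenerations uniformly, without ever needing to know whether an irreducible curve of class $d$ exists.

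Your stratification approach is natural and does parallel \cite{HKLS24,HKLS}, but there is a real gap in the irreducibility step. Your argument that an irreducible $f(C)$ must be the strict transform of $\pi(f(C))$ tacitly assumes $f(C)\not\subset E$. When $f(C)\subset E$ (equivalently $\pi(f(C))\subset Y$), there is no strict-transform description, and the intersection $[E]\cdot d=-1$ no longer forces a contradiction, since curves contained in $E$ can meet $[E]$ negatively. You would need a separate argument bounding the locus of irreducible curves of class $d$ lying in $E$. Secondly, the assertion ``exactly one fiber subcurve $C_{\mathrm{fib}}\cong\mathbb{P}^1$ of class $[X_{s_{n-1}}]$'' is too strong: the fiber part can have class $2\alpha_{n-1,n}$ (two separate fibers, or a degree-two cover of one fiber), with $C_{\mathrm{main}}$ of class $\alpha_{i,n-1}$, and these strata are potentially obstructed. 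Your final paragraph acknowledges this bookkeeping is left undone; the paper's approach sidesteps it entirely.
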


\begin{proof} We discuss the degrees in two cases.
\medbreak
\paragraph{\bf Case $i=n-1$}
In this case,  $\pi_*d = 0$. Therefore any stable map of degree $d$ is contained in a fiber of $\pi$ at some point of $Y$.
This defines a morphism from the moduli space to $\mathcal{P}$ whose image is contained in $Y$.
Denoting ${\rm ev}=ev_1\times ev_2$ and   by $\Delta$ the diagonal map, we have
$$\xymatrix{
\overline{\mathcal{M}}_{0, 2}(X,d)\ar[r]^-{{\rm ev}}\ar[d] & X\times X\ar[d]\\
\mathcal{P}\ar[r]^-{\Delta} & \mathcal{P}\times \mathcal{P}}$$
The space $\overline{\mathcal{M}}_{0,2}(X, d)$ has expected dimension
$$\dim X+\deg_X q_{n-1}^2+2-3=\dim X+1.$$
The image of ${\rm ev}$ lies in the preimage of $Y$ of $X\times_{\mathcal{P}} X$, which is a $\mathbb{P}^1\times \mathbb{P}^1$-bundle over $Y$.
Thus its dimension is
$$\dim Y+2=\dim X-2+2=\dim X.$$
Thus
${\rm ev}_*[\overline{\mathcal{M}}_{0,2}(X,d)]^{\rm vir}=0$.

\medbreak
\paragraph{\bf Case $i<n-1$}
We have a similar commutative diagram
$$\xymatrix{
\overline{\mathcal{M}}_{0,2}(X, d)\ar[r]^-{{\rm ev}}\ar[d]_{\hat{\pi}} & X\times X\ar[d]^{\pi\times\pi}\\
\overline{\mathcal{M}}_{0, 2}(\mathcal{P},\pi_*{d})\ar[r]^-{{\rm ev}_{\mathcal{P}}} & \mathcal{P}\times \mathcal{P}.}$$
Note that $\deg_X \alpha_{j, j+1}=2=\deg_\mathcal{P}\pi_*\alpha_{j, j+1}$ for $j<n-2$, \quad $\deg_X \alpha_{n-2, n-1}=2$ $=\deg_\mathcal{P}\pi_*\alpha_{n-2, n-1}-1$ and  $\deg_X \alpha_{n-1, n}=1$. Hence, we have
\begin{align*}
\exp\dim \overline{\mathcal{M}}_{0,2}(X, d) & = \dim X+\deg_X q^d+2-3,\\
\exp\dim \overline{\mathcal{M}}_{0, 2}(\mathcal{P},\pi_*{d}) & =
\dim \overline{\mathcal{M}}_{0, 2}(\mathcal{P},\pi_*{d})  =
\exp\dim \overline{\mathcal{M}}_{0,2}(X, d)-1.
\end{align*}
Moreover the map $\hat{\pi}$ cannot be surjective, since any stable map in the image has an extra constrain that $C$ meets $Y$.
Combining with the known fact that $\overline{\mathcal{M}}_{0, 2}(\mathcal{P}, \pi_*d)$ is irreducible, this reduces one more dimension:
$$\dim(\operatorname{im}(\hat{\pi}))\leq \dim \overline{\mathcal{M}}_{0, 2}(\mathcal{P},\pi_*{d})-1.$$

As a result,
${\rm ev}_*[\overline{\mathcal{M}}_{0,2}(X, d)]^{\rm vir}$ is supported over
$$(\pi\times \pi)^{-1}(\operatorname{im}({\rm ev}_{\mathcal{P}}\circ \hat{\pi})).$$
Since the fibers of $\pi\times \pi$ have dimension at most $2$, it has dimension at most
\begin{align*}
\dim(\operatorname{im}({\rm ev}_{\mathcal{P}}\circ \hat{\pi}))+2
& \leq
\dim(\operatorname{im}\hat{\pi})+2
\\
& \leq
\dim \overline{\mathcal{M}}_{0,2}(\mathcal{P},\pi_*{d})-1+2
={\rm exp}\dim \overline{\mathcal{M}}_{0,2}(X, d).
\end{align*}
If ${\rm ev}_*[\overline{\mathcal{M}}_{0,2}(X, d)]^{\rm vir}\neq 0$, then equality must hold throughout.
{Since  $\pi\times \pi$ has two-dimensional fibers only over points of $Y\times Y$}, equality can hold only if
${\rm ev}_{\mathcal{P}}$ restricts to a morphism $Z_1\to   Z_2$ that is generically finite, where $Z_1$ is  a component of $\operatorname{im}\hat{\pi}$ of codimension $1$ in $\overline{\mathcal{M}}_{0, 2}(\mathcal{P},\pi_*{d})$
and $Z_2$ is a component of $\operatorname{im}({\rm ev}_{\mathcal{P}}\circ \hat \pi)$ contained in $Y\times Y$. By (the proof of) \cite[Lemma 3.8]{BCMP}, $Z_2$ is contained in a locally trivial fibration over   $Y_1\subset Y$ with fiber $\Gamma^{\mathcal{P}}_{\pi_*d}(y)$, where $Y_1$ denotes the natural projection of $Z_2\subset Y\times Y$ to the first factor. By \cite[Theorem 5.1]{BuMi}, the fiber $\Gamma^{\mathcal{P}}_{\pi_*d}(y)$
is a Schubert variety of $\mathcal{P}$, indexed by the permutation $z_{\pi_*d}^{\mathcal{P}}=z_{\alpha_{in}}s_{n-1}=t_{in}s_{n-1}$. Thus it is of dimension $\ell(z_{\pi_*d}^{\mathcal{P}})=\ell(t_{in})-1$.
\begin{align*}
    \dim Z_2&\leq  \dim Y_1+\dim \Gamma_{\pi_*d}^{\mathcal{P}}(y)
\leq \dim Y+\ell(t_{in})-1=\dim X-2+2(n-i)-1-1,\\
 \dim Z_1&=\dim \overline{\mathcal{M}}_{0,2}(\mathcal{P},\pi_*d) -1=  \exp\dim \overline{\mathcal{M}}_{0,2}(X, d)-2=\dim X+2(n-i)-1-2,
\end{align*}
 resulting in  a contradiction $0=\dim Z_2-\dim Z_1\leq -1$.
\end{proof}

\subsection{Proof of \cref{thm: QCFbody}}
 Note that $\{PD([X_u])\}_u$ is the dual basis of $\{\xi^u\}_u$ with respect to the Poincar\'e pairing.
 Write $PD([X_u])=\sum_{\gamma}a_{\gamma}^{u} \xi^{\gamma}$.
Using the projection formula, $$\delta_{u, w}=\int_{[X]} PD([X_u]) \cup \xi^{w}=\int_{[X]} \iota^*(\sum_{\gamma}a_{\gamma}^{u} \sigma^{\gamma})  \cup \iota^*\sigma^{w}=\int_{[F\ell_{n}]} \sum_{\gamma}a_{\gamma}^{u} \sigma^{\gamma} \cup \iota_*(\iota^*\sigma^{w}).$$
Note $\iota_*(\iota^*\sigma^{w})
=\iota_*(\iota^*\sigma^{w}\cup \xi^{\rm id})=\sigma^w\cup PD([X])=\sigma^{w}\cup \sigma^{s_{n-1}}$.
The permutation $u$ varies in $S_n$ with $u(n)\neq 1$. Hence, $$\sum_{\gamma}a_{\gamma}^{w} \sigma^{\gamma} \cup \sigma^{s_{n-1}} =(\sigma^{w})^{\vee}+\sum_{\eta(n)=1}b_\eta(\sigma^\eta)^\vee.$$
 We have
\begin{align*}
    \xi^{s_r} \star \xi^{u}&=\sum_{w,d} \langle \xi^{s_r}, \xi^{u}, PD([X_w]) \rangle_{0,3,d}^{X}  \xi^{w} q^{d}  \\
    &=\sum_{w \leq w_0s_{n-1}} \langle \xi^{s_r}, \xi^{u}, PD([X_w]) \rangle_{0,3,0}^{X} \xi^{w}+ \sum_{ w\leq w_0s_{n-1},d \neq 0 } \langle \xi^{s_r}, \xi^{u}, PD([X_w]) \rangle_{0,3,d}^{X} \xi^{w} q^{d}.
\end{align*}
By \cref{prop: d_vanish} and \cref{prop: d_specific}, there are no $q^d$-terms in the second sum whenever $d_{n-1}\geq 2$. By the divisor axiom in Gromov-Witten theory, for $d\neq 0$ we have
$\langle \xi^{s_r}, \xi^{u}, PD([X_v]) \rangle_{0,3,d}^{X}=\int_{d} \xi^{s_{r}} \langle \xi^{u}, \sum_{\gamma}a_{\gamma}^{v} \xi^{\gamma} \rangle_{0,2,d}$.

For $d_{n-1}=1$, by \cref{lowdg GW},  we have
\begin{align*}
         &\int_{d} \xi^{s_{r}} \langle \xi^{u}, \sum_{\gamma}a_{\gamma}^{w} \xi^{\gamma} \rangle_{0,2,d}^{X} q^{d}  \\
         =&\int_{d} \sigma^{s_{r}} \langle \sigma^{u}\cup \sigma^{s_{n-1}}, \sum_{\gamma}a_{\gamma}^{w}\sigma^{\gamma} \cup \sigma^{s_{n-1}} \rangle_{0,2,d}^{F\ell_n}q^{d}- \int_{d} \xi^{s_{r}}\delta_{q^{d},q_{n-1}}\langle  \sigma^{u} , \sum_{\gamma}a_{\gamma}^{w}\sigma^{\gamma} , \sigma^{s_{n-1}} \rangle_{0,3,0}^{F\ell_n}q_{n-1} \\
         =&\int_{d} \sigma^{s_{r}} \langle \sigma^{u}\cup \sigma^{s_{n-1}}, \sum_{\gamma}a_{\gamma}^{w}\sigma^{\gamma} \cup \sigma^{s_{n-1}} \rangle_{0,2,d}^{F\ell_n}q^{d}- \int_{d} \xi^{s_{r}}\delta_{q^{d},q_{n-1}} q_{n-1}\int_{[F\ell_n]} \sigma^{u}\cup (\sum_{\gamma}a_{\gamma}^{w}\sigma^{\gamma} )\cup \sigma^{s_{n-1}} \\
          =&\langle \sigma^{s_r}, \sigma^{u}\cup \sigma^{s_{n-1}}, (\sigma^{w})^{\vee} +\sum_{\eta(n)=1}b_\eta(\sigma^\eta)^\vee\rangle_{0,3,d}^{F\ell_n}q^{d}- \int_{d} \xi^{s_{r}}\delta_{q^{d},q_{n-1}} q_{n-1}\int_{[F\ell_n]} \sigma^{u}\cup ((\sigma^{w})^{\vee} +\sum_{\eta(n)=1}b_\eta(\sigma^\eta)^\vee)\\
           =&\langle \sigma^{s_r}, \sigma^{u}\cup \sigma^{s_{n-1}}, (\sigma^{w})^{\vee} \rangle_{0,3,d}^{F\ell_n}q^{d}- \int_{d} \xi^{s_{r}} \delta_{q^{d},q_{n-1}} q_{n-1}\delta_{u, w}
     \end{align*}
The last equality holds by noting
$\int_{[F\ell_n]}\sigma^u\cup (\sigma^{\eta})^\vee=0$ (since $u(n)\neq 1)$ and
$$\langle \sigma^{s_r}, \sigma^{u}\cup \sigma^{s_{n-1}},  (\sigma^{\eta})^\vee\rangle_{0,3,d}^{F\ell_n}=\sum_{\tilde u}\langle \sigma^{s_r}, \sigma^{\tilde u},  (\sigma^{\eta})^\vee\rangle_{0,3,d}^{F\ell_n}=\sum_{\tilde u} N_{s_r, \tilde u}^{\eta, d}=0$$
 for any permutation $\eta$ with $\eta(n)=1$. Indeed, by the quantum Monk's formula for $F\ell_n$, $u\lessdot_{n-1} \tilde u$ and for $\hat w\in S_n$,
 $N_{s_r, \tilde u}^{\hat w, d}\neq 0$ only if $\tilde u\lessdot^q \hat w=\tilde u t_{an}$ for some $a$ (since $d_{n-1}=1$).
 \begin{enumerate}
     \item[i)] If $\tilde u\leq w_0s_{n-1}$, then $\hat w\leq \tilde u\leq w_0s_{n-1}$, i.e. $\hat w(n)\neq 1$;
     \item[ii)] If   $\tilde u\not \leq w_0s_{n-1}$, i.e. $\tilde u(n)=1$, then $\hat w(n)=\tilde ut_{an}(n)=\tilde u(a)\neq \tilde u(n)=1$.
 \end{enumerate}
Thus the sum  vanishes for any $\eta$ with $\eta(n)=1$.

 For $d \neq 0$ with $d_{n-1}=0$, by \cref{lowdg00}, we have
 \begin{align*}
         \int_{d} \xi^{s_{r}} \langle \xi^{u}, \sum_{\gamma}a_{\gamma}^{w} \xi^{\gamma} \rangle_{0,2,d}^{X} q^{d}
         &=\int_{d} \sigma^{s_{r}} \langle \sigma^{u}, \sum_{\gamma}a_{\gamma}^{w}\sigma^{\gamma} \cup \sigma^{s_{n-1}} \rangle_{0,2,d}^{F\ell_n}q^{d} \\
         &=\langle \sigma^{s_r}, \sigma^{u}, (\sigma^{w})^{\vee}+\sum_{\eta(n)=1}b_\eta (\sigma^\eta)^\vee \rangle_{0,3,d}^{F\ell_n}q^{d}\\
       & =\langle \sigma^{s_r}, \sigma^{u}, (\sigma^{w})^{\vee}\rangle_{0,3,d}^{F\ell_n}q^{d}.
     \end{align*}
The last equality holds again by noting
$\langle \sigma^{s_r}, \sigma^{u}, (\sigma^{\hat w})^{\vee}\rangle_{0,3,d}^{F\ell_n}q^{d}=0$ unless $u\lessdot_r^{q} \hat w$, implying $\hat w(n)\neq 1$.
Hence, we are done. \hfill $\Box$

\begin{remark}\label{lengthQCF}
   The arguments  i) and ii) in the above proof say that
   for any $u, w\in S_n$ and any $a, k$, the hypothesis $u\lessdot_{n-1} wt_{an} \lessdot_k^q w$ implies $w(n)\neq 1$, i.e. $w\leq w_0s_{n-1}$.
\end{remark}
 \section{Quantum Schubert polynomials for  $X$}
This section is devoted to a proof of \cref{thm: QSP}, namely for any $w$, the quantum Schubert polynomial $\mathfrak{S}_w^q$ of Fomin, Gelfand and Postnikov represents the pullback Schubert class $\xi^w$, under the canonical ring isomorphism in \cref{thm: QHX}.

Recall $$q_{ab}:= q_aq_{a+1}\cdots q_{b-1}\quad\mbox{for}\quad  1\leq a<b\leq n.$$
  \subsection{Quantum Schubert polynomials}

The classical Schubert polynomials were introduced by
   Lascoux and  Sch\"utzenberger
  \cite{LascouxSchu} by using the divided difference operators $\partial_i$  of Bernstein, Gelfand and Gelfand \cite{BGG}.
Precisely, for $f=f(x_1, \cdots, x_n)\in \bbZ[x]$ and $w\in S_n$,   denote $wf=f(x_{w^{-1}(1)}, \cdots, x_{w^{-1}(n)})$. Then $\partial_if:={f-s_if\over x_i-x_{i+1}}\in \bbZ[x]$, and the classical Schubert polynomials $\mathfrak{S}_w(x)$ are recursively defined by
\begin{equation}
    \mathfrak{S}_{w_0}=x^{n-1}_1x_2^{n-2}\cdots x_{n-1}\qquad \mbox{and}\quad \mathfrak{S}_{ws_i}=\partial_i\mathfrak{S}_w\quad \mbox{whenever} \quad \ell(ws_i)=\ell(w)-1.
\end{equation}
The following were shown in \cite{LascouxSchu}.
\begin{enumerate}
    \item $\Phi(\sigma^w)=[\mathfrak{S}_w(x)]$ under the canonical ring isomorphism $\Phi$ in \cref{prop:HFl}.
    \item $\{e^1_{i_1}e^2_{i_2}\cdots e^{n-1}_{i_{n-1}}\}_{0\leq i_j\leq j}$ form a $\bbZ$-basis of $\bbZ[x]$.
\end{enumerate}
Therefore, we have the linear expansion
$\mathfrak{S}_w=\sum \alpha_{i_1\ldots i_{n-1}}e^1_{i_1}e^2_{i_2}\cdots e^{n-1}_{i_{n-1}}$.
In \cite{FGP}, Fomin, Gelfand and Postnikov   introduced the quantum Schubert polynomial
\begin{equation}
    \mathfrak{S}_w^q:= \sum \alpha_{i_1\ldots i_{n-1}}E^1_{i_1}E^2_{i_2}\cdots E^{n-1}_{i_{n-1}}.
\end{equation}
They also showed $\Phi_q(\sigma^w)=[\mathfrak{S}_w^q]$,   under the   canonical ring isomorphism   \cite{GiKi, CF, Kim}
\begin{equation}\label{QHFlpresentation}
    \Phi_q: QH^*(F\ell_n, \bbZ)\longrightarrow\mathbb{Z}[x_1, \cdots, x_n, q_1, \cdots, q_{n-1}]/(E_1^n, \cdots, E_n^n).
\end{equation}

Recall the (quantum) $r$-Bruhat order $\lessdot_r$ (resp. $\lessdot_r^q$) defined in \cref{kBruhat} (resp.  \cref{quantumkBruhat}).
The following is the quantum Monk's formula on the level of polynomials, proved in the first half of \cite[Theorem 7.1]{FGP}.
\begin{prop}[Quantum Monk's formula]\label{prop: quantumMonkFlalg}
For $u\in S_n$ and $1\leq r<n$, in $\mathbb{Z}[x]$ we have
$$\mathfrak{S}_{s_r}^q\mathfrak{S}_{u}^q=(x_1+\cdots+x_r)\mathfrak{S}_{u}^q=\sum_{u\lessdot_rut_{ab}}\mathfrak{S}_{ut_{ab}}^q+\sum_{u\lessdot_r^q ut_{ck}}q_{ck}\mathfrak{S}_{ut_{ck}}^q.$$
\end{prop}
\begin{lemma}\label{Bruhuatcoverlem}
    Let $u,w\in S_n$ and $1\leq a<b\leq n$.
    \begin{enumerate}
        \item  $u\lessdot ut_{ab}$ if and only if
     $u(a)<u(b)$ and for any $a<c<b$, we have $u(c)\notin [u(a), u(b)]$.
     \item $u\lessdot^q ut_{ab}$  if and only if
     $u(a)>u(b)$ and for any $a<c<b$, we have $u(c)\in [u(b), u(a)]$.
    \end{enumerate}
\end{lemma}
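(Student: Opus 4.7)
The plan is to reduce both statements to a direct count of how the inversion set changes when $u$ is multiplied on the right by $t_{ab}$. Setting $v = ut_{ab}$, the values of $v$ differ from those of $u$ only at positions $a$ and $b$, where they are swapped. So the inversions of $v$ and $u$ agree on all pairs $(i,j)$ with $\{i,j\} \cap \{a,b\} = \emptyset$, and the length difference $\ell(v)-\ell(u)$ is governed entirely by pairs that involve $a$ or $b$.

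First I would dispose of positions $c$ outside the interval $[a,b]$: for such $c$, the two pairs touching $\{a,b\}$ involve the unordered multiset $\{(u(c),u(a)),(u(c),u(b))\}$, which is identical for $u$ and $v$, so the net contribution is zero. This reduces the entire computation to the pair $(a,b)$ itself together with the intermediate positions $a<c<b$. The pair $(a,b)$ contributes $+1$ to $\ell(v)-\ell(u)$ exactly when $u(a)<u(b)$, and $-1$ when $u(a)>u(b)$. For each intermediate $c$, a short case analysis on where $u(c)$ lies relative to $u(a)$ and $u(b)$ shows that the pair of pairs $(a,c),(c,b)$ contributes $+2$ precisely when $u(a)<u(c)<u(b)$, contributes $-2$ precisely when $u(b)<u(c)<u(a)$, and contributes $0$ otherwise.

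Combining, I would obtain the exact formula
\begin{equation*}
\ell(v)-\ell(u) \;=\; \varepsilon \;+\; 2\,\#\{c : a<c<b,\ u(c)\in (\min,\max)\},
\end{equation*}
where $\varepsilon=+1$ (resp.\ $-1$) and $(\min,\max)=(u(a),u(b))$ (resp.\ $(u(b),u(a))$) according to whether $u(a)<u(b)$ or $u(a)>u(b)$. Part (1) then follows: $\ell(v)-\ell(u)=1$ forces $u(a)<u(b)$ and no intermediate $u(c)$ lying in $(u(a),u(b))$, which (since $u(c)\ne u(a),u(b)$ for $c\ne a,b$) is equivalent to the closed-interval condition stated. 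Part (2) follows similarly: $\ell(v)-\ell(u)=-\ell(t_{ab})=-(2b-2a-1)$ forces $u(a)>u(b)$ and requires every one of the $b-a-1$ intermediate positions to satisfy $u(c)\in(u(b),u(a))$.

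I do not anticipate a real obstacle here; the entire lemma is a bookkeeping exercise on the inversion number, and the cleanest presentation is the case-by-case count above. The only mild subtlety worth flagging is that the statement uses the closed intervals $[u(a),u(b)]$ and $[u(b),u(a)]$, which is harmless because $u$ is a bijection and the endpoints $u(a),u(b)$ are attained only at the excluded positions $a,b$.
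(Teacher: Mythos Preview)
Your approach is exactly the paper's: the paper's proof is the single sentence ``The statement follows from a direct counting of the number of inversions,'' and you have carried out precisely that count. One slip to fix in the write-up: in your consolidated displayed formula the coefficient $2$ should also carry the sign $\varepsilon$ (so that $\ell(v)-\ell(u)=\varepsilon\bigl(1+2\,\#\{c:a<c<b,\ u(c)\in(\min,\max)\}\bigr)$); your case analysis in the preceding paragraph already has the signs right, and your conclusions for (1) and (2) are correct.
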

\begin{proof}
  Note $\ell(t_{ab})=2b-2a-1$. The statement follows from a direct counting of the number of inversions, which defines the length of a permutation.
\end{proof}
The next proposition is a special case of the second half of  \cite[Theorem 7.1]{FGP}, with a slightly more precise description than that in loc. cit.; see also \cite[Theorem 4]{LOTRZ}. This special case will play a crucial role in our proof of \cref{thm: QSP}. A permutation
$w\in S_n$ is said to have a descent  at the $k$-th position if $w(k+1)<w(k)$.
\begin{prop}[Transition equation]\label{InductiveQSchu}
     Let $w\in S_n\setminus\{  {\rm id}\}$. Denote by $i$ the last descent position of $w$. Take the maximal $j$ with $w(j)<w(i)$.
    Then  $u:=wt_{ij}$ satisfies $u\lessdot w$, and we have
    \begin{align*}
    \mathfrak{S}^q_w & =
    x_i\mathfrak{S}^q_{u}
    +\sum_{u\lessdot ut_{hi}} \mathfrak{S}^q_{ut_{hi}}
     + \sum_{u\lessdot^q ut_{hi}} q_{hi}
    \mathfrak{S}^q_{ut_{hi}}
    - \sum_{u\lessdot^q ut_{ik}} q_{ik}
    \mathfrak{S}^q_{ut_{ik}}.
    \end{align*}
\end{prop}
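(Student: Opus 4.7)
The plan is to deduce the transition equation from the quantum Monk's formula (Proposition~\ref{prop: quantumMonkFl}) applied twice to $\mathfrak{S}^q_u$, by subtracting the $r=i-1$ instance from the $r=i$ instance to isolate $x_i\mathfrak{S}^q_u$. Since the $r$-Monk expansion is a sum over pairs $(a,b)$ with $a\leq r<b$, the only pairs surviving the difference are those with $a=i,\,b>i$ (inherited with a plus sign) and those with $a<i,\,b=i$ (inherited with a minus sign). Explicitly, this yields
\begin{align*}
x_i\mathfrak{S}^q_u=\sum_{u\lessdot ut_{ib},\,b>i}\mathfrak{S}^q_{ut_{ib}}+\sum_{u\lessdot^q ut_{ib},\,b>i}q_{ib}\mathfrak{S}^q_{ut_{ib}}-\sum_{u\lessdot ut_{hi},\,h<i}\mathfrak{S}^q_{ut_{hi}}-\sum_{u\lessdot^q ut_{hi},\,h<i}q_{hi}\mathfrak{S}^q_{ut_{hi}}.
\end{align*}
The target identity will follow after rearrangement, once I establish the combinatorial claim that the only classical Bruhat cover $u\lessdot ut_{ib}$ with $b>i$ is $b=j$, producing the distinguished term $\mathfrak{S}^q_{ut_{ij}}=\mathfrak{S}^q_w$.

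To verify the claim I would apply Lemma~\ref{Bruhuatcoverlem}(1) and split on $b>i$. Since $i$ is the last descent of $w$, the values $w(i+1)<w(i+2)<\cdots<w(n)$ form a strictly increasing sequence, and by the maximality of $j$ one has $w(i+1),\ldots,w(j)<w(i)<w(j+1),\ldots,w(n)$. For $b=j$: $u(i)=w(j)<w(i)=u(j)$, and for every $i<c<j$ the value $u(c)=w(c)<w(j)=u(i)$ misses the open interval $(u(i),u(j))$; hence $u\lessdot ut_{ij}=w$, which in particular shows $u\lessdot w$ as asserted in the statement. For $i<b<j$: $u(b)=w(b)<w(j)=u(i)$ immediately violates the order condition $u(i)<u(b)$. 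For $b>j$: taking the intermediate index $c=j$ gives $u(j)=w(i)$, which lies strictly in $(w(j),w(b))=(u(i),u(b))$, again violating Lemma~\ref{Bruhuatcoverlem}(1). These three cases exhaust all possibilities.

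Substituting the claim into the displayed identity and rearranging, the two sums with $h<i$ give the second and third sums of the statement, while the quantum sum with $b>i$ becomes the final negative sum over $t_{ik}$ with $k>i$. The main obstacle is the case $b>j$ in the combinatorial claim: it crucially uses the maximality of $j$, because the ``intermediate'' value $u(j)=w(i)$ is exactly what blocks the cover. Weakening the choice of either $i$ or $j$ would leave spurious classical covers in the first sum and destroy the identity; everything else is bookkeeping.
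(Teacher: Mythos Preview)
Your proposal is correct and follows essentially the same approach as the paper's proof: subtract the $r=i-1$ quantum Monk formula from the $r=i$ one to isolate $x_i\mathfrak{S}^q_u$, then use the combinatorial claim that the only classical cover $u\lessdot ut_{ib}$ with $b>i$ is $b=j$. Your case analysis on $b<j$, $b=j$, $b>j$ is a slightly more explicit packaging of the paper's argument (which first deduces $j<b$ from $u(i)<u(b)$ and the fact that $w$ is increasing past $i$, and then derives the contradiction at $c=j$), but the content is the same.
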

\begin{proof} It follows directly from the definition that $i<j$ and that for any $i<c<j$, $u(c)=w(c)<w(j)=u(i)$ (where the inequality holds since $i$ is the last descent position). Thus $u\lessdot w$ by \cref{Bruhuatcoverlem} (1).

   By \cref{prop: quantumMonkFlalg},   we compare the two quantum Monk's formulas
    \begin{align*}
        (x_1+x_2+\cdots +x_{i-1})\mathfrak{S}_u^q&=\sum_{u\lessdot_{i-1}ut_{ab}}\mathfrak{S}_{ut_{ab}}^q+\sum_{u\lessdot_{i-1}^q ut_{ck}}q_{ck}\mathfrak{S}_{ut_{ck}}^q;\\
        (x_1+x_2+\cdots +x_{i})\mathfrak{S}_u^q&=\sum_{u\lessdot_iut_{ab}}\mathfrak{S}_{ut_{ab}}^q+\sum_{u\lessdot_i^q ut_{ck}}q_{ck}\mathfrak{S}_{ut_{ck}}^q.
    \end{align*}

   Notice that if $c\leq i-1<i<k$, then  $u\lessdot_{i-1}^q ut_{ck}$  implies $u\lessdot_{i}^q ut_{ck}$.
   Therefore the difference between the quantum parts of  the two products  occurs exactly when either $k=i$ in the first product or $c=i$ in the second product.
  Hence, the quantum part of the statement follows.

  For the classical part, the same argument applies once we show that $u\lessdot ut_{ib}$ implies $b=j$. Indeed, assume
  $u\lessdot ut_{ib}$ for some $b\neq j$. Then  $w(j)=u(i)<u(b)=w(b)$ by \cref{Bruhuatcoverlem}.
   Since $i$ is the last descent position of $w$, it follows that $j<b$.
   Furthermore, we have $u(j)=w(i)<w(b)=u(b)$, since $j$ is  maximal with respect to $w(j)<w(i)$.
   But then we would have $i<j<b$ and $u(i)<u(j)<u(b)$, contradicting   $u\lessdot ut_{ib}$ by \cref{Bruhuatcoverlem}.
\end{proof}

\begin{lemma}\label{InductiveCor}
   Let $w\in S_n\setminus\{  {\rm id}\}$ with $w(n)\neq 1$. Then all the permutations $v$ occurring on the right hand side of the formula of $\mathfrak{S}_w^q$ in  \cref{InductiveQSchu} satisfy $v(n)\neq 1$.
\end{lemma}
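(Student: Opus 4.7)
The plan is to inspect each of the four types of permutations $v$ appearing on the right-hand side of the transition equation of \cref{InductiveQSchu} and verify $v(n)\neq 1$ directly. The key preliminary observation is that $i$, being the last descent position of $w$, satisfies $i\leq n-1$, hence $i<n$; and that $u=wt_{ij}$ differs from $w$ only at the positions $i$ and $j$, with $u(i)=w(j)$, $u(j)=w(i)$, and $u(k)=w(k)$ for $k\neq i,j$.

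First I would compute $u(n)$ in the two possible cases and show $u(n)\neq 1$ in both. If $j<n$, then $u(n)=w(n)\neq 1$ by hypothesis. If $j=n$, then from $w(j)<w(i)$ and $w(n)\neq 1$ (so $w(n)\geq 2$) we obtain $u(n)=w(i)>w(n)\geq 2$, so again $u(n)\neq 1$. This already disposes of the leading term $v=u$ in $x_i\mathfrak{S}^q_u$. For the remaining three sums, the terms $v=ut_{hi}$ with $h<i$ (from both the classical and the quantum cover sums) and $v=ut_{ik}$ with $i<k<n$ all have $v(n)=u(n)\neq 1$, since the $n$-th position is not swapped.

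The only remaining and slightly delicate case is $v=ut_{in}$ in the fourth sum, which occurs under the quantum cover hypothesis $u\lessdot^q ut_{in}$. Here $v(n)=u(i)=w(j)$, and a priori nothing precludes $w(j)=1$. I would rule this out by applying \cref{Bruhuatcoverlem}(2) to $u\lessdot^q ut_{in}$, which yields $u(i)>u(n)$; combined with $u(n)\geq 2$ from the previous step, this forces $u(i)\geq 3$, hence $v(n)\neq 1$. Extracting $u(i)\neq 1$ from the quantum Bruhat cover inequality is really the only nontrivial point in the proof; all the other cases reduce to immediate position bookkeeping, using the fact that $i<n$ and that the swaps in the first three sums leave position $n$ fixed.
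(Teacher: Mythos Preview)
Your proof is correct and follows essentially the same case-by-case structure as the paper's. The treatment of $v=u$ and of $v=ut_{hi}$ is identical in spirit (the paper compresses your two subcases for $u(n)$ into the single chain $u(n)\geq u(j)>u(i)\geq 1$, but the content is the same).

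The one genuine difference is in the fourth sum. You allow $k=n$ and argue directly that $v(n)=u(i)>u(n)\geq 2$. The paper instead observes that $k=n$ \emph{cannot occur}: from $u\lessdot^q ut_{ik}$ one gets $u(k)<u(i)=w(j)$; since $k\neq j$ (else $ut_{ik}=w$ and $u\lessdot w$ contradicts $u\lessdot^q w$) we have $u(k)=w(k)$, and then $w(k)<w(j)$ together with the fact that $w$ is increasing past position $i$ forces $k<j\leq n$. So the case you single out as ``the only nontrivial point'' is in fact vacuous. Your argument for it is nonetheless valid, so nothing is lost for the lemma itself; the paper's route has the small bonus that the inequality $k<j$ is reused verbatim in the proof of \cref{GeometricTransitionformula} (the case $i<n-1$).
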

\begin{proof}
 With the same notation as in \cref{InductiveQSchu}, if $v=u=wt_{ij}$, then $u(n)\geq u(j)>u(i)\geq 1$. If $v=ut_{hi}$, then $v(n)=u(n)\neq 1$ by noting $i<n$. It remains to discuss the case $v=ut_{ik}$.
Since $u\lessdot^q ut_{ik}$, by \cref{Bruhuatcoverlem}, $u(k)<u(i)=w(j)$, so $k<j$ since $i$ is the last descent position. In particular, $k\neq n$. Thus, $v(n)=u(n)\neq 1$.
\end{proof}

\begin{defn}\label{linearorder}
   For $u, w\in S_n$, we say $u\prec w$ if and only if
  $$(\ell(u),-u(n), -u(n-1),\cdots, -u(1))<(\ell(w),-w(n), -w(n-1),\cdots, -w(1))$$
  with respect to the  lexicographic order.  This defines a total order $\prec$ on $S_n$.
\end{defn}
\begin{lemma}\label{orderinInductiveformula}
    Let $w\in S_n\setminus\{  {\rm id}\}$. Then all the permutations occurring on the right hand side of the formula of $\mathfrak{S}_w^q$ in  \cref{InductiveQSchu} are strictly smaller than $w$ with the order $\prec$.
\end{lemma}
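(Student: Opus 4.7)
Keep the notation of \cref{InductiveQSchu}: let $i$ be the last descent position of $w$, let $j$ be maximal with $w(j)<w(i)$, and set $u=wt_{ij}$. The plan is to inspect each of the four families of permutations appearing on the right hand side of the transition equation and show directly that each is $\prec w$.

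Three of the four families are controlled solely by the first coordinate $\ell(\cdot)$ of the tuple defining $\prec$. The leading term involves $v=u$, for which $\ell(u)=\ell(w)-1<\ell(w)$. For the two quantum families $v=ut_{hi}$ with $u\lessdot^q ut_{hi}$ and $v=ut_{ik}$ with $u\lessdot^q ut_{ik}$, the definition of quantum Bruhat cover gives
\[
\ell(v)=\ell(u)-\ell(t_{hi})\leq \ell(u)-1=\ell(w)-2,
\]
and analogously for $t_{ik}$. Hence in each of these three cases $v\prec w$ by length alone.

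The only delicate case is the classical cover $v=ut_{hi}$ with $h<i$ and $u\lessdot ut_{hi}$, since here $\ell(v)=\ell(u)+1=\ell(w)$ and the comparison has to be refined using the remaining coordinates. I would split on whether $j=n$ or $j<n$. If $j=n$, then $u=wt_{in}$ gives $u(n)=w(i)$, and since $h<i<n$ the swap $t_{hi}$ fixes position $n$, so $v(n)=w(i)>w(n)$, using $w(n)=w(j)<w(i)$; the lexicographic comparison is therefore already decided at the rightmost entry. If $j<n$, then $u$ agrees with $w$ outside positions $\{i,j\}$ and $v=ut_{hi}$ agrees with $u$ outside $\{h,i\}$; since $h<i<j<n$, we have $v(k)=w(k)$ for every $k>j$, while at position $j$
\[
v(j)=u(j)=w(i)>w(j)
\]
by the defining property of $j$. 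Hence the first position from the right at which $v$ and $w$ differ is $j$, and $-v(j)<-w(j)$, giving $v\prec w$.

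The whole argument is essentially book-keeping; the only substantive point is ensuring in the classical case that, when reading the one-line notations of $v$ and $w$ from right to left, the first disagreement occurs at a position lying in $\{h,i,j\}$ and has the right sign. Since the composition $t_{ij}\cdot t_{hi}$ only permutes entries in positions $\{h,i,j\}$ with $h<i\leq j\leq n$, the two sub-cases $j=n$ and $j<n$ exhaust the possibilities and in each the decisive inequality $w(i)>w(j)$ (respectively $w(i)>w(n)$) is built into the choice of $i$ and $j$.
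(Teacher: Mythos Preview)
Your proof is correct and follows essentially the same approach as the paper: the three families where the length drops are handled by the first coordinate of $\prec$, and in the classical case $v=ut_{hi}$ you compare $v$ and $w$ from the right, observing agreement in positions $>j$ and the decisive inequality $v(j)=w(i)>w(j)$ at position $j$. The only difference is cosmetic: your case split on $j=n$ versus $j<n$ is unnecessary, since in both sub-cases the first disagreement from the right occurs at position $j$ with the same inequality, which is exactly how the paper argues it in one line.
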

\begin{proof}
    Let $u$ be as in  \cref{InductiveQSchu}. Then $u\prec w$ since $\ell(u)=\ell(w)-1$. For $u\lessdot ut_{hi}$,  we have $\ell(w)=\ell(ut_{hi})=\ell(u)+1$. By \cref{Bruhuatcoverlem}, we have $w(j)=u(i)<u(j)=ut_{hi}(j)$. Combining this  with the property $w(a)=u(a)=ut_{hi}(a)$ for $a>j$, we obtain $ut_{hi}\prec w$. Permutations in the quantum part are all of length   smaller than $\ell(w)$, and hence are strictly   smaller than $w$ with respect to the total order $\prec$.
\end{proof}

\subsection{Proof of \cref{thm: QSP}}
To achieve our aim, we first show that the pullback Schubert classes $\xi^w$ admit exactly the same transition equations in the quantum cohomology $QH^*(X)$ as that for $\mathfrak{S}_w^q$ on the level of polynomials.
\begin{prop}\label{GeometricTransitionformula}
   With the same notation  as in \cref{InductiveQSchu}, in    $QH^*(X)$ we have
    \begin{align*}
    \xi^w & =
    x_i\xi^{u}
   +\sum_{u\lessdot ut_{hi}} \xi^{ut_{hi}}
     + \sum_{u\lessdot^q ut_{hi}} q_{hi}
    \xi^{ut_{hi}}
    - \sum_{u\lessdot^q ut_{ik}} q_{ik}
    \xi^{ut_{ik}}.
    \end{align*}
\end{prop}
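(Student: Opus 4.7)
Under the Borel presentation of \cref{thm: QHX}, the divisor class $x_i$ is represented by $\xi^{s_i} - \xi^{s_{i-1}}$ (using $\xi^{s_k} = x_1 + \cdots + x_k$ and the convention $\xi^{s_0} := 0$), so
\[
x_i \star \xi^u \;=\; \xi^{s_i} \star \xi^u - \xi^{s_{i-1}} \star \xi^u.
\]
My plan is to expand both products using the quantum Monk--Chevalley formula in \cref{thm: QCF} and take the difference, mirroring the polynomial derivation in the proof of \cref{InductiveQSchu}.

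For the first two sums of \cref{thm: QCF} (the classical term and the quantum term with $b < n$), taking the difference between $k = i$ and $k = i - 1$ picks up the new $(a = i)$ summands and drops the $(b = i)$ summands. Running the same combinatorial argument as in the proof of \cref{InductiveQSchu}, the ``last descent + maximality'' structure of $u = wt_{ij}$ forces the unique classical $(a = i)$ cover to be $b = j$, yielding $\xi^w$. The remaining contributions assemble exactly into $-\sum_{u \lessdot ut_{hi}} \xi^{ut_{hi}}$, $-\sum_{u \lessdot^q ut_{hi}} q_{hi} \xi^{ut_{hi}}$ and $+\sum_{u \lessdot^q ut_{ik}} q_{ik} \xi^{ut_{ik}}$, where by \cref{InductiveCor} the last sum is automatically supported on $k < n$; the same lemma ensures all occurring permutations lie in $\{v : v \leq w_0 s_{n-1}\}$, so no terms are lost to the constraint in \cref{thm: QCF}.

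The main obstacle is to show that the third sum and the fourth term of \cref{thm: QCF} contribute zero in total to the difference, as the RHS of the proposition has no $d_{n-1} \geq 1$ term. The third-sum difference is supported on the $a = i$ summands, indexed by pairs $(w', i)$ with $w' = ut_{bn} t_{in}$ subject to both $u \lessdot ut_{bn}$ and $ut_{bn} \lessdot^q ut_{bn} t_{in}$. I would run a case analysis on $b$ using \cref{Bruhuatcoverlem}: when $b = i$, the combined cover conditions force $\ell(t_{in}) = 1$, hence $i = n - 1$, and the unique resulting contribution $q_{n-1} \xi^u$ cancels the fourth-term correction $-\delta_{i, n-1} q_{n-1} \xi^u$ exactly; when $b = j$ with $j < n$, the quantum cover condition would require $w(j) > w(i)$, contradicting $w(j) < w(i)$; when $b \notin \{i, j\}$, a direct check at the intermediate index $c = j$ (or, when $j = n$, by comparing the classical and quantum cover inequalities on $w(b)$) produces a contradiction. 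This case analysis is the delicate step, but it is fully combinatorial once \cref{Bruhuatcoverlem} has been invoked.
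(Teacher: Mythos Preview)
Your approach is essentially the same as the paper's: take the difference of the quantum Monk--Chevalley formulas for $\xi^{s_i}\star\xi^u$ and $\xi^{s_{i-1}}\star\xi^u$, handle the first two sums exactly as in \cref{InductiveQSchu}, and then run a combinatorial case analysis via \cref{Bruhuatcoverlem} to show the third sum and the $\delta_{i,n-1}$ correction cancel. The paper organizes that last case analysis by first splitting on $i=n-1$ versus $i<n-1$ and then on the cover index (using the intermediate position $c=n-1$ uniformly), whereas you split directly on $b$ and then on whether $j<n$; both routings verify the same finite list of Bruhat-cover contradictions, and one minor point is that your appeal to \cref{InductiveCor} for ``$k<n$'' really uses the argument in its proof rather than its statement.
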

\begin{proof}
 Recall $w=ut_{ij}$ and that $i$ is the last descent of $w$, so  $i\leq n-1$ and $x_i=\xi^{s_i}-\xi^{s_{i-1}}$, where $\xi^{s_0}:=0$.  We compare the two quantum Monk-Chevalley formulas

\begin{align*}
    \xi^{s_{i-1}}\star \xi^u&= \!\!\! \!\sum_{u\lessdot_{i-1} ut_{ab}\leq w_0s_{n-1}}  \!\!\!\!\!\!\!\!\!\xi^{ut_{ab}}+ \!\!\!\!\!   \sum_{u\lessdot_{i-1}^q ut_{ck}\,\,{\rm with}\,\, k<n}  \!\!\!\!\!\!\!\!\!\xi^{ut_{ck}}q_{ck}+    \!\!\!\!\!\!\!\!\!\sum_{u\lessdot  ut_{an}\lessdot_{i-1}^q ut_{an}t_{cn} \leq w_0s_{n-1}}  \!\!\!\!\!\!\!\!\!\!\!\!\xi^{ut_{an}t_{cn}}q_{cn}-0,\\
     \xi^{s_{i}}\star \xi^u&=\sum_{u\lessdot_{i} ut_{ab}\leq w_0s_{n-1}} \!\!\!\!\!\!\!\!\!\xi^{ut_{ab}}+    \sum_{u\lessdot_{i}^q ut_{ck}\,\,{\rm with}\,\, k<n} \!\!\!\!\!\!\!\!\!\xi^{ut_{ck}}q_{ck}+   \!\!\! \!\!\! \sum_{u\lessdot  ut_{an}\lessdot_{i}^q ut_{an}t_{cn} \leq w_0s_{n-1}} \!\!\!\!\!\!\!\!\!\xi^{ut_{an}t_{cn}}q_{cn}-\delta_{i,n-1}q_{n-1}\xi^u.\\
\end{align*}

    The classical part follows from the same argument as in \cref{InductiveQSchu}, where the constraint $ut_{hi}\leq w_0s_{n-1}$ is redundant by \cref{InductiveCor}.
      Again note that if $c\leq i-1<i<k$, then  $u\lessdot_{i-1}^q ut_{ck}$  implies $u\lessdot_{i}^q ut_{ck}$.
   Therefore the difference between the   quantum parts  of the two quantum products  occurs exactly when either $k=i$ in the first product or $c=i$ in the second product.
The   $k=i$ part in the first product is exactly the second sum in the equation for $\xi^w$ in the statement.
  It remains to show that the rest is given by the $c=i$ part of $\xi^{s_i}\star \xi^u$   together with  $\delta_{i, n-1}q_{n-1}\xi^u$, namely to show
 \begin{equation}\label{eqn_lastpart}
     \sum_{u\lessdot^q ut_{ik}}
    \xi^{ut_{ik}}q_{ik}=     \sum_{u\lessdot^q ut_{ik}\,\,{\rm with}\,\, k<n} \!\!\!\!\!\!\!\!\!\xi^{ut_{ik}}q_{ik}+   \!\!\! \!\!\! \sum_{u\lessdot  ut_{an}\lessdot_{i}^q ut_{an}t_{in} \leq w_0s_{n-1}} \!\!\!\!\!\!\!\!\!\xi^{ut_{an}t_{in}}q_{in}-\delta_{i,n-1}q_{n-1}\xi^u.
 \end{equation}
 Denote by RHS (resp. LHS) the right (resp. left) hand side of the above equation to prove.
    \begin{enumerate}
        \item Case $i=n-1$. Then  $j=n$, the sum on the LHS is empty (otherwise we would have  $u\lessdot^q ut_{ik}=ut_{n-1, n}=w$, contradicting $u\lessdot w$),   and the first sum on the  RHS is empty as well.
     The constraints $$u\lessdot ut_{an}\lessdot^q ut_{an}t_{in}$$ imply $a=n-1$ by   \cref{Bruhuatcoverlem}. (Otherwise, $a<n-1=i$, then $ut_{an}(i)=u(i)<u(a)=ut_{an}(n)$, a contradiction.) Then we have $ut_{an}t_{in}=u$, which automatically satisfies $u\leq w_0s_{n-1}$ by \cref{InductiveCor}.
     Hence, $\mbox{RHS}=0+\xi^uq_{n-1}-\xi^uq_{n-1}=0=\mbox{LHS}$.
        \item Case $i<n-1$. Then $\delta_{i,n-1}=0$.

        For  $u\lessdot^qut_{ik}$ on the $\mbox{LHS}$, we have $k\neq j$ since $u\lessdot ut_{ij}=w$.
        By \cref{Bruhuatcoverlem},  $w(k)=u(k)<u(i)=w(j)$. Since $i$ is the last descent, we have $k<j$. In particular, $k<n$, thus the $\mbox{LHS}$ is equal to the first sum of the $\mbox{RHS}$. It remains to show that the second sum on the $\mbox{RHS}$ is zero. Suppose we have $u\lessdot ut_{an}\lessdot_i^qut_{an}t_{in}$. By the choice of $i,j$, $u$ also has no descent after $i$. We also note that $j$ is the minimal integer greater than $i$ that satisfies $u(i)<u(j)$. These two properties will be used over and over again in the following argument.
        If $i<a<n-1$, then $u(n-1)\in [u(a),u(n)]$, contradicting \cref{Bruhuatcoverlem}. Therefore,
        either $a=n-1$ or $a\leq i$. If $a=n-1$, by \cref{Bruhuatcoverlem}, we have $u(n-1)=ut_{an}(n)<ut_{an}(i)=u(i)$, so $j=n$. Then $ut_{an}(n-1)=u(n)=u(j)>u(i)=ut_{an}(i)$. Then $ut_{an}(n-1)\notin [ut_{an}(n),ut_{an}(i)]$, contradicting \cref{Bruhuatcoverlem}. If $a= i$, then from $u\lessdot ut_{an}$ and  \cref{Bruhuatcoverlem}, we have $j=n$. Then $ut_{an}(n-1)=u(n-1)<u(i)=ut_{an}(n)$, so $ut_{an}(n-1)\notin [ut_{an}(n),ut_{an}(i)]$, contradicting \cref{Bruhuatcoverlem}.  If $a<i$, then by \cref{Bruhuatcoverlem}, we have $ut_{an}(n-1)=u(n-1)<u(a)=ut_{an}(n)$, so $ut_{an}(n-1)\notin [ut_{an}(n),ut_{an}(i)]$, contradicting \cref{Bruhuatcoverlem}.
        \qedhere

    \end{enumerate}
\end{proof}

 \bigskip

\begin{proof}[\textbf{Proof of  \cref{thm: QSP}}]
By (the proof of) \cref{thm: QHX}, the canonical ring isomorphism
\begin{equation*}\label{QHXpresentation}
      \Psi_q:  QH^*(X, \bbZ)\longrightarrow\mathbb{Z}[x_1, \cdots, x_n, q_1, \cdots, q_{n-1}]\left/\big(\hat E_1^n, \cdots, \hat E^n_{n-1}, E_{n-1}^{n-1}\big).\right.
    \end{equation*}
   satisfies $\Psi_q(\xi^{s_i})=[x_1+x_2+\cdots+x_i]$ for $1\leq i\leq n-1$.
   Thus $\Psi_q(\xi^{w})=[\mathfrak{S}_w^q]$ holds for $w={\rm id}$ and for   $w\in S_n$ with $\ell(w)=1$ (all of which satisfy $w\leq w_0s_{n-1}$).

  By \eqref{wnneq1},  $w\leq w_0s_{n-1}$ holds if and only if $w(n)\neq 1$. By \cref{GeometricTransitionformula},  \cref{InductiveCor} and \cref{orderinInductiveformula}, every $\xi^w$ with $w(n)\neq 1$ can be written
as a $\bbZ[q]$-linear combination of classes $\xi^v$ with $v\prec w$ and $v(n)\neq 1$. By  \cref{InductiveQSchu}, $\mathfrak{S}_w^q$ can also be written as exactly the same $\bbZ[q]$-linear combination of   $\mathfrak{S}_v^q$ on the level of polynomials. Hence, the statement follows immediately from    induction on the totally ordered subset $(\{w\}_{w\leq w_0s_{n-1}}, \prec)$.
\end{proof}


\begin{thebibliography}{99}

\bibitem[ALP92]{ALP} E. Akyildiz, A. Lascoux and P. Pragacz, {\em
Cohomology of Schubert subvarieties of $GL_n/P$},
J. Differential Geom. 35 (1992), no. 3, 511--519.

\bibitem[AS95]{AsSa} A. Astashkevich, V. Sadov, {\em Quantum cohomology of
partial flag manifolds $F_{n_1\cdots n_k}$}, Comm. Math. Phys. 170 (1995),
no. 3, 503--528.

\bibitem[BGG73]{BGG} I.N. Bernstein, I.M. Gelfand, and S.I. Gelfand, {\em Schubert cells, and the cohomology of the spaces  $G/P$},
    Uspehi Mat. Nauk 28 (1973), no. 3(171), 3--26.

 \bibitem[BB05]{BjBr}A. Bj\"orner, F. Brenti,\,{\it Combinatorics of Coxeter groups}, Graduate Texts in Mathematics, 231. Springer, New York, 2005.

\bibitem[Bo53]{Borel}A. Borel, {\em Sur la cohomologie des espaces fibr\'es principaux et des espaces homog$\grave{e}$nes des groupes de Lie compacts}, Ann. of Math.(2) 57(1953), 115--207.

\bibitem[BGP25]{BGP}S. C. Billey, Y. Gao, and B. Pawlowski,
{\em Introduction to the Cohomology of the Flag Variety}, arXiv: \href{https://arxiv.org/abs/2506.21064}{math.CO/2506.21064}, 2025.

  \bibitem[BCMP13]{BCMP}  A.S. Buch, P.-E. Chaput, L.C.  Mihalcea and N. Perrin,  {\em Finiteness of cominuscule quantum $K$-theory}, Ann. Sci. \'Ec. Norm. Sup\'er. (4) 46 (2013), no. 3, 477--494 (2013).

\bibitem[BM15]{BuMi} A.S. Buch and L.C.  Mihalcea, {\it Curve neighborhoods of Schubert varieties}, J. Differential Geom. 99 (2015), no. 2, 255--283.

\bibitem[BeFa97]{BeFa} K. Behrend and B. Fantechi, \emph{The intrinsic normal cone}, Invent. Math. 128 (1997), 45--88.

\bibitem[BM96]{BeMa} K.  Behrend,   Yu. Manin.
Stacks of stable maps and Gromov-Witten invariants.
Duke Math. J. 85 (1996), no. 1, 1--60.

\bibitem[BCK08]{BCK}    A. Bertram,  I. Ciocan-Fontanine and B. Kim, {\em Gromov-Witten invariants for abelian and nonabelian quotients}, J. Algebraic Geom. 17 (2008), no. 2, 275--294.

\bibitem[Chow22]{Chow22}C.H. Chow, {\it On D. Peterson's presentation of quantum cohomology of $G/P$},
arXiv: \href{https://arxiv.org/abs/2210.17382}{math.AG/2210.17382}, 2022.


\bibitem[CF95]{CF} I. Ciocan-Fontanine, {\it   Quantum cohomology of flag varieties},
Internat. Math. Res. Notices 1995, no. 6, 263--277.
\bibitem[CF99]{CF2} I. Ciocan-Fontanine, {\it On quantum cohomology rings of partial flag varieties},  Duke Math. J. 98 (1999), no. 3, 485--524.


 \bibitem[CG07]{CoGi}  T. Coates and  A. Givental, {\em  Quantum Riemann-Roch, Lefschetz and Serre}, Ann. of Math. (2) 165 (2007), no. 1, 15--53.

\bibitem[CK99]{CoKa}  D. A. Cox, and  S. Katz, {\em
Mirror symmetry and algebraic geometry},
Mathematical Surveys and Monographs, 68. American Mathematical Society, Providence, RI, 1999.

\bibitem[DMR07]{DMR} M. Develin, J. L. Martin and V. Reiner, {\em
Classification of Ding's Schubert varieties: finer rook equivalence},
Canad. J. Math. 59 (2007), no. 1, 36--62.

\bibitem[DY24]{DiYo} A. St. Dizier and A. Yong, {\em Presenting the cohomology of a Schubert variety: proof of the minimality conjecture},
J. Lond. Math. Soc. (2) 109 (2024), no. 1, Paper No. e12832, 22 pp.


\bibitem[FGP97]{FGP} S. Fomin, S. Gelfand  and A. Postnikov, {\it Quantum Schubert polynomials}, J. Amer. Math. Soc. 10(1997), 565--596.


\bibitem[FP97]{FuPa} W. Fulton and R. Pandharipande,\,\textit{Notes on stable maps
and quantum cohomology}, Proc. Sympos. Pure Math. 62, Part 2, Amer. Math.
Soc., Providence, RI, 1997.


\bibitem[FW04]{FuWo} W. Fulton, C. Woodward,\,\textit{On the quantum product of
Schubert classes}, J. Algebraic Geom. 13 (2004), no. 4, 641--661.

\bibitem[GI]{GaIr25} S. Galkin and  H. Iritani, {\em
Quantum ring and quantum Lefschetz}, in preparation; for preliminary notes,
see \url{https://www.math.kyoto-u.ac.jp/~iritani/quantum_ring_quantum_Lefschetz.pdf}.

\bibitem[GGI16]{GGI}  S. Galkin,  V.  Golyshev and  H. Iritani.
{\it Gamma classes and quantum cohomology of Fano manifolds: gamma conjectures}.
Duke Math. J. 165 (2016), no. 11, 2005--2077.


\bibitem[GLLX25]{GLLX}   S. Galkin, N. C. Leung, C. Li and R. Xiong, {\em A-D-E diagrams, Hodge-Tate hyperplane sections and semisimple quantum cohomology},
 arXiv: \href{https://arxiv.org/abs/2509.01101}{math.AG/2509.01101}, 2025.



\bibitem[GR02]{GaRe} V. Gasharov and V. Reiner, {\em
Cohomology of smooth Schubert varieties in partial flag manifolds},
J. London Math. Soc. (2) 66 (2002), no. 3, 550--562.


\bibitem[Gi96]{Giv}  A. Givental, {\em
Equivariant Gromov-Witten invariants},
Internat. Math. Res. Notices 1996, no. 13, 613--663.
\bibitem[Gi97]{Giv97} A.B. Givental, {\it Stationary phase integrals, quantum Toda lattices, flag manifolds and the mirror
conjecture}, in: Topics in Singularity Theory: V.I. Arnold's 60th Anniversary Collection, in: Amer. Math. Soc. Transl. (2), vol. 180, 1997, pp. 103--116.


\bibitem[GK95]{GiKi} A. Givental, B. Kim,\,\textit{Quantum cohomology of flag
manifolds and Toda lattices}, Comm. Math. Phys. 168 (1995), no. 3, 609--641.


\bibitem[HKLS24]{HKLS24} J. Hu, H.-Z. Ke, C. Li and L. Song, \emph{On the quantum cohomology of blow-ups of four-dimensional quadrics}, Acta Math. Sin. (Engl. Ser.) 40(2024), no.1, 313--328.


\bibitem[HKLS25]{HKLS}  J. Hu, H.-Z. Ke, C. Li and L. Song, {\em Mirror symmetry for certain blowups of Grassmannians}, arXiv: \href{https://arxiv.org/abs/2502.13545}{math.AG/2502.13545}, 2025.

\bibitem[HKLS26]{HKLS2} J. Hu,  H. Z. Ke, C. Li and Z. Su. {\it Gamma conjecture II via global Gamma-I}, preprint at arxiv: math.AG/2606.07418.

\bibitem[Ki96]{Kim} B. Kim, {\it On equivariant quantum cohomology},
Internat. Math. Res. Notices 1996, no. 17, 841--851.
\bibitem[Ki99a]{Kim2} B. Kim, {\it Quantum hyperplane section theorem for homogeneous spaces},  Acta Math. 183 (1999), no. 1, 71--99.
\bibitem[Ki99b]{Kim3} B. Kim, {\it
Quantum cohomology of flag manifolds  $G/B$  and quantum Toda lattices},
Ann. of Math. (2) 149 (1999), no. 1, 129--148.


\bibitem[LS82]{LascouxSchu} A. Lascoux and M. P. Sch\"utzenberger, {\em Polyn$\hat{o}$mes de Schubert}, C. R. Acad. Sci. Paris 294(1982), 447--450.



\bibitem[LOTRZ25]{LOTRZ} T. Le, S. Ouyang, L. Tao, J. Restivo and  A. Zhang, {\em Quantum bumpless pipe dreams}, Forum Math. Sigma 13 (2025), Paper No. e28, 21.



\bibitem[LL17]{LeLi} N.C. Leung and C. Li, {\em  An update of quantum cohomology of homogeneous varieties}, Adv. Lect. Math. (ALM), 37, International Press, Somerville, MA, 2017, 211--235.

\bibitem[LRY25]{LRY25} C. Li,  K. Rietsch and M. Yang, {\em An anticanonical perspective on $G/P$ Schubert varieties}, arXiv: \href{https://arxiv.org/abs/2506.18388}{math.AG/2506.18388}, 2025.

\bibitem[LRY]{LRY} C. Li,  K. Rietsch and M. Yang, {\em A Lie-theoretic mirror symmetry for Schubert varieties and an extended Peterson program}, in preparation.



\bibitem[MS19]{MiSh} L. Mihalcea and R. M. Shifler, {\em
Equivariant quantum cohomology of the odd symplectic Grassmannian},
Math. Z. 291 (2019), no. 3-4, 1569--1603.


\bibitem[Pec13]{Pec} C. Pech, {\em Quantum cohomology of the odd symplectic Grassmannian of lines},  J. Algebra 375 (2013), 188--215.

\bibitem[Pet97]{Pet} D. Peterson, {\it Quantum cohomology of $G/P$}, in: Lecture Course, Spring Term, M.I.T, 1997; notes by J. Lu and K. Rietsch.

\bibitem[RWY11]{RWY} V. Reiner, A. Woo and A. Yong, {\em Presenting the cohomology of a Schubert variety}, Trans. Amer. Math. Soc. 363 (2011), no. 1, 521--543.

\bibitem[Rie03]{Rie03} K. Rietsch, {\it Totally positive Toeplitz matrices and quantum cohomology of partial flag varieties}, J. Amer. Math. Soc. 16 (2003), no. 2, 363--392.

\bibitem[Rie08]{Rie08} K. Rietsch, {\it A mirror symmetric construction of $qH^*_T(G/P)_{(q)}$}, Adv. Math. 217 (2008), no. 6, 2401--2442.

\bibitem[RW24]{RiWi}  K. Rietsch and  L. Williams, {\em A superpotential for Grassmannian Schubert varieties}, arXiv: \href{https://arxiv.org/abs/2409.00734}{math.AG/2409.00734}, 2024.

\bibitem[ST97]{SiTi}  B. Siebert and  G.  Tian.
{\it On quantum cohomology rings of Fano manifolds and a formula of Vafa and Intriligator}.
Asian J. Math. 1 (1997), no. 4, 679--695.

 \bibitem[Str02]{Str} E. Strickland, {\it  Lines in $G/P$}, Math. Z., 242(2):227--240, 2002.

\end{thebibliography}
\end{document}